\newtheorem{theorem}{Theorem}[section]
\newtheorem{corollary}{Corollary}[section]
\newtheorem{example}{Example}[section]
\newtheorem{lemma}{Lemma}[section]
\newtheorem{proposition}{Proposition}[section]
\newtheorem{remark}{Remark}[section]
\newenvironment{proof}[1][Proof]{\textbf{#1.} }{\begin{flushright}\rule{0.5em}{0.5em}\end{flushright}}
\newcommand{\eop}{\vspace{-12pt}\begin{flushright}\rule{0.5em}{0.5em}\end{flushright}}
\numberwithin{equation}{section}
\def\X{{\cal X}}
\def\U{{\cal U}_n}
\def\V{{\cal V}_n}
\def\A{\mathbb{A}}
\def\e{\epsilon}
\def\v{{\rm Var}}
\def\S{{\cal S}_n}
\def\tZ{\tilde{Z}}
\begin{document}
\title{\large GENERAL APPROACH TO THE FLUCTUATIONS PROBLEM IN RANDOM SEQUENCE COMPARISON}
\author{J\"uri Lember\thanks{Supported by the Estonian Science Foundation
Grant nr.\ 9288 and targeted financing project SF0180015s12},
Heinrich Matzinger, Felipe Torres\thanks{corresponding author and research supported by the
DFG through the SFB 878 at University of M\"unster}}
\maketitle
\vspace{-0.5cm}
\hspace{0.6cm}\vbox{{\footnotesize
\noindent {\bf J\"uri Lember}, Tartu University, Institute of Mathematical Statistics\\
Liivi 2-513 50409, Tartu, Estonia. {\it E-mail:} jyril@ut.ee
\\
{\bf Heinrich Matzinger}, Georgia Tech, School of Mathematics\\
Atlanta, Georgia 30332-0160, U.S.A. {\it E-mail:} matzing@math.gatech.edu
\\
{\bf Felipe Torres}, M\"unster University, Institute for Mathematical Statistics \\
Einsteinstrasse 62, 48149 - M\"unster, Germany. {\it E-mail:} ftorrestapia@math.uni-muenster.de
}}

{\abstract
We present a general approach to the problem of determining the asymptotic order of the variance of the optimal score between two independent random sequences defined over an arbitrary finite alphabet. 
Our general approach is based on identifying random variables driving the fluctuations of the optimal score and conveniently choosing functions of them which exhibit certain monotonicity properties. We show how our general approach establishes a common theo\-re\-ti\-cal background for the techniques used by Matzinger {\it et al} in a series of previos articles \cite{BM,periodiclcs,HM,JJMM,goetze,FT1} studying the same problem in especial cases. Additionally, we explicitely apply our general approach to study the fluctuations of the optimal score between two random sequences over a finite alphabet (closing the study as initiated in \cite{goetze}) and of the length of the longest common subsequences between two random sequences with a certain block structure (generalizing part of \cite{FT1}).
}

\paragraph{Keywords.} {\it Random sequence comparison, longest common sequence, fluctuations,  Waterman conjecture.}

\paragraph{AMS.} 60K35, 41A25, 60C05

\section{Introduction}

\subsection{Sequence comparison setting}\label{sec:pre}
Throughout this paper  $X=(X_1,X_2,\ldots X_n)$ and
$Y=(Y_1,Y_2,\ldots Y_n)$ are two  random strings, usually referred
as sequences, so that every random variable $X_i$ and $Y_i$ take
values on a finite alphabet $\mathbb{A}$. We shall assume that the
sequences $X$ and $Y$ have the same distribution and are
independent. The sample space of $X$ and $Y$ will be denoted by
$\X_n$. Clearly $\X_n\subseteq \A^n$ but, depending on the model,
the inclusion can be strict.
\\
The problem of measuring the similarity of $X$ and $Y$ is central in
many areas of applications including computational molecular biology
\cite{christianini, Durbin, Pevzner, SmithWaterman,
watermanintrocompbio} and computational linguistics
\cite{YangLi,LinOch,Melamed1,Melamed2}. In this paper, we adopt the
same notation as in \cite{LMT}, namely we consider a general
scoring scheme, where $S:\mathbb{A}\times \mathbb{A}\rightarrow
\mathbb{R}^+$ is a {\it pairwise scoring function} that  assigns a
score to each couple of letters from $\mathbb{A}$.
An {\it alignment} is a pair $(\rho,\tau)$ where
$\rho=(\rho_1,\rho_2,\ldots,\rho_k)$ and
$\mu=(\tau_1,\tau_2,\ldots,\tau_k)$ are two increasing sequences of
natural numbers, i.e. $1\leq \rho_1<\rho_2<...<\rho_k\leq n$ and $1\leq
\tau_1<\tau_2<\ldots<\tau_k\leq n.$ The integer $k$ is the number of
aligned letters, $n-k$ is the number of {\it gaps} in the alignment.
Note that our definition of gap slightly differs from the one that
is commonly used in the sequence alignment literature, where a gap
consists of maximal number of consecutive {\it indels} (insertion
and deletion) in one side. Our gap actually corresponds to a pair of
indels, one in $X$-side and another in $Y$-side. Since we consider
the sequences of equal length, to every indel in $X$-side
corresponds an indel in $Y$-side, so considering them pairwise is
justified. In other words, the number of gaps in our sense is the
number of indels in one sequence. We also consider a {\it gap price}
$\delta$.  Given the pairwise scoring function $S$ and the gap price
$\delta$, the score of the alignment $(\pi,\mu)$ when aligning $X$
and $Y$ is defined by
$$U_{(\rho,\tau)}(X,Y):=\sum_{i=1}^kS(X_{\rho_i},Y_{\tau_i})+ \delta (n-k).$$
In our general scoring scheme $\delta$ can also be positive,
although usually $\delta\leq 0$ penalizing the mismatch. For
negative  $\delta$, the quantity  $-\delta$ is usually called the
{\it gap penalty}.  The optimal alignment score of $X$ and $Y$ is
defined to be
\begin{equation}\label{Ln}
L_n:=L(X,Y):=\max_{(\pi,\mu)}U_{(\rho,\tau)}(X,Y),
\end{equation}
where the maximum above is taken over all possible alignments. To simplify the notation, in what follows, we shall denote
$Z:=(X,Y)$ so that $L_n=L(Z)$.
\\
\\
\noindent
When $\delta=0$ and the  scoring function assigns one to every pair
of similar letters and zero to all other pairs, i.e.
\begin{equation}\label{LCS-scoring}
S(a,b)=\left\{
           \begin{array}{ll}
             1, & \hbox{if $a=b$;} \\
             0, & \hbox{if $a\ne b$.}
           \end{array}
         \right.
\end{equation}
then $L(Z)$ is just the maximal number of aligned letters -- the
length of the {\it longest common subsequence (abbreviated by LCS)}. The longest
common subsequence is probably the most common measure of global
similarity between strings.
\subsection{The variance problem: history and the state of art}
Since $X,Y$ are random string, the optimal score $L_n$ is a random
variable. In order to distinguish related pairs of strings from
unrelated ones, it is relevant to study the distribution of $L_n$ for independent
sequences. When $X$ and $Y$ are take from an ergodic
processes then, by Kingman's subbaditive ergodic theorem, there
exists a constant $\gamma$ such that
\begin{equation}\label{sub}
{L_n\over n}\to \gamma \text{ a.s.\ and in $L_1$, as $n \to \infty$}.
\end{equation}
In the case of LCS, namely when $S$ is taken as in \eqref{LCS-scoring}, the constant $\gamma$ is sometimes called the {\it
Chvatal-Sankoff constant} and its value, although well estimated
(see \cite{Baeza1999,Boutet, BMNWilson,Steele86, Deken,Paterson1,Paterson2,Lueker,martinezlcs,meancurve,kiwi}) remains unknown even for
as simple cases as i.i.d.\ Bernoulli sequences. The existence of
$\gamma$ was first noticed by Chvatal and Sankoff in their
pioneering paper \cite{Sankoff1}, where they proved that the limit
\begin{equation}\label{el}
\gamma:=\lim_{n\rightarrow\infty}\frac{E L_n}{n}
\end{equation}
 exists. In \cite{Alexander1} the rate of the convergence in
 (\ref{el}) was for the first time established, and in \cite{LMT} the authors improved the previous results introducing a new technique based on entropy and combinatorics, which gives a little more bout the path structure of the optimal alignments.
\paragraph{The fluctuations of $L_n$.} To make inferences on $L_n$, besides the convergence
(\ref{sub}),  the size of the variance $\v [L_n]$ is essential.
Unfortunately, not much is known about $\v [L_n]$ and its
asymptotic order is one  of the central open problems in string matching theory.\\
Monte-Carlo simulations lead Chvatal and Sankoff  in \cite{Sankoff1} to  conjecture that, in the case of LCS, $\v[L_n]=o(n^{2\over 3})$ for i.i.d.\ $\frac{1}{2}$-Bernoulli sequences. Using an Efron-Stein type of inequality,
Steele \cite{Steele86} proved that there exist a constant  $B<\infty $
such that $\v[L_n]\leq B n$. In \cite{Waterman-estimation}, and always in the LCS case, Waterman asks
whether this linear bound can be improved. His
simulations show that this is not the case and $\v[L_n]$ should grow
linearly. Still in the LCS case, Boutet de Monvel \cite{Boutet} interprets his simulations the same way. In a series of papers containing different settings, Matzinger {\it et al.}\ have been investigating the
asymptotic behavior of $\v [L_n]$. Their goal is to
find out whether there exists a constant $b>0$ (not depending on
$n$) such that $\v[L_n]\geq  bn$. Together with Steele's bound, this
means that $bn\leq \v[L_n] \leq Bn$, i.e. $\v[L_n]=\Theta(n)$ (we say that a
sequence $a_n$ is of order $\Theta(n)$ if, there exist constants
$0<b<B<\infty$ such that $bn\leq a_n\leq Bn$ for all $n$ large enough). So far, most
of the research to show that $\v[L_n]=\Theta(n)$ has been done in the case of LCS:

\begin{itemize}
\item In \cite{periodiclcs}, $X$ is a $\frac{1}{2}$-Bernoulli binary sequence and $Y$ is a non-random periodic binary sequence,
\item In \cite{BM}, $X$ is a $\frac{1}{2}$-Bernoulli binary sequence and $Y$ is an i.i.d. random sequence over a 3 -- symbols alphabet,
\item In \cite{HM}, both $X$ and $Y$ are $\frac{1}{2}$-Bernoulli binary sequences but they are aligned by using a score function which gives more weight when aligning ones than aligning zeros,
\item In \cite{JJMM}, both $X$ and $Y$ are i.i.d.\ binary sequences, but one symbol has much smaller probability than the other. That is a so called {\it case of low entropy}.
\item In \cite{FT1}, both $X$ and $Y$ are binary sequence having a multinomial block structure. That is, for the first time, a so called {\it case of high entropy}.
\end{itemize}

\noindent
Another related string matching problem is the so called {\it longest increasing subsequence (abbreviated by LIS) problem}. Given a sequence $X$, to find the LIS of $X$ is to find an increasing sequence of natural numbers $1 \le i_1 \le i_2 \le \cdots \le i_k \le n$ such that $X_{i_1} \le X_{i_2} \le \cdots \le X_{i_k}$. The LIS problem can be seen as a particular case of the LCS problem, in the following way: Let $X:=1\, 2 \cdots\, n$ be the sequence of the first $n$ increasing integers and let $\sigma(X):=\sigma(1)\sigma(2)\cdots \sigma(n)$ be the sequence of its random permutation. Then, a LIS of $X$ corresponds to a LCS of $X$ and $\sigma(X)$. Due to this equivalence, it was thought that the LIS and the LCS have fluctuations of the same order, which now we know it is not true. In this direction Houdre, Lember and Matzinger \cite{HLM} studied an hydrid problem, namely the fluctuations of $\ell_n$ defined as the length of the longest common increasing subsequence of $X$ and $Y$, where $X$ and $Y$ are i.i.d.\ $\frac{1}{2}$-Bernoulli binary sequences, and a longest common increasing subsequence of $X$ and $Y$ is just a LIS of $X$ and of $Y$ simultaneously. They showed that $n^{-{1/2}}(\ell_n-E\ell_n)$ converges in law to a functional of two Brownian motions, which implies that $\v[\ell_n]=\Theta(n)$ holds as well. There is also a connection between the LCS of two random sequences and a certain passage percolation problem \cite{Alexander1}.
\\
For the case of general scoring, to our best knowlodge, the only previous partial results on fluctuations are cointained in \cite{goetze}.
\subsection{Main results and the organization of the paper}
Recall that we aim to prove (or disprove) the order of the variance
$\v[L_n]=\Theta(n)$ and due to Steele's upper bound it suffices to
prove (or disprove) the existence of $b>0$ so that $\v[L_n]\geq bn$.
All available proofs (by Matzinger {\it et al.})\ of the existence of such $b$ follow more or less the
same philosophy and can be split into two parts, strategy that we call {\it two-step approach}. 
The first part of this approach is to find a random mapping independent of $Z$, usually called a {\it random
transformation},
$$\mathcal{R}: \X_n\times\X_n \to \X_n\times\X_n$$
such that, for most of the outcomes $z\in
\X_n\times\X_n$ of $Z$, increases the score at least by some fixed
amount $\e_o>0$. More precisely, the random transformation should be that 
for some $\alpha>0$ there exists a set $B_n\subset \X_n\times\X_n$ having
probability at least $1-\exp[-n^{\alpha}]$ so that, for every $z\in
B_n$, the expected score of $\mathcal{R}(z)$ exceeds the
score of $z$ by $\e_o$ (where the expectation is taken over the
randomness involved in the transformation), namely
$$E\big[L(\mathcal{R}(z))\big]\geq L(z)+\e_o.$$
Before stating this requirement formally, let us introduce a
useful notation: $\tZ:=\mathcal{R}(Z)$. Thus $\tZ$ is obtained from $Z$
by applying a random modification to $Z$ and the additional
randomness is independent of $Z$. Formally, the first step of the approach
is to find a random transformation so that for some universal
constants $\alpha>0$ and $\e_o>0$, the following inequality holds:
\begin{equation}\label{main}
P\big(E[L(\tZ)-L(Z)|Z]\geq \e_o\big)\geq 1-\exp[-n^{\alpha}].
\end{equation}
Besides (\ref{main}), the random transformation has to satisfy some
other requirements. This other requirements and their influence 
on the fluctuations of $L_n$ form the second step of the two-step approach. 
Roughly speaking, there should also exist an associated
function of  $Z$, let us call it $\mathfrak{u}(Z)$, so that applying the random
transformation to $Z$ increases the value of $\mathfrak{u}$. The variance of
$L_n$ can be then lower bounded by the variance of $U:=\mathfrak{u}(Z)$ so that
the constant $b$ exists if the variance of $\mathfrak{u}(Z)$ is linear on $n$.
 This second step is formally presented and explained with details in
Subsection \ref{rt}, where
 we also briefly introduce the random
transformation and the random variable $U$ used so far. Let us remark
that in earlier articles of the subject \cite{BM,HM,JJMM,FT1}, the random transformation is not explicitly defined, but the
variance driving  random variable $U$ is always there, and one can
easily define the random transformation as well. Let us also  mention that to show
(\ref{main}) for a suitable chosen transformation is not an easy
task and, typically, needs a lot of effort.
The second step of the approach consists of showing that (\ref{main})
implies the existence of $b$. This proof depends on the model,
on the chosen transformation and its components (vectors $U$ and $V$, see
Subsection \ref{rt}). One of the goals of the present paper is to
present a general setup and a general proof for the second step. With
such a general proof in hand, all the the future proofs of  the
existence of $b$ could be remarkably shortened and simplified. Our
general approach is based on the same strategy as in \cite{JJMM, FT1}, 
but remarkably shorter and simpler (see
also Remark 6 before the proof of Theorem \ref{gen2}). These general
results are Theorem \ref{gen1} and Theorem \ref{gen2}, both presented in
Section \ref{sec:ths}. 
\\
In order to see in action our two-step approach, 
we include two applications which bring us up two new fluctuation results:
\paragraph{First application: optimal score of i.i.d.\
sequences.} In Section \ref{sec:scor}, $X$ and $Y$  are
$\A$-valued i.i.d. random variables, being compared under the general scoring
scheme described in Subsection \ref{sec:pre}. In this case, the
random transformation consists of uniformly choosing a specific letter $a\in
\A$ and turning it into another specific
letter $b\in \A$. In \cite{goetze}, it has been proven that when the
gap price is relatively low and the scoring function $S$ satisfies
some mild asymmetry assumptions, then the described transformation
satisfies (\ref{main}), see Theorem \ref{app1} and the remarks after
it. Thus, for sufficiently low gap price $\delta$, the first step
holds true. In Section \ref{sec:scor}, we show that
all other assumptions of Theorem \ref{gen1} and Theorem \ref{gen2} are
fulfilled, so that the second step holds true and thus there exists the 
desired constant $b$  (Theorem \ref{main2}). Hence, Section \ref{sec:scor} completes the study
started in \cite{goetze} and, to our best knowledge, we obtain the first result where the order of variance
$\v[L_n]=\Theta(n)$ is proven in a setup other than LCS
of binary sequences. It is important to note that for the  second
step (Theorem \ref{main2}), no assumption on $\delta$ nor on the
scoring function $S$ are needed. Hence, whenever the assumptions in
the first step can be relaxed (i.e. generalization of Theorem \ref{app1}),
the second step still holds true and the order of variance
$\v[L_n]=\Theta(n)$   can be automatically deduced.
\paragraph{Second application: The length of the LCS of random i.i.d.\ block sequences.}
Unfortunately, the current assumption on the gap price $\delta$
makes Theorem \ref{app1} not applicable to the length of the LCS of two
independent i.i.d. sequences, thus this case (except the special model in
\cite{JJMM}) is still open. In order to approach this still open question from another
point of view, in Section \ref{sec:LCS} we consider $X$ and
$Y$ not to be i.i.d.\ sequences any more, but we keep $\A=\{0,1\}$ consisting of two colours
(i.e.\ the sequences are still binary) and the scoring function to be
(\ref{LCS-scoring}), also $\delta=0$. Hence, $L_n$ is the length of
the LCS. The difference from the setup considered in Section \ref{sec:scor} lies
in the random structure of $X$ and $Y$. Let us briefly explain the model.
Note that any binary sequence can be considered as a
concatenation of {\it blocks} with switching colors (from $0$ to $1$ or viceversa). Here a block is
merely a subword of the sequence having all letters of the same
color and a different color before and after it. Hence, every binary
sequence  is fully determined by the lengths of its blocks and the
color of its first block. Therefore, every  infinite i.i.d.\
Bernoulli sequence $X$ with parameter ${1\over 2}$ can be considered as
an i.i.d sequence of blocks whose lengths are geometrically distributed, where
the first block has colour either $0$ or $1$  with probability ${1\over 2}$.
$X$ can be, in a sense, approximated by a (binary) random
sequence $\hat{X}$ with finite range of possible block lengths.
 Indeed, the probability of finding a very long block in $X$ is very small,
hence such an approximation of $X$ by $\hat{X}$ is justified (note that although the
blocks remain to be i.i.d, $\hat{X}$ is not an
i.i.d.\ sequence any more). This is the situation in Section
\ref{sec:LCS}: instead of considering $X$ (and $Y$) as the first $n$
elements of an i.i.d.\ infinite Bernoulli sequence with parameter
${1\over 2}$, we take them as the first $n$ elements of an
infinite sequence $X_1,X_2,\ldots$ obtained by i.i.d.\ concatenating blocks of alternated colours
of random lengths distributed on $\{l-1,l,l+1\}$, where $l>2$ is a fixed integer. 
For a formal description of this block model see Subsection \ref{sec:block-model}. The
restriction that the block lengths can only have three possible
values is made in order to have a simplified exposition of the technique. We believe that
the results in Section \ref{sec:LCS} also hold for any finite range of
possible block lengths. \\
Considering such a block model is motivated by the following arguments.
First, it is a common practice in random sequence comparison to
approximate a target model (i.i.d. Bernoulli sequences, in our case)
by some  more tractable model. In random sequence comparison, the
more tractable model typically has lower entropy. Secondly, as it is
shown in \cite{FT1}, for the case where all three possible
block lengths have equal probability, there exists a random
transformation so that (\ref{main}) holds, see Lemma \ref{phdfirststep}.
The random transformation in this case -- let us call it the {\it
block-transformation} -- is the following: pick uniformly an
arbitrary block of $X$ with length $l-1$ and independently an arbitrary block of
$X$ with length $l+1$. Then, change
them both into blocks of length $l$. Thus, in this particular case
where all block lengths have equal probability, the first-step is
accomplished. In Section \ref{sec:LCS}, we show that the
block-transformation and corresponding random variables satisfy all
other assumptions of Theorem \ref{gen1} and Theorem \ref{gen2}, so that the
existence of $b$ follows (Theorem \ref{LCSthm}). Thus, for the case
of equiprobable  block lengths, the order of variance
$\v[L_n]=\Theta(n)$ has now been proved. Since the uniform distribution of
of block lengths was not used in the second step (see Theorem
\ref{LCSthm}), it follows that the same order of variance
automatically holds if an equivalent to Lemma \ref{phdfirststep} 
without the uniform distribution assumption can be shown. Again, we believe that such
a generalization is true.
\section{The two-step approach}\label{sec:ths}
\subsection{Preliminaries}
\begin{proposition}\label{prop} Let $N$ be an integer-valued random variable
taking values on interval $I$. Let $f: I\to \mathbb{R}$ be a
monotone function so that for a $c>0$,  $f(k)-f(k-1)\geq c$ (or
$f(k-1)-f(k)\geq c$) for every $k, k-1\in I$. Then $\v[f(N)]\geq
c^2\v[N]$.\end{proposition}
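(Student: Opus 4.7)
The plan is to exploit the standard identity $\mathrm{Var}[W] = \tfrac{1}{2}\,E[(W-W')^{2}]$, where $W'$ is an independent copy of $W$, which converts the variance inequality into a pointwise inequality that is easy to verify from monotonicity.

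First I would introduce an independent copy $N'$ of $N$, defined on a suitably enlarged probability space, and note that $(f(N), f(N'))$ is then an independent copy pair for $f(N)$. Applying the identity above to both $N$ and $f(N)$ gives
\begin{equation*}
\mathrm{Var}[f(N)] = \tfrac{1}{2}\,E\bigl[(f(N) - f(N'))^{2}\bigr], \qquad \mathrm{Var}[N] = \tfrac{1}{2}\,E\bigl[(N - N')^{2}\bigr].
\end{equation*}
So it suffices to show the pointwise bound $(f(n)-f(m))^{2} \geq c^{2}(n-m)^{2}$ for all $m,n \in I$.

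The pointwise bound is immediate from the hypothesis: assuming without loss of generality that $f$ is nondecreasing with $f(k)-f(k-1)\geq c$ (the nonincreasing case is symmetric), for any integers $m \leq n$ in $I$ we telescope
\begin{equation*}
f(n) - f(m) = \sum_{k=m+1}^{n} \bigl(f(k)-f(k-1)\bigr) \geq c\,(n-m) \geq 0,
\end{equation*}
so squaring yields $(f(n)-f(m))^{2} \geq c^{2}(n-m)^{2}$; by symmetry the same holds when $m > n$. Taking expectations and combining with the two identities above gives $\mathrm{Var}[f(N)] \geq c^{2}\,\mathrm{Var}[N]$.

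There is essentially no obstacle here; the only thing to be careful about is the case where $I$ may contain gaps or be infinite, but since $N$ takes integer values inside $I$ and $f$ is defined on $I$, the telescoping argument uses only pairs of values actually attained by $N$ and $N'$, for which the consecutive-difference hypothesis applies term by term. The argument is also robust to the monotonicity direction, since only the square of $f(n)-f(m)$ enters the variance identity.
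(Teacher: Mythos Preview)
Your proof is correct. The paper does not actually prove this proposition in-line; it simply refers the reader to \cite{BM}, so there is no direct comparison to make with the paper's own argument. Your route via the independent-copy identity $\mathrm{Var}[W]=\tfrac{1}{2}E[(W-W')^{2}]$ is clean and entirely standard, and the telescoping step is valid precisely because $I$ is stated to be an \emph{interval} of integers: if $m,n\in I$ with $m<n$, then every integer $k$ with $m<k\le n$ also lies in $I$, so the hypothesis $f(k)-f(k-1)\ge c$ applies term by term.

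One small remark on your closing paragraph: your caveat about possible ``gaps'' in $I$ is unnecessary (an interval has no gaps), and the justification you give there---that the telescoping ``uses only pairs of values actually attained by $N$ and $N'$''---is not quite right. The telescoping sum $\sum_{k=m+1}^{n}(f(k)-f(k-1))$ uses \emph{all} intermediate integers, not just attained values; it is the interval assumption on $I$, not anything about the support of $N$, that makes this legitimate. This does not affect the correctness of the proof, only the wording of that final sentence.
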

For a proof of this statement see \cite{BM}. The next corollary replaces the more involved Lemma 3.3 in \cite{BM} or Lemma 5.0.3 in \cite{FT1}. In our general approach, we need a simpler version because we use the decomposition \eqref{deco} (see Remark 6. after Theorem \ref{gen1}):
\begin{corollary}\label{popcor2}  Let $N$ be an integer-valued random variable
taking values on the set $\mathcal{Z}:=\{z_1,z_2,\ldots\}\subset \mathbb{Z}$,
. Let $k_o:=\sup_{i\geq 2}(z_i-z_{i-1})<\infty.$  Let $f$ be an
increasing function defined on $\mathcal{Z}$ so that for $\delta>0$ it holds
$$f(z_i)-f(z_{i-1})\geq \delta,\quad \forall i\geq 2.$$
Then
 $$\v[f(N)]\geq {\delta^2\over k_o^2}\v[N].$$
\end{corollary}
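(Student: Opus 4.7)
The plan is to reduce the statement to Proposition \ref{prop} by reparametrizing the state space of $N$. Since $N$ takes values in $\mathcal{Z}=\{z_1,z_2,\ldots\}$, I would introduce an auxiliary integer-valued random variable $M$ on an interval $I\subseteq\mathbb{N}$ defined by $M=i$ exactly when $N=z_i$. Setting $g(i):=f(z_i)$, the hypothesis $f(z_i)-f(z_{i-1})\geq\delta$ translates precisely into $g(i)-g(i-1)\geq\delta$ for consecutive integers $i,i-1\in I$, and $g$ is increasing because $f$ is. Proposition \ref{prop} applied to the pair $(M,g)$ with constant $c=\delta$ then gives
$$\v[f(N)]=\v[g(M)]\geq \delta^{2}\,\v[M].$$

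The remaining task is to relate $\v[M]$ back to $\v[N]$, and this is where the constant $k_o$ enters. Write $N=h(M)$ with $h(i):=z_i$. The bound $z_i-z_{i-1}\leq k_o$ together with a telescoping argument yields the Lipschitz estimate
$$|h(i)-h(j)|\leq k_o\,|i-j|\qquad\text{for all }i,j\in I.$$
Introducing an independent copy $M'$ of $M$ (so that $N'=h(M')$ is an independent copy of $N$) and using the standard identity $\v[W]=\tfrac{1}{2}E[(W-W')^{2}]$ for a random variable with an independent copy $W'$, I obtain
$$\v[N]=\tfrac{1}{2}E[(h(M)-h(M'))^{2}]\leq \tfrac{k_o^{2}}{2}E[(M-M')^{2}]=k_o^{2}\,\v[M].$$
Rearranging gives $\v[M]\geq \v[N]/k_o^{2}$, and combining with the previous inequality yields $\v[f(N)]\geq (\delta^{2}/k_o^{2})\,\v[N]$, as required.

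There is no real obstacle here; the whole argument is just the right bookkeeping. The conceptual content is that $f$ grows by at least $\delta$ between consecutive \emph{indices} of $\mathcal{Z}$, whereas the jumps of $N$ itself between those indices can be as large as $k_o$, and both effects enter quadratically through the variance. The Lipschitz-type contraction bound on variances, applied to $h$, is the one step that must be written out carefully; everything else is a direct invocation of Proposition \ref{prop}.
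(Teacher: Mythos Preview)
Your proof is correct and follows exactly the same route as the paper: reindex $N$ by $M$, apply Proposition~\ref{prop} to $(M,g)$ with $c=\delta$, and then use $\v[N]\leq k_o^{2}\,\v[M]$. The paper simply asserts this last inequality without justification; your independent-copy/Lipschitz argument for it is a clean way to fill in that detail.
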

\begin{proof} Let $M$ be a random variable taking values on the set
$I=\{1,2,\ldots\}$ defined as follows: $M=i$ iff $N=z_i$. Let $g$ be
an increasing function  on $I$ defined as follows: $g(i):=f(z_i)$.
Thus $g(i+1)-g(i)\geq \delta$ and $g(M)=f(N)$. By Proposition \ref{prop}
$$\v[f(N)]=\v[g(M)]\geq \delta^2 \v[M]\geq  \big({\delta \over
k_o}\big)^2 \v[N],$$ where the last inequality follows from the
inequality $\v[N]\leq k_o^2 \v[M].$\end{proof}
\begin{lemma}[Chebychev's inequality]
Let $U$ be a random variable, then for any constant $\zeta>0$ we have
\begin{equation}\label{chevi}
P\left(\,\,|U-\mathrm{E}[U]| \ge \zeta \sqrt{{\rm VAR}[U]}\,\,\right) \le \frac{1}{\zeta^2}.
\end{equation}
\end{lemma}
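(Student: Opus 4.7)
The plan is the textbook two-step derivation via Markov's inequality. First I would square inside the probability: since the event $\{|U-\mathrm{E}[U]|\ge \zeta\sqrt{\mathrm{VAR}[U]}\}$ coincides with $\{(U-\mathrm{E}[U])^2 \ge \zeta^2\,\mathrm{VAR}[U]\}$, it suffices to bound the probability of the latter. This reduces the problem to a tail bound for the non-negative random variable $Y := (U-\mathrm{E}[U])^2$, whose expectation is by definition $\mathrm{VAR}[U]$.

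Second, I would invoke (or quickly re-derive) Markov's inequality for $Y$: for any non-negative random variable $Y$ and any $a>0$,
\begin{equation*}
\mathrm{E}[Y] \;=\; \mathrm{E}[Y\,\mathbf{1}_{\{Y<a\}}] + \mathrm{E}[Y\,\mathbf{1}_{\{Y\ge a\}}] \;\ge\; a\,P(Y\ge a),
\end{equation*}
so that $P(Y\ge a)\le \mathrm{E}[Y]/a$. Applying this with $a=\zeta^2\,\mathrm{VAR}[U]$ and $\mathrm{E}[Y]=\mathrm{VAR}[U]$ produces exactly $P(Y\ge \zeta^2\mathrm{VAR}[U])\le 1/\zeta^2$, which is the desired bound.

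There is no real obstacle here; the only points to be mindful of are the degenerate cases. If $\mathrm{VAR}[U]=0$ then $U=\mathrm{E}[U]$ almost surely and the left-hand side is zero, so the inequality holds trivially (and one should read $\zeta\sqrt{\mathrm{VAR}[U]}=0$ with the convention that the event $\{|U-\mathrm{E}[U]|\ge 0\}$ has probability one only when $\zeta=0$, which is excluded). If $\mathrm{VAR}[U]=+\infty$ the bound is vacuous but still formally correct. Assuming $U$ has finite mean and variance, the proof goes through as sketched, and the statement follows directly.
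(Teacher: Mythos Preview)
Your argument is the standard textbook derivation and is correct. Note, however, that the paper does not actually supply its own proof of this lemma: Chebyshev's inequality (like H\"offding's inequality immediately after it) is simply quoted as a classical result and used later in the appendix, so there is nothing to compare your approach against.

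One small quibble: your treatment of the degenerate case $\mathrm{VAR}[U]=0$ is slightly off. In that case $\zeta\sqrt{\mathrm{VAR}[U]}=0$ and the event $\{|U-\mathrm{E}[U]|\ge 0\}$ has probability~$1$, not~$0$; so the inequality as written (with $\ge$) would in fact fail for $\zeta>1$. This is a well-known wrinkle in the statement rather than in your proof, and it is irrelevant for the paper's purposes since Chebyshev is only applied there to random variables with strictly positive variance. If you want a clean statement, either assume $\mathrm{VAR}[U]\in(0,\infty)$ or use a strict inequality on the left-hand side.
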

\begin{lemma} [H\"offding's inequality]
\label{hoeffdingco} Let $a>0$ be constant and $V_1,V_2,\dots$ be an
i.i.d sequence of bounded random variables such that:
\[ {\rm P}(|V_i-{\rm E}[V_i]| \le a)=1\]
for every $i=1,2,\dots$ Then for every $\Delta>0$, we have that:
\begin{equation}\label{corAH}
{\rm P}\left(\,\left|\frac{V_1+\dots+V_n}{n}-{\rm E}[V_1] \right|\ge \Delta\right) \le 2\exp\left( -\frac{\Delta^2}{2a^2}\cdot n\right)
\end{equation}
\end{lemma}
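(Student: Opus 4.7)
The plan is to establish the bound by a classical Chernoff (exponential Markov) argument together with what is now called Hoeffding's lemma. First, I would recenter: set $W_i := V_i - \mathrm{E}[V_i]$, so that the $W_i$ are i.i.d., centered, and bounded by $|W_i|\le a$ almost surely. Writing $S_n := W_1+\cdots+W_n$, the conclusion (\ref{corAH}) becomes $\mathrm{P}(|S_n|\ge n\Delta)\le 2\exp(-n\Delta^2/(2a^2))$, and by symmetry it suffices to bound $\mathrm{P}(S_n\ge n\Delta)$ and double it.

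Second, for any $\lambda>0$, Markov's inequality applied to $e^{\lambda S_n}$ gives
$$\mathrm{P}(S_n\ge n\Delta)\le e^{-\lambda n\Delta}\,\mathrm{E}[e^{\lambda S_n}]=e^{-\lambda n\Delta}\prod_{i=1}^n\mathrm{E}[e^{\lambda W_i}],$$
where the last equality uses independence. The task thus reduces to bounding the moment generating function of a single centered, bounded summand.

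Third, and this is the technical heart of the argument, I would prove Hoeffding's lemma: for a centered random variable $W$ with $|W|\le a$ a.s., one has $\mathrm{E}[e^{\lambda W}]\le e^{\lambda^2 a^2/2}$. The standard route is to exploit the convexity of $x\mapsto e^{\lambda x}$ on $[-a,a]$, writing $e^{\lambda x}\le \tfrac{a-x}{2a}e^{-\lambda a}+\tfrac{a+x}{2a}e^{\lambda a}$, taking expectations (the linear part vanishes because $\mathrm{E}[W]=0$), and obtaining $\mathrm{E}[e^{\lambda W}]\le\cosh(\lambda a)$. A direct Taylor series comparison then gives $\cosh(\lambda a)\le e^{\lambda^2 a^2/2}$. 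This convexity trick is the only nontrivial ingredient and is where I would expect to spend the most care; everything else is algebraic manipulation.

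Finally, inserting this bound into the Chernoff inequality yields
$$\mathrm{P}(S_n\ge n\Delta)\le \exp\!\left(-\lambda n\Delta+\tfrac{n\lambda^2 a^2}{2}\right),$$
and the right-hand side is minimized at $\lambda=\Delta/a^2$, producing $\exp(-n\Delta^2/(2a^2))$. Repeating the same argument for $-W_i$ bounds $\mathrm{P}(S_n\le -n\Delta)$ by the same quantity, and combining the two one-sided estimates gives the desired factor $2$ and completes the proof of (\ref{corAH}).
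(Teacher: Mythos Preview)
Your proof is correct and is exactly the standard Chernoff--Hoeffding argument: recenter, apply the exponential Markov inequality, bound the moment generating function via the convexity estimate $\mathrm{E}[e^{\lambda W}]\le\cosh(\lambda a)\le e^{\lambda^2 a^2/2}$, optimize in $\lambda$, and double for the two-sided bound. There is nothing to compare against, however, because the paper does not supply its own proof of this lemma; it simply states Hoeffding's inequality as a known preliminary result and moves on. Your write-up would serve perfectly well as the omitted justification.
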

The following lemma follows from the local central limit theorem (section 2.5 in \cite{Durret}):
\begin{lemma}\label{binomlclt}
Let $X\sim B(m,p)$ be a binomial   random variable  with parameters
$m$ and $p$. Then, for any constant $\beta
>0$, there exists $b(\beta)$ and $m_0(\beta)$ so that for every
$m>m_o$ and $$ i \in [mp-\beta \sqrt{m},mp+\beta \sqrt{m}]=:I_m,$$
it holds
\begin{align}\label{binom}
P(X=i)&={m \choose i }p^i(1-p)^{m-i}\geq {1\over b\sqrt{ m}}.
\end{align}
Moreover, there exists an universal constant $c_1(\beta)>0$ and
$m_1(\beta)$ so that for every $m>m_1$
\begin{equation}\label{binomvar}
\v [X|X\in I_m]\geq  c_1 m.
\end{equation}
\end{lemma}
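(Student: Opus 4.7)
The plan is to invoke the local central limit theorem for the binomial distribution and translate its statement into the two claims in a routine way. Recall that the local CLT for $X \sim B(m,p)$ asserts
$$\sup_{i} \left|\sqrt{2\pi m p(1-p)}\, P(X=i) - \exp\!\left(-\frac{(i-mp)^2}{2mp(1-p)}\right)\right| \to 0$$
as $m\to\infty$ (where the sup runs over integers in the support). Accepting this, both parts of the lemma reduce to elementary estimates.

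For inequality \eqref{binom}, I would fix $\beta>0$ and observe that for $i \in I_m$ one has
$$\frac{(i-mp)^2}{2mp(1-p)} \leq \frac{\beta^2}{2p(1-p)} =: C_\beta.$$
Hence $\exp(-(i-mp)^2/(2mp(1-p))) \geq e^{-C_\beta}$, and the local CLT approximation above then gives, for all $m$ large enough and uniformly in $i \in I_m$,
$$P(X=i) \geq \frac{e^{-C_\beta}}{2\sqrt{2\pi m p(1-p)}}.$$
Choosing $b(\beta) := 2 \sqrt{2\pi p(1-p)}\, e^{C_\beta}$ yields the required lower bound $P(X=i)\geq 1/(b\sqrt m)$ for $m \geq m_0(\beta)$.

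For \eqref{binomvar}, I would rescale and pass to the limit. Set $W_m := (X-mp)/\sqrt m$, so that the event $\{X \in I_m\}$ coincides with $\{|W_m|\leq \beta\}$ and $\v[X\mid X\in I_m] = m \v[W_m \mid |W_m|\leq \beta]$. By the ordinary CLT, $W_m/\sqrt{p(1-p)}$ converges weakly to a standard normal $Z$; combined with the local CLT this upgrades to convergence of the conditional moments (since the indicator $\mathbf 1_{[-\beta,\beta]}$ is bounded and the boundary points have zero limiting mass). Therefore
$$E[W_m\mid |W_m|\leq \beta]\ \longrightarrow\ \sqrt{p(1-p)}\, E\!\left[Z\,\Big|\,|Z|\leq \tfrac{\beta}{\sqrt{p(1-p)}}\right] = 0$$
by symmetry, and
$$E[W_m^2\mid |W_m|\leq \beta]\ \longrightarrow\ p(1-p)\, E\!\left[Z^2\,\Big|\,|Z|\leq \tfrac{\beta}{\sqrt{p(1-p)}}\right] =: 2c_1(\beta) > 0.$$
Consequently $\v[W_m\mid |W_m|\leq \beta]\to 2c_1>0$, and for $m \geq m_1(\beta)$ we obtain $\v[X\mid X\in I_m]\geq c_1 m$.

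There is no serious obstacle here: everything is a packaged consequence of the local CLT cited from \cite{Durret}. The only care needed is to ensure that the conditional-moment convergence is legitimate, which is standard since the conditioning event has probability bounded away from $0$ and the quantities being averaged are bounded on $I_m$ (after rescaling by $\sqrt m$). I would therefore present the argument as the two short computations above, citing the local CLT as a black box.
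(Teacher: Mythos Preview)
Your proposal is correct and matches the paper's approach exactly: the paper does not give a proof but simply states that the lemma ``follows from the local central limit theorem (section 2.5 in \cite{Durret})'', and your write-up is precisely the routine unpacking of that citation. The only minor remark is that for part~\eqref{binomvar} you do not actually need the \emph{local} CLT---ordinary weak convergence plus the fact that the Gaussian limit puts no mass at $\pm\beta$ already gives the convergence of the (bounded) conditional first and second moments---but this is a cosmetic point and your argument is sound as written.
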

Applying Lemma \ref{binom} repeatedly on marginals, we obtain a
multinomial  corollary:
\begin{corollary}\label{multiclt}
Let $(X,Y,Z)$ be a multinomial random vector with parameters $m$ and
$p_1$, $p_2$, $p_3$ such that $p_1+p_2+p_3=1$. Then, for any constant  $\beta
>0$, there exists $b(\beta)$ and $m_0(\beta)$ so that  for every $m>m_o$ and  $$\
(i,j)\in [mp_2-\beta \sqrt{m},mp_2+\beta \sqrt{m}]\times
[mp_1-\beta\sqrt{m},mp_1+\beta\sqrt{m}],$$ it holds
\begin{align}\label{multi}
P(X=i,Y=j)&={m \choose i \; j \; (m-i-j)}p_1^ip^j_2p_3^{m-i-j}\geq {1\over b m}.
\end{align}
\end{corollary}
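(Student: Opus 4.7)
\textbf{Proof plan for Corollary \ref{multiclt}.} The plan is to reduce the bivariate local estimate to two one-dimensional applications of Lemma \ref{binomlclt}, exploiting the chain rule
\begin{equation*}
P(X=i, Y=j) = P(X=i)\cdot P(Y=j \mid X=i).
\end{equation*}
Here the marginal $X$ is binomial with parameters $(m,p_1)$, while conditionally on $X=i$ the coordinate $Y$ is binomial with parameters $(m-i,\,p_2/(1-p_1))$. Each factor on the right is therefore a binomial mass function to which Lemma \ref{binomlclt} applies, provided the corresponding index sits inside a window of radius $O(\sqrt{m})$ around the respective binomial mean.

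First I would apply Lemma \ref{binomlclt} to $X$: under the hypothesis that $i$ lies within $\beta\sqrt{m}$ of $mp_1$, the lemma yields $P(X=i)\geq 1/(b_1\sqrt{m})$ for some $b_1=b_1(\beta)$ and all $m$ large enough. Then I would apply the same lemma to the conditional law of $Y$ given $X=i$; its mean is $(m-i)p_2/(1-p_1)$, which differs from $mp_2$ by at most $\beta p_2/(1-p_1)\,\sqrt{m}$ whenever $|i-mp_1|\le\beta\sqrt{m}$. Since $m-i\geq m(1-p_1)/2$ for $m$ large, the hypothesis $|j-mp_2|\le \beta\sqrt{m}$ can be rewritten as
\begin{equation*}
\bigl|\,j - (m-i)p_2/(1-p_1)\,\bigr|\le \beta'\sqrt{m-i}
\end{equation*}
for a new constant $\beta'=\beta'(\beta,p_1,p_2)$, which is exactly the shape needed to invoke the lemma a second time. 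This produces $P(Y=j\mid X=i)\geq 1/(b_2\sqrt{m-i})$ with $b_2=b_2(\beta')$.

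Multiplying the two bounds gives
\begin{equation*}
P(X=i,Y=j)\geq \frac{1}{b_1 b_2\sqrt{m}\,\sqrt{m-i}}\geq \frac{1}{b\,m},
\end{equation*}
where $b$ absorbs $b_1, b_2$ together with the factor $\sqrt{2/(1-p_1)}$ needed to replace $\sqrt{m-i}$ by a constant multiple of $\sqrt{m}$. The only non-mechanical step, and the one I would be most careful with, is the rescaling of the deviation window when passing to the conditional binomial: one must check that the shift of the conditional mean away from $mp_2$ remains of order $\sqrt{m}$ (not larger), which is exactly what the hypothesis $|i-mp_1|\le\beta\sqrt{m}$ guarantees. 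Everything else is bookkeeping, and the same scheme extends verbatim to multinomial vectors with more than three classes by iterating the conditioning.
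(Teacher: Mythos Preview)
Your proposal is correct and is precisely the argument the paper has in mind: the paper's proof is the single sentence ``Applying Lemma \ref{binomlclt} repeatedly on marginals, we obtain a multinomial corollary,'' and your chain-rule decomposition $P(X=i)\,P(Y=j\mid X=i)$ with two invocations of Lemma \ref{binomlclt} is exactly that. One cosmetic remark: in the final step you do not actually need the extra factor $\sqrt{2/(1-p_1)}$, since $m-i\le m$ already gives $1/\sqrt{m-i}\ge 1/\sqrt{m}$; the factor $\sqrt{2/(1-p_1)}$ is only used earlier, when converting $\sqrt{m}$ into $\sqrt{m-i}$ in the deviation window for the conditional binomial.
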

\subsection{General fluctuations results}
Let ${\cal X}_n$ be the sample space of $X$ and $Y$ so that ${\cal
X}_n\times {\cal X}_n$ is the sample space of $Z:=(X,Y)$. In the
following, we are considering the functions
$$\mathfrak{u}:\quad {\cal X}_n\times {\cal X}_n \to \mathbb{Z},\quad \mathfrak{v}:\quad {\cal X}_n\times {\cal X}_n \to
\mathbb{Z}^d$$ so that $U:=\mathfrak{u}(Z)$ (resp. $V:=\mathfrak{v}(Z)$ ) is an integer
values random variable (resp. vector). We shall denote by $\S$,
$\S^U$ and $\S^V$ the support of $(U,V)$, $U$ and $V$, respectively.
Hence $\S\subset \mathbb{Z}^{d+1}$, $\S^U\subset \mathbb{Z}$ and
$\S^V\subset \mathbb{Z}^{d}$. For every $v\in \S^V$, we define the
fiber of $\S^U$ as follows
$$\S(v):=\{u\in \S^U: (u,v)\in \S\}.$$
For any $(u,v)\in \S$, let
$$l(u,v):=E[L(Z)|U=u,V=v].$$
In what follows, we shall often consider the conditional
distribution of $U$ given that $V$ takes a particular value $v$.
Therefore, we shall denote by $U_{(v)}$  a random variable that has this conditional distribution, i.e. for any
$z\in \mathbb{Z}$, it holds
$$P(U_{(v)}=z)=P(U=z|V=v).$$
We shall also consider the sets of "typical values" of $V$ and
$U_{(v)}$. More precisely, we shall define the sets $\V\subset \S^V$
that contain (in some sense) the values of $V$ that are of our
interest. Similarly, for every $v\in \V$, we shall define the sets
$\U(v)$ that (again in some sense) contains the values of
$U_{(v)}$ that are of our interest. Roughly speaking, in what follows we shall always
condition on the events $\{V\in \V\}$ and $\{U_{(v)}\in \U(v)\}$.
\begin{theorem}\label{gen1} Assume the existence of sets $\V\subset
\S^V$ and $\U(v)\subset \S(v)$, for $v\in \V$, so that for some constants
$\delta>0$ and $k_o\in \mathbb{N}$, the following conditions hold:
\begin{description}
  \item[1)] For every $v\in \V$ and $u_1,u_2\in \U(v)$ such that $u_1<u_2$, it holds
\begin{equation}\label{main1}
l(u_2,v)-l(u_1,v)\geq \delta.
\end{equation}
  \item[2)] There exists $\psi_n$ so that for every $v\in \V$, the
following lower bound holds
\begin{equation}\label{phi}
\v[U_{(v)}|U_{(v)}\in \U(v)]\geq \psi_n.
\end{equation}
  \item[3)] There exists $k_o>\infty$ so that for every $v\in \V$
and $u_1\in \U(v)$, there exists an $u_2  \in \U(v)$ so that
$|u_1-u_2|\leq u_1+k_o$.
\end{description}
Then
\begin{equation}\label{claim}
\v[L(Z)]\geq \left({\delta\over k_o}\right)^2\cdot \psi_n \cdot \sum_{v\in \V}P(U_{(v)}\in \U(v))P(V=v).
\end{equation}
\end{theorem}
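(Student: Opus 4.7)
The strategy is iterated conditioning via the law of total variance, combined with the elementary bound
$\v[X] \geq P(A)\,\v[X \mid A]$ valid for any event $A$ (obtained by writing $\v[X] \geq E\bigl[\v[X \mid \mathbf{1}_A]\bigr] = P(A)\v[X\mid A] + P(A^c)\v[X\mid A^c]$). The plan is to peel off the variance in stages: first condition on $V$ and restrict to $v \in \V$; then condition on $U$ to project onto the conditional expectation $l(\cdot,v) = E[L(Z) \mid U=\cdot,\,V=v]$; finally restrict $U_{(v)}$ to the typical fiber $\U(v)$ and invoke Corollary \ref{popcor2} to transfer the variance from $l(U_{(v)},v)$ back to $U_{(v)}$.

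Concretely, the law of total variance gives
\begin{equation*}
\v[L(Z)] \geq E\bigl[\v[L(Z)\mid V]\bigr] \geq \sum_{v \in \V} P(V=v)\, \v[L(Z) \mid V=v],
\end{equation*}
where we simply dropped the terms with $v \notin \V$. For each fixed $v \in \V$, a further application of total variance, now conditioned on $\{V=v\}$, together with the definition of $l$, yields
\begin{equation*}
\v[L(Z)\mid V=v] \geq \v\bigl[E[L(Z) \mid U,V] \bigm| V=v\bigr] = \v\bigl[l(U_{(v)},v)\bigr].
\end{equation*}
A third application of the elementary bound, this time to the event $\{U_{(v)} \in \U(v)\}$, produces
\begin{equation*}
\v\bigl[l(U_{(v)},v)\bigr] \geq P\bigl(U_{(v)} \in \U(v)\bigr)\cdot \v\bigl[l(U_{(v)},v) \bigm| U_{(v)} \in \U(v)\bigr].
\end{equation*}

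It remains to bound the last factor. Conditionally on $\{U_{(v)} \in \U(v)\}$, the random variable $U_{(v)}$ is integer-valued and supported in $\U(v)$; condition 3 ensures that consecutive elements of $\U(v)$ differ by at most $k_o$, and condition 1 says that $f(u) := l(u,v)$ is increasing on $\U(v)$ with jumps of at least $\delta$ between consecutive points. These are exactly the hypotheses of Corollary \ref{popcor2}, which gives
\begin{equation*}
\v\bigl[l(U_{(v)},v) \bigm| U_{(v)} \in \U(v)\bigr] \geq \left(\frac{\delta}{k_o}\right)^{\!2} \v\bigl[U_{(v)} \bigm| U_{(v)} \in \U(v)\bigr] \geq \left(\frac{\delta}{k_o}\right)^{\!2}\psi_n,
\end{equation*}
the last step being condition 2. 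Chaining the four inequalities and summing over $v \in \V$ yields precisely (\ref{claim}).

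The argument is essentially a clean bookkeeping of conditional-variance inequalities; I foresee no genuine obstacle. The only mildly delicate bit is verifying that Corollary \ref{popcor2} applies with the \emph{same} constants $\delta$ and $k_o$ as in conditions 1 and 3, which is exactly what these conditions are tailored to guarantee; the remainder is a matter of correctly threading the three conditioning steps and discarding the right terms at each stage.
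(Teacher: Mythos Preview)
Your proof is correct and follows essentially the same route as the paper's: iterated total-variance conditioning to reduce to $\sum_{v\in\V}\v[l(U_{(v)},v)]P(V=v)$, then restriction to $\{U_{(v)}\in\U(v)\}$ and an application of Corollary~\ref{popcor2}. The only cosmetic difference is the order of the first two conditioning steps---the paper conditions on $(U,V)$ first and then on $V$, whereas you condition on $V$ first and then on $U$---but both paths land on the same intermediate quantity.
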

\begin{proof}
It is clear that
\begin{equation}\label{deco}
\v[L(Z)]=E\big(\v[L(Z)|U,V]\big)+\v\big(E[L(Z)|U,V]\big)\geq
\v\big(l(U,V)\big). \end{equation}
 We aim to bound $\v\big(l(U,V)\big)$ from below.
We condition on $V$ and use the same formula to get
\begin{align}\nonumber
\v\big(l(U,V)\big)&=E\big(\v[l(U,V)|V]\big)+\v\big(E[l(U,V)|V]\big)\geq
E\big(\v[l(U,V)|V]\big)\\
\nonumber
&=\sum_{v\in \S^V}\v[l(U,v)|V=v]P(V=v)\geq \sum_{v\in \V}\v[l(U,v)|V=v]P(V=v)
\\\label{summ}
&=\sum_{v\in \V}\v[l(U_{(v)},v)]P(V=v).
\end{align}
Conditioning on the event $\{U_{(v)}\in \U\}$, we see that
$$\v[l(U_{(v)},v)]\geq \v[l(U_{(v)},v)|U_{(v)}\in \U(v)]P(U_{(v)}\in \U).$$
By assumption {\bf 1)}, on the set $\U(v)$ the function $l$
satisfies (\ref{main1}). By assumption {\bf 3)}, the two consecutive
atoms of $\S(v)\cap \U$ are at most $k_o$ apart from each other. By
Corollary \ref{popcor2}, thus
\begin{equation}\label{oi2}
\v[l(U_{(v)},v)|U_{(v)}\in \U]\geq {\delta^2\over k_o^2} \cdot
\v[U_{(v)}|U_{(v)}\in \U].\end{equation} Thus (\ref{summ}) can lower bounded by
\begin{align*}
\sum_{v\in \V}\v[l(U_{(v)},v)]P(V=v)&\geq
\left({\delta\over k_o}\right)^2 \cdot \sum_{v\in \V} \v[U_{(v)}|U_{(v)}\in \U(v)]P\big(U_{(v)}\in \U(v)\big)P(V=v)\\
&\geq \left({\delta\over k_o}\right)^2 \cdot \psi_n \cdot \sum_{v\in \V}  P\big(U_{(v)}\in \U(v)\big)P(V=v).
\end{align*}
\end{proof}
\paragraph{Remarks:} 
\begin{enumerate}
                 \item The theorem above is non-asymptotic. It means
that $n$ is fixed and, therefore, could be removed from the
statement. However, writing the theorem in such a way, we try to
stress out that $\delta$ and $k_o$ should be independent of $n$ when
applying the theorem. Obviously $X,Y,Z,U,V$ will depend on $n$ too,
but we do not explicitely include that in the notation.
                  \item In order to get a linear lower bound from (\ref{claim}),
it suffices to show that for some constant $b>0$ it holds,
$$\psi_n \cdot \sum_{v\in \V}P(U_{(v)}\in \U(v))P(V=v)\geq b \,n$$
Typically $\psi_n$ is linear on $n$ so that for a constant $d>0$ we will have
$\psi_n\geq dn$, and the sets $\U(v)$ and $\V$ are such that for
constants $d_1$ and $d_2$ it holds,
\begin{equation}\label{dd1}
P(V\in \V)\geq {d_1},\quad P(U_{(v)}\in \U(v))\geq {d_2},\quad
\forall v\in \V.
\end{equation}
Then the right side of (\ref{claim}) has a linear lower bound as
desired:
$$
\psi_n \cdot \sum_{v\in \V}P(U_{(v)}\in \U(v))P(V=v)\geq (d_1\, d_2\, d\,)\, n.$$
                 \item The most crucial assumption of Theorem \ref{gen1} is
assumption {\bf 1)}. It states that the function $u\mapsto l(u,v)$ increases at
least by certain amount $\delta$ on the set where $U$ and $V$ take
their typical values. The core of the approach is to find $U$ and $V$ such that (\ref{main1})
holds. Later in concrete settings, we shall see how
(\ref{main1}) is achieved in practice.
               \item The condition {\bf 2)} barely states the
existence of an uniform lower bound for the
conditional variance (i.e. independent of $v$). Some trivial bounds clearly exist, but as
explained above, $\psi_n$ has to grow linearly in order to get a linear lower bound for $\v[L_n]$.
               \item The condition {\bf 3)} is of technical nature.
In particular, it holds if $\U(v)$ is a lattice of span $k_o$, i.e.
for  integers $m$ and $u_o$
\begin{equation}\label{lattice}
\U(v)=\{u_o+k_o,u_o+2k_o,\ldots,u_o+ m k_o\}.
\end{equation}
 As we shall see, this is a
typical situation in practice.
\item The proof is based on the decomposition (\ref{deco}). In all
previous papers, the lower bound of $\v[L(Z)]$ was achieved by
bounding below the (expectation) of conditional variance
$\v[L(Z)|U,V]$ (\cite{periodiclcs,BM,HM,JJMM,FT1}). This approach often involves martingale's arguments
(via H\"offding-Azuma inequality), non-trivial combinatorial estimates and a generalization of
Proposition \ref{prop}. In this paper, however, we bound the
variance of conditional expectation $\v\big(E[L(Z)|U,V]\big)$.
Although the main idea remains the same, the proof is now much
shorter and less technical, relying solely on Proposition
\ref{prop}.
\end{enumerate}
\begin{corollary}\label{cor1} Let, for any $v\in \V$, the set $\U(v)$ be defined as $\U(v)=\U\cap
\S(v)$, where $\U\subset \mathbb{R}$ is a subset independent of $v$.
If the assumptions of Theorem \ref{gen1} are satisfied, then
\begin{equation}\label{claimU}
\v[L(Z)]\geq \left({\delta\over k_o}\right)^2\cdot \psi_n \cdot P(U\in
\U,V\in \V).
\end{equation}
\end{corollary}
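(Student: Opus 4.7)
The plan is to read the conclusion off directly from Theorem \ref{gen1}. That theorem already gives
$$\v[L(Z)]\geq \left(\frac{\delta}{k_o}\right)^2\cdot \psi_n \cdot \sum_{v\in \V}P(U_{(v)}\in \U(v))\,P(V=v),$$
so the only thing left to do is simplify the sum on the right under the extra assumption $\U(v)=\U\cap\S(v)$.

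First I would observe that the conditional random variable $U_{(v)}$ is supported on $\S(v)$ by construction, because $(U,V)\in\S$ forces $U\in\S(v)$ whenever $V=v$. Hence
$$P\bigl(U_{(v)}\in \U(v)\bigr)=P\bigl(U_{(v)}\in \U\cap \S(v)\bigr)=P\bigl(U_{(v)}\in \U\bigr),$$
and by the very definition of $U_{(v)}$ this last probability equals $P(U\in\U\mid V=v)$.

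Next I would plug this identity back into the sum and use the tower property / disintegration of the joint law of $(U,V)$:
\begin{align*}
\sum_{v\in \V}P\bigl(U_{(v)}\in \U(v)\bigr)P(V=v)
&=\sum_{v\in \V}P(U\in \U\mid V=v)\,P(V=v)\\
&=\sum_{v\in \V}P(U\in \U,\,V=v)\\
&=P(U\in \U,\,V\in \V),
\end{align*}
which is exactly the bound claimed in \eqref{claimU}. There is essentially no obstacle here; the statement is a direct rewriting of Theorem \ref{gen1} in the product form that will be most convenient for the applications in Sections \ref{sec:scor} and \ref{sec:LCS}, where $\U$ will typically be chosen as a deterministic lattice of span $k_o$ independent of the conditioning value $v$.
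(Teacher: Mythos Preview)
Your proof is correct and follows essentially the same route as the paper: start from the bound in Theorem \ref{gen1}, use that $U_{(v)}$ is supported on $\S(v)$ to replace $\U(v)$ by $\U$, and then collapse the sum $\sum_{v\in\V}P(U\in\U\mid V=v)P(V=v)$ into $P(U\in\U,\,V\in\V)$. The only difference is that you spell out explicitly why intersecting with $\S(v)$ is harmless, which the paper leaves implicit.
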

\begin{proof}
The (\ref{claimU}) follows from (\ref{claim}):
\begin{align*}
\sum_{v\in \V}  P\big(U_{(v)}\in \U(v)\big)P(V=v)  & = \sum_{v\in
\V} P\big(U_{(v)}\in \U \big)P(V=v) =\sum_{v\in \V} P\big(U\in \U
|V=v  \big)P(V=v)\\
&=\sum_{v\in \V}  P\big(U\in \U, V=v)=P(U\in \U,V\in \V).
\end{align*}
\end{proof}
\subsection{Random transformation and the condition (\ref{main1})}\label{rt} 
In order to simplify the notation, we consider the case where $\U$ is
an integer interval and that, for any $v\in \V$, the fiber $\S(v)$ is a
lattice with span $k_o\geq 1$. Thus, for every $v\in \V$ there
exists an integer $m$ (depending on $n$ and $v$) so that $\S(v)\cap
\U$ is as in (\ref{lattice}). As explained in Remark 5, in this case
the condition {\bf 3)} of Theorem \ref{gen1} is fulfilled.\\\\
For any $(u,v)\in \S$, let $P_{(u,v)}$ denote the law of of $Z$
given $U=u$ and $V=v$. Thus
$$P_{(u,v)}(z)=P(Z=z|U=u,V=v).$$
Recall from the introduction that the core of the whole two-step
approach is the existence of a random transformation
$\mathcal{R}: \X_n\times\X_n \to \X_n\times\X_n$ independent of $Z$ that
satisfies (\ref{main}). In order to make this approach to work, the
transformation should be associated with the $U$ and $V$ in the
following way: for a typical  $z\in \X_n\times\X_n$, the
transformation increases $\mathfrak{u}(z)$ by $k_o$ unit and leaves $\mathfrak{v}(z)$
unchanged. Typically there are many such (random or non-random)
mappings, but to ensure (\ref{main1}), the transformation should be
chosen so that some additional assumptions are fulfilled. Recall
that $\tZ$ is obtained from $Z$ by applying a random modification to
$Z$ and the additional randomness is independent of $Z$. As
mentioned above, the transformation increases $U=\mathfrak{u}(Z)$ by $k_o$ and
leaves $V=\mathfrak{v}(Z)$ unchanged, thus (at least for  the typical values of
$Z$), it holds
\begin{equation}\label{shift}
\mathfrak{u}(\tZ)=\mathfrak{u}(Z)+k_o,\quad \mathfrak{v}(\tZ)=\mathfrak{v}(Z).
\end{equation}
In addition, we need the following assumptions to be true:
\begin{description}
  \item[A1] There exist universal (not depending on $n$) constants  $\alpha>0$ and $\e_o>0$ such that
\begin{equation*}
P\big(E[L(\tZ)-L(Z)|Z]\geq \e_o\big)\geq 1-\exp[-n^{\alpha}].
\end{equation*}
  \item[A2] There exists universal constant  $A<\infty$ so that $L(\tZ)-L(Z)\geq -A$.
  \item[A3] For any $(u,v)$ such that $v\in \V$ and  $u\in \U(v)$,   the following implication holds:
\begin{equation}\label{impli}
\text{If  }Z\sim P_{(u,v)},\text{   then   }\tZ \sim P_{(u+k_o,v)}.
\end{equation}
\end{description}
{\bf Remarks:} \begin{enumerate}
                 \item The assumption {\bf A1} is the condition
(\ref{main}) explained already in  Introduction.
                 \item The assumption {\bf A2} states that by applying the random transformation, the
maximum decrease of the score is at most $A$. This assumption
usually holds for trivial reasons.
                 \item Note that (\ref{impli}) implies
(\ref{shift}).  However, the condition (\ref{impli}) is more
restrictive and (except some trivial cases) to achieve it, the
transformation $R$ has to be random.
\item If $\U(v)=\U\cap \S(v)$, then $v\in \V$ and $u\in \U(v)$ holds
if and only if $(u,v)\in \S\cap (\U \times \V)$.
               \end{enumerate}
\begin{theorem}\label{gen2} Assume the existence of a random transformation so
that for every $n$, {\bf A1}, {\bf A2} and {\bf A3} hold. Suppose
that there exists a constant $a>0$ so that for any $(u,v)$ such that
$v\in \V$ and $u\in \U(v)$, it holds
\begin{equation}\label{LCLT}
P(U=u,V=v)\geq {1\over a n}.
\end{equation}
Then there exists a $n_5<\infty$ so that for every $n>n_5$ the
assumption {\bf 1)} of Theorem \ref{gen1} is fulfilled with
$\delta={\e_o\over 2}$.\end{theorem}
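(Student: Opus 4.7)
The plan is to reduce condition \textbf{1)} of Theorem \ref{gen1} to the case of consecutive atoms of $\U(v)$ and then use assumptions \textbf{A1}--\textbf{A3} together with (\ref{LCLT}) to turn the ``typical'' score gain $\e_o$ into a conditional gain of at least $\e_o/2$. Because the fiber $\S(v)$ is a lattice of span $k_o$ and $\U$ is an integer interval, $\U(v)$ is of the form $\{u_o+jk_o : 1\le j\le m\}$; any $u_1<u_2$ in $\U(v)$ is then joined by a chain of consecutive lattice points, so it suffices to prove $l(u+k_o,v)-l(u,v)\ge \e_o/2$ whenever $u,u+k_o\in \U(v)$, and (\ref{main1}) for non-consecutive pairs follows by telescoping non-negative increments.

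Next, I would use \textbf{A3} to translate the consecutive statement into one about the random transformation. Conditionally on $\{U=u,V=v\}$, the pair $\tZ$ has law $P_{(u+k_o,v)}$; since $U,V$ are functions of $Z$ and the randomness in $\mathcal{R}$ is independent of $Z$, the tower property gives
\[
l(u+k_o,v)-l(u,v) \;=\; E\bigl[L(\tZ)-L(Z) \,\bigm|\, U=u,V=v\bigr].
\]
Write $D:=L(\tZ)-L(Z)$ and introduce the good event $G:=\{E[D\mid Z]\ge \e_o\}$. Then \textbf{A1} says $P(G^c)\le \exp[-n^\alpha]$, while \textbf{A2} forces $E[D\mid Z]\ge -A$ almost surely.

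The decisive quantitative step is a good--event truncation. On $G$ we have $E[D\mid Z]\ge \e_o$, and on $G^c$ only the trivial bound $\ge -A$; taking conditional expectation given $\{U=u,V=v\}$ yields
\[
E[D\mid U=u,V=v] \;\ge\; \e_o\, P(G\mid U=u,V=v)-A\, P(G^c\mid U=u,V=v).
\]
I would dispose of the bad term by the obvious estimate $P(G^c\mid U=u,V=v)\le P(G^c)/P(U=u,V=v)$ combined with (\ref{LCLT}), which yields the uniform upper bound $an\exp[-n^\alpha]$. Choosing $n_5$ so large that $(A+\e_o)\,an\exp[-n^\alpha]\le \e_o/2$ for all $n>n_5$ concludes the argument with $\delta=\e_o/2$.

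The only delicate point, and arguably the main obstacle, is the interplay between the two error scales: (\ref{LCLT}) provides only a $1/(an)$ lower bound on the atom probability, yet the failure probability in \textbf{A1} is super-polynomially small, namely $\exp[-n^\alpha]$; the argument works precisely because the latter is comfortably dominated by the former. Everything else is bookkeeping: the lattice structure for reducing to consecutive atoms, the tower property for transferring $l$ to $\tZ$, and a single truncation to combine \textbf{A1} with \textbf{A2}.
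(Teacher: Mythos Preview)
Your proposal is correct and follows essentially the same route as the paper: use \textbf{A3} to rewrite $l(u+k_o,v)-l(u,v)$ as $E[L(\tZ)-L(Z)\mid U=u,V=v]$, truncate on the good event from \textbf{A1}, bound the bad part via \textbf{A2} and the ratio $P(G^c)/P(U=u,V=v)\le an\exp[-n^\alpha]$ using (\ref{LCLT}), and then pick $n_5$ large enough. The only addition you make is the explicit telescoping reduction to consecutive lattice points $u,u+k_o$; the paper's proof establishes the increment bound only for consecutive atoms and tacitly identifies this with condition \textbf{1)} (which is how it is actually used in Theorem~\ref{gen1} via Corollary~\ref{popcor2}), so your extra sentence is a harmless clarification rather than a different idea.
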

\begin{proof} Let $v \in \V$ and $u\in \U(v)$. Let $Z_{(u,v)}$ be a
random vector having the distribution $P_{(u,v)}$. By {\bf A3},
thus,
$$l(u+k_o,v)=E[L(\tZ_{(u,v)})].$$
Hence
\begin{align*}
l(u+k_o,v)-l(u,v)&=E[L(\tilde{Z}_{(u,v)})]-E[L({Z}_{(u,v)})]=E[L(\tilde{Z}_{(u,v)})-L({Z}_{(u,v)})]\\
&=E\big(E[L(\tilde{Z}_{(u,v)})-L({Z}_{(u,v)})\big|Z_{(u,v)}]\big).\end{align*}
Let $B_n \subset \X_n\times \X_n$ be the set of outcomes of $Z$ such
that
$$\big\{E[L(\tZ)-L(Z)|Z]\geq \epsilon_o\}=\{Z\in
B_n\}.$$ By assumption {\bf A2}, for any pair of sequences $z$, the
worst decrease of the score, when applying the block-transformation
is $- A$. Hence
\begin{equation*}
E\big(E[L(\tZ_{(u,v)})-L({Z}_{(u,v)})\big|{Z}_{(u,v)}]\big)\geq \e
P\big({Z}_{(u,v)} \in B_n(\e)\big)-A P({Z}_{(u,v)}\not \in
B_n(\e)).\end{equation*} By {\bf A1},  $P\big(Z\not \in B_n\big)\leq
\exp[-n^{\alpha}]$. Therefore
$$P\big({Z}_{(u,v)}\not\in B_n\big)=P\big(Z\not \in
B_n|U=u,V=v\big)\leq {P\big(Z\not \in B_n\big)\over P(U= u,V=v)}\leq
a n \exp[-n^{\alpha}]$$ where the last inequality follows from
(\ref{LCLT}). Take now $ n_5$ so big that for any $n>n_5$, we have
$$\e_o \big(1- {a n  }\exp[-n^{\alpha}] \big)-A{ a n
}\exp[-n^{\alpha}]>{\e_o \over 2}.$$ Hence, for any $n>n_5$ and for
any $(u,v)$ such that $v\in \V$ and $u\in  \U(v)$, we have
\begin{align} l(u+k_o,v)-l(u,v)\geq \e_o\big(1- an \exp[-n^{\alpha}]\big)-Aan \exp[-n^{\alpha}]\geq {\e_o\over
2}.\end{align}\end{proof}

\subsection{Covered previous results}
Before turning into new results presented in the
subsequent sections, let us briefly mention how the random
transformation as well as the associated random variables were defined
in already obtained results:
\begin{itemize}
 \item In \cite{periodiclcs}, the random variable $U$ is the number of
matching replica points while $V$ is a constant. Roughly speaking, a letter $X_i$ is a
replica point if it has a neighborhood that matches exactly with the
periodic sequence (i.e.\ it has the same periodic pattern). The replica point
itself can or cannot match, and it is shown that the number of
matching replica points has variance proportional to $n$. In
\cite{periodiclcs} the random transformation is not explicitly defined,
but one can take it as uniformly choosing a replica point with
prescribed color and change its value.

\item In \cite{BM} the random sequence $X$ is built up on the alphabet $\{0,1,a\}$. The random variable $U$ is the number of $a$'s in $X$, while $V$ is a constant. The random transformation is hidden in the so called {\it drop-scheme of random bits}, used to construct the sequence $X^{01}$ which is the subsequence of $X$ only having $0$'s and $1$'s. Roughly speaking, the drop-scheme of random bits consists on, starting from a binary random sequence of length two, to flip a coin and to add the resulting symbol into the previous sequence at an uniformly chosen location, so increasing the length, until reaching a length $n-U$.

\item In \cite{HM} the scoring function is such that $S(1,0)=S(0,1)=0, S(0,0)=1$ and $S(1,1) \in \mathbb{R}$. The random transformation consists, in $X$, to uniformly choose a block of length five, to take one of its symbols out and to add it to a uniformly chosen block of length one. The random variable $U$ is the number of blocks of length two and of length four, and $V$ is the number of blocks of the other lengths.

 \item In \cite{JJMM}, both sequences typically consist of
many zeros and few ones. The random transformation, uniformly at random picks an
arbitrary one in $Z$ and turns it into a zero. The variable $U$ is the number of ones in $Z$, $V$ is a
constant. Hence, this case is essentially the same as considered in
the Section \ref{sec:scor} with $|\A|=2$ and Theorem \ref{main2} nicely
generalizes Theorem 2.1 in \cite{JJMM}.

\item In \cite{FT1}, the random transformation and the random variables $(U,V)$ are defined to be as in Section \ref{sec:LCS}.

\end{itemize}
\section{Optimal score of random i.i.d.\ sequences}\label{sec:scor}
Our first application deals with the general scoring scheme as
introduced in Section \ref{sec:pre}. Thus let $\A$ be a finite alphabet and $X$,
$Y$ be independent i.i.d. sequences so that any letter has positive
probability, i.e.
$$P(c):=P(X_1=c)>0,\quad \forall c\in \A.$$
Clearly now $\X_n=\A^n$. Let $S: \A \times \A \to \mathbb{R}^+$ be a scoring function. Let
$A<\infty$ be the maximal value of the scoring function, i.e.
$\max_{a,b}S(a,b)\leq A$. We naturally assume that the gap price
does not exceed $A$, i.e. $\delta\leq A$. Now, it is easy but
important to observe that changing one letter in the sequence $X$,
say $X_1$, decreases the score at most by $A$ units. Indeed, if
$X_1$ was not included any optimal alignment, then changing it does
not decrease the score. If an optimal alignment includes $X_1$, then
after the change, the previous alignment (which now need not to be
optimal any more) scores at most $A$ units less than before the
change. And the new optimal alignment cannot score less.
\paragraph{The random transformation.} In this setup, the random transformation is the following. Recall
that $Z$ stands for the pair of sequences $(X,Y)$.  We choose two
specific letters $a$ and $b$ from the alphabet $\A$. Given the pair
$Z$ such that at least one of the sequences contain at least one
$a$, we choose a letter $a$ from $Z$ with uniform distribution and
change it into the letter $b$. Hence $\tilde Z$ and $Z$ differ from
one letter only and as just explained, the maximum decrease of score
is at most $A$, i.e. $L(\tilde Z)-L(Z)\geq -A$ i.e. the condition
{\bf A2} is satisfied. Choosing such a transformation is motivated
by the following result (c.f.\ Theorem 5.1 and Theorem 5.2 in \cite{goetze}):
\begin{theorem}\label{app1}
Suppose there exist letters $a,b \in \A$ such that
\begin{equation}\label{mimi}
\sum_{c\in\mathbb{A}}P(c)\big(S(b,c)-S(a,c)\big)>0.
\end{equation}
Then, there exist constants $\delta_0>-\infty$, $\epsilon_o>0$,
$\alpha>0$ and $n_0<\infty$ such that
\begin{equation}\label{L}
P\big(E[{L}(\tZ)-L(Z)|Z]\geq\epsilon_o \big) \geq 1-e^{-\alpha n}
\end{equation}
given $\delta<\delta_0$ and $n\geq n_0$.\end{theorem}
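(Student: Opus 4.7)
The plan is to combine two ingredients: a very negative $\delta$ forces the optimal alignment to have few gaps, and a Hoeffding-plus-union-bound argument controls, simultaneously over every few-gap alignment, the empirical sum that drives the score change. The strict inequality in the hypothesis provides a positive constant $\mu:=P(a)\bigl(\bar S_b-\bar S_a\bigr)>0$, where $\bar S_c:=\sum_{d\in\A}P(d)S(c,d)$, and this is the quantity we aim to exhibit in the conditional expectation.

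First I would show that for $\delta$ sufficiently negative there exists an arbitrarily small $\varepsilon>0$ such that, with probability at least $1-e^{-\alpha_1 n}$, every optimal alignment of $Z$ has at most $\varepsilon n$ gaps. Hoeffding applied to the diagonal score $\sum_{i=1}^n S(X_i,Y_i)$, a valid candidate alignment, yields $L(Z)\ge (\bar S-\eta')n$ with overwhelming probability, while any alignment with $k$ matches and $n-k$ gaps scores at most $kS_{\max}+(n-k)\delta$. Combining the two gives $n-k\le (S_{\max}-\bar S+\eta')n/(S_{\max}-\delta)$, whose right-hand side tends to zero as $\delta\to-\infty$.

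Next I would lower-bound $E[L(\tilde Z)-L(Z)\mid Z]$ by using the original optimal alignment as a suboptimal candidate for $\tilde Z$. Fix $z=(x,y)$ with optimal alignment $(\rho^*,\tau^*)$; flipping $X_i=a$ to $b$ at an aligned position $i=\rho^*_k$ and keeping the same alignment shows $L(\tilde Z_i)-L(Z)\ge S(b,Y_{\tau^*_k})-S(a,Y_{\tau^*_k})$, while at unaligned positions the old alignment already gives $L(\tilde Z_i)\ge L(Z)$. Averaging uniformly over the positions of $a$ in $Z$ (with the $Y$-side contribution handled in the same way) yields
\[
E\bigl[L(\tilde Z)-L(Z)\,\big|\,Z\bigr]\ge \frac{1}{N_a}\,T(\rho^*,\tau^*),\quad T(\rho,\tau):=\sum_{k:\,X_{\rho_k}=a}\bigl(S(b,Y_{\tau_k})-S(a,Y_{\tau_k})\bigr).
\]

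The crucial step, and where the main obstacle lies, is to bound $T(\rho^*,\tau^*)$ from below uniformly over all few-gap alignments. For a fixed alignment $(\rho,\tau)$ with $k$ matches, the summands $W_j:=\mathbf{1}[X_{\rho_j}=a]\bigl(S(b,Y_{\tau_j})-S(a,Y_{\tau_j})\bigr)$ are independent (because $\rho,\tau$ are strictly increasing), bounded by $2A$, and have mean $\mu$; Hoeffding yields $P\bigl(|T(\rho,\tau)-k\mu|>\eta n\bigr)\le 2e^{-\eta^2 n/(8A^2)}$. Meanwhile the number of alignments with at most $\varepsilon n$ gaps is bounded by $\binom{n}{\varepsilon n}^2\le (e/\varepsilon)^{2\varepsilon n}$, so with $\eta:=\mu/4$ fixed, choosing $\varepsilon$ (and hence $\delta_0$) small enough that $2\varepsilon\log(e/\varepsilon)<\eta^2/(16A^2)$ beats the combinatorial entropy and makes the union bound exponentially small. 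Without the restriction to few-gap alignments the naive entropy $\exp(\Theta(n))$ of all alignments would swallow any Hoeffding exponent, so the two steps are necessarily coupled. On the intersection of the high-probability events, $T(\rho^*,\tau^*)\ge (1-\varepsilon)n\mu-\eta n\ge \mu n/2$ and $N_a\le 2nP(a)(1+o(1))$ by Hoeffding on letter counts, giving $E[L(\tilde Z)-L(Z)\mid Z]\ge \epsilon_o:=\mu/(5P(a))$ for all $n$ large enough.
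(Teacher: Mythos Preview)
The paper does not prove Theorem~\ref{app1}; it is quoted from \cite{goetze} (Theorems~5.1 and~5.2 there), so there is no in-paper proof to compare against. Your two-ingredient outline---force few gaps by taking $\delta$ very negative, then control the score-change sum uniformly over all few-gap alignments via Hoeffding plus a union bound that beats the entropy $\binom{n}{\varepsilon n}^{2}$---is the natural route and almost certainly the one taken in \cite{goetze}.

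There is, however, one genuine gap in your sketch. The transformation picks the letter $a$ uniformly from \emph{all of $Z=(X,Y)$}, not just from $X$. When the chosen position lies in $Y$, say $Y_{\tau^*_k}=a$, reusing the old alignment gives $L(\tilde Z)-L(Z)\ge S(X_{\rho^*_k},b)-S(X_{\rho^*_k},a)$, so the $Y$-side analogue of your $W_j$ has mean
\[
\mu':=P(a)\sum_{c\in\A}P(c)\bigl(S(c,b)-S(c,a)\bigr),
\]
i.e.\ hypothesis~\eqref{mimi} with the two arguments of $S$ swapped. Your parenthetical ``the $Y$-side contribution handled in the same way'' presumes $\mu'>0$, which is \emph{not} implied by~\eqref{mimi} unless $S$ is symmetric. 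After the union bound what you actually obtain is $T_X(\rho^*,\tau^*)+T_Y(\rho^*,\tau^*)\ge k^*(\mu+\mu')-2\eta n$, so the argument as written needs $\mu+\mu'>0$; if $S$ is asymmetric with $\mu+\mu'\le 0$ the conclusion fails. You must either assume $S(c,d)=S(d,c)$, or assume the two-sided version of~\eqref{mimi}, or supply a separate device to absorb a possibly negative $Y$-contribution. Apart from this point, the outline is sound: the independence of the $W_j$ for a \emph{fixed} alignment is correctly identified, and the entropy-versus-Hoeffding balance $2\varepsilon\log(e/\varepsilon)<\eta^{2}/(16A^{2})$ is the right condition to make the union bound exponentially small.
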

{\bf Remarks:}
\begin{enumerate}
\item For the two-letter alphabet $\A=\{a,b\}$, condition
(\ref{mimi}) says
$$\big(S(b,a)-S(a,a)\big)P(a)+\big(S(b,b)-S(b,a)\big)P(b)>0.$$
When $S(b,b)=S(a,a)\ne S(a,b)=S(b,a)$, then (\ref{mimi}) is
satisfied if and only if $P(a)\ne P(b)$. For, example when
$S(b,b)=S(a,a)>S(b,a)=S(b,a)$, then (\ref{mimi}) holds if $P(a)<
P(b)$.
  \item The condition $\delta<\delta_0$ means that the gap penalty
  $-\delta$ has to be sufficiently large. Intuitively, the larger
  the gap penalty (smaller the gap price), the less gaps in optimal alignment so that the
  optimal alignment is closer to the pairwise comparison (Hamming distance). Some methods for determining a sufficient
  $\delta_0$, as well as some examples, are discussed in \cite{goetze}. We believe that the
  assumption on $\delta$ can be relaxed so that Theorem \ref{app1}
  holds under more general assumptions.
\end{enumerate}
In this section, we shall assume that there exists letters $a,b\in
\A$ so that the random transformation satisfies {\bf A1}
(equivalently, (\ref{L})). We shall show that all other assumptions
of Theorems \ref{gen1} and \ref{gen2} are fulfilled. We start with
the general case $|\A|>2$, the case $|\A|=2$ will be discussed in
the end of the present section.
\subsection{The case $|\A|>2$}\label{s:G} Let $\A=\{a,b,c_1,\ldots,c_l\}$,
where $l\geq 1$. The letters $a$ and $b$ are the ones used in the
random transformation. Let
$$q_j:={P(c_j)\over 1-P(a)-P(b)},\quad j=1,\ldots,l.$$
With $N_a$ and $N_b$ being the random number of $a$'s and $b$'s in
$X_1,\ldots,X_n,Y_1,\ldots,Y_n$, we define the random variables
$$U:=N_b,\quad V:=N_a+N_b.$$
For any $z\in \X_n\times \X_n$, thus $\mathfrak{u}(z)$ and $\mathfrak{v}(z)-\mathfrak{u}(z)$ stand
for the number of $b$'s and the number of $a$'s in both strings,
respectively. The random transformation applied on $z$ changes an
randomly chosen $a$ into a letter $b$, hence the transformation
increases $\mathfrak{u}(z)$ by one, whilst $\mathfrak{v}(z)$ remains unchanged.\\
Clearly the possible values for $U$ and $V$ are $\{0,\ldots,2n\}$
and the only restriction to $(U,V)$ is that $U\leq V$. Hence, in
this case $\S^U=\S^V=\{0,\ldots,2n\}$,
$$\S:=\{(u,v)\in \{0,\ldots,2n\}\times \{0,\ldots,2n\}: u\leq v\}$$
and for any $v$,
$$\S(v)=\{0,\ldots,v\}.$$
For any $z\in \X_n\times \X_n$,
\begin{align}\label{spr}\nonumber
P(Z=z|U=u,V=v)&=P(Z=z|N_a=v-u,N_b=u)\\ &=\left\{
                                      \begin{array}{ll}
                                        {\prod_{j=1}^l q_j^{m_j(x)}}{2n \choose{u}\;{v-u}\;{2n-v}}^{-1}, & \hbox{if $\mathfrak{u}(z)=u,\mathfrak{v}(z)=v$} \\
                                        0, & \hbox{else}
                                      \end{array}
                                    \right . ,
\end{align}
where $m_j(z)$ is the number of $c_j$-colored letters in $z$.
\paragraph{The sets ${\U}(v)$ and ${\V}$.} Note that $U\sim
B(2n,P(b))$ and $V\sim B(2n,P(a)+P(b))$. Also note that for any
$v>0$,
$$U_{(v)}\sim B(v,p_b),$$
where $p_b={P(b)\over P(a)+P(b)}.$ Let $p:=P(a)+P(b)$ and let
\begin{align*}\label{uvc}
\V& :=\left[2np-\sqrt{2n}, 2np+\sqrt{2n}\right]\cap \S^V,\\
\U(v)&:=\left[v p_b-\sqrt{v}, v p_b+\sqrt{v}\right]\cap \S(v).
\end{align*}
Now it is clear that the condition {\bf 3)} of Theorem \ref{gen1} is
fulfilled with $k_o=1$.\\\\
With the help of Chebyshev's inequality, it is straightforward to
see that for any $n$,
\begin{equation}\label{pbound}
P(V\in \V)\geq 1-p(1-p),\quad P(U_{(v)}\in \U(v))\geq
1-p_b(1-p_b).\end{equation} Clearly, there exists
 $v_o(p_b)$ so that $vp_b+\sqrt{v}<v$, whenever
$v>v_o$. Thus, there exists $n_1$ so that for every $n>n_1$ and
$v\in \V$, it holds that $v>v_o$. In particular, for every $n>n_1$
and for every pair $(u,v)$ such that $v\in \V$, $u\in \U(v)$, it
holds $v>u$.
\begin{lemma}\label{lemmaa2}
There exist an universal  constant $a>0$   and $n_2>n_1$ such that
for any $n>n_2$, for any $v\in \V$ and $u\in \U(v)$,  it holds
\begin{equation}\label{A2}
P(U=u, V=v) \ge \frac{1}{a n}.\end{equation}
\end{lemma}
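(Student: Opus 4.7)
The plan is to reduce $P(U=u,V=v)$ to a point probability of a trivariate multinomial and then invoke the local central limit estimate of Corollary \ref{multiclt}.

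First, I would observe that the triple $(N_b,N_a,2n-N_a-N_b)$ is multinomial with parameters $2n$ and $(P(b),P(a),1-p)$. Since by definition $U=N_b$ and $V=N_a+N_b$, the events $\{U=u,V=v\}$ and $\{N_b=u,N_a=v-u\}$ coincide, so
$$P(U=u,V=v)=P(N_b=u,N_a=v-u)=\binom{2n}{u\;\,v-u\;\,2n-v}P(b)^u P(a)^{v-u}(1-p)^{2n-v}.$$
This puts the quantity of interest exactly in the form handled by Corollary \ref{multiclt} with $m=2n$, $p_1=P(b)$, $p_2=P(a)$, $p_3=1-p$.

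Next, I would verify that whenever $v\in\V$ and $u\in\U(v)$, the pair $(u,v-u)$ lies in the $\beta\sqrt{2n}$-box around the mean $(2nP(b),2nP(a))$, with a $\beta$ depending only on $p$ and $p_b$ (in particular, independent of $n$). Writing
$$|u-2nP(b)|\le|u-vp_b|+p_b|v-2np|\le\sqrt{v}+p_b\sqrt{2n},$$
and using $\sqrt{v}\le\sqrt{2np+\sqrt{2n}}\le\sqrt{p+1}\cdot\sqrt{2n}$ for $n$ large (in particular for $n>n_1$ of the preceding paragraph), we obtain $|u-2nP(b)|\le\beta_1\sqrt{2n}$ with $\beta_1:=\sqrt{p+1}+p_b$. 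The triangle inequality then gives
$$|(v-u)-2nP(a)|\le|v-2np|+|u-2nP(b)|\le(1+\beta_1)\sqrt{2n},$$
so choosing $\beta:=1+\beta_1$ places $(u,v-u)$ inside the rectangle required by Corollary \ref{multiclt}.

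Finally, I would apply Corollary \ref{multiclt} with this $\beta$: it furnishes constants $b=b(\beta)$ and $m_0(\beta)$ such that for $2n>m_0$,
$$P(N_b=u,N_a=v-u)\ge\frac{1}{b\cdot 2n}.$$
Setting $a:=2b$ and taking $n_2$ to be any integer with $n_2>n_1$ and $2n_2>m_0(\beta)$ yields (\ref{A2}) for all $n>n_2$. The step that requires care is the uniformity argument in the second paragraph: one must check that the radius $\sqrt{v}$ used in defining $\U(v)$ does not blow up relative to $\sqrt{2n}$, so that a single $\beta$ works for all $v\in\V$. Beyond this routine bookkeeping there is no real obstacle, as the heavy lifting is done by the multinomial local limit theorem encoded in Corollary \ref{multiclt}.
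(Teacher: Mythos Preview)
Your argument is correct, but it is organized differently from the paper's own proof. The paper exploits the factorization
\[
P(U=u,V=v)=P(U_{(v)}=u)\,P(V=v)
\]
and bounds each factor separately by the \emph{binomial} local limit bound of Lemma~\ref{binomlclt}: since $U_{(v)}\sim B(v,p_b)$ and $V\sim B(2n,p)$, and since $\U(v)$ and $\V$ are precisely the $\sqrt{\cdot}$-windows around $vp_b$ and $2np$, each factor is at least $c/\sqrt{v}$ and $c'/\sqrt{2n}$ respectively, and the product gives $1/(an)$. Your route instead treats $(N_b,N_a,2n-N_a-N_b)$ as a single trinomial and invokes Corollary~\ref{multiclt} once. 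The trade-off is that the paper's decomposition matches the definitions of $\U(v),\V$ exactly, so no translation is needed; you must first convert the $v$-dependent window $|u-vp_b|\le\sqrt{v}$ into a uniform $\sqrt{2n}$-window about $2nP(b)$, which is the triangle-inequality bookkeeping you carry out. Since Corollary~\ref{multiclt} is itself obtained by iterating Lemma~\ref{binomlclt} on marginals, the two arguments are equivalent in substance; yours is a bit more direct, the paper's a bit more aligned with the chosen parametrization.
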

\begin{proof} The proof is based on Lemma \ref{binomlclt}. Since
$U_{(v)}\sim B(v,p_b),$ by (\ref{binom}) there exists $v_o$ and a
positive constant $b_1$ so that for any $v>v_o$,
\begin{equation}\label{a1}
P(U_{(v)}=u)\geq {1\over b_1 \sqrt{v}},\quad \forall u\in \U(v).
\end{equation}
Secondly, since $V\sim B(2n,p)$, there exists $n_{2,1}$ so that for
every $n>n_{2,1}$,
\begin{equation}\label{a2}
P(V=v)\geq {1\over b_2 \sqrt{2n}},\quad \forall v\in \V.
\end{equation}
Take now $n_2>n_{2,1}$ so big that for any $n>n_2$ it holds
$2np-\sqrt{2n}>v_o$. Then, for every $v\in \V$, (\ref{a1}) and
(\ref{a2}) both hold. Thus, for any $n>n_2$, $v\in \V$ and $u\in
\U(v)$, we have
$$P(U=u, V=v)=P(U_{(v)}=u)P(V=v)\geq {1\over b_1 \sqrt{v} b_2 \sqrt{2n}}\geq {1\over (b_1b_2)\sqrt{2n(2np+\sqrt{2n})}}\geq {1\over a n},$$
where the constant $a$ can be taken as  $2b_1b_2\sqrt{p+1}$.
\end{proof}
\begin{lemma}\label{condvar2}
There exists a finite $n_3$  and a constant $d>0$ such that
$n_3>n_2$ and for every $n > n_3$ and $v\in \V$, it holds
\begin{equation}\label{kass}
\v[U_{(v)}|U_{(v)}\in \U(v)]\geq d n .
\end{equation}
\end{lemma}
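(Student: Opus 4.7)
The plan is to recognize that $U_{(v)} \sim B(v, p_b)$ and that the conditioning set $\U(v) = [vp_b - \sqrt{v}, vp_b + \sqrt{v}] \cap \S(v)$ is precisely the interval $I_v$ appearing in Lemma \ref{binomlclt} with $\beta = 1$. Consequently the lemma can be applied directly: there exist an absolute constant $c_1 = c_1(1) > 0$ and an absolute integer $m_1 = m_1(1)$ such that whenever $v > m_1$,
\begin{equation*}
\v[U_{(v)} \mid U_{(v)} \in \U(v)] \geq c_1 v.
\end{equation*}

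The remaining task is to convert the lower bound $c_1 v$ into a lower bound of the form $d n$, uniformly in $v \in \V$. This is where the choice of $\V$ enters: for any $v \in \V$ we have $v \geq 2np - \sqrt{2n}$, so for $n$ large enough (larger than some $n_{3,1}$) we can guarantee both $v > m_1$ and $v \geq np$. Taking $n_3 := \max(n_2, n_{3,1})$ and $d := c_1 p$ yields the claim.

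No genuine obstacle is anticipated — the statement is essentially a rebranding of the binomial local CLT variance estimate already recorded as Lemma \ref{binomlclt}. The only minor care needed is to verify that the constants $c_1$ and $m_1$ produced by Lemma \ref{binomlclt} applied to the parameter pair $(v, p_b)$ do not depend on $v$; this is clear because the lemma quantifies only over $\beta$, and our $\beta = 1$ is fixed. Hence the proof reduces to three lines: invoke (\ref{binomvar}), note $v \geq np$ for $v \in \V$ and $n$ large, and set $d = c_1 p$.
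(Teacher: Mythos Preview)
Your proposal is correct and follows essentially the same route as the paper's proof: invoke the binomial variance bound (\ref{binomvar}) from Lemma~\ref{binomlclt} (with $\beta=1$) to obtain $\v[U_{(v)}\mid U_{(v)}\in\U(v)]\ge c_1 v$ for $v$ large, and then use $v\ge 2np-\sqrt{2n}$ for $v\in\V$ to convert this into a linear-in-$n$ lower bound. The only cosmetic difference is that the paper keeps the intermediate bound $c_1(2pn-\sqrt{2n})$ before absorbing it into a constant $d$, whereas you pass directly to $v\ge np$ and set $d=c_1p$.
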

\begin{proof}
From Lemma \ref{binomlclt}, we know that  there exists $c_1$ and
$v_o$, so that
\begin{equation}\label{c1}
\v[U_{(v)}|U_{(v)}\in \U(v)]\geq c_1  v,
\end{equation}
provided $v>v_o$. Let $n_{3,1}$ be such that for every $n>n_{3,1}$
$2np-\sqrt{2n}>   v_o.$ Then,  for any $n>n_{3,1}$ an any $v\in
\V$, we have that $v>v_o$ so that (\ref{c1}) holds and
\begin{equation}\label{c2}
\v[U_{(v)}|U_{(v)}\in \U(v)]\geq c_1(2pn-\sqrt{2n}).
\end{equation}
Finally take $n_3>n_{3,1}$ so big that for a constant $d>0$,
$c_1(2pn-\sqrt{2n})\geq d n$, provided $n>n_3$.\end{proof}
Finally we prove {\bf A3} for that particular model.
\begin{lemma}\label{tilde2} Let  $(u,v)\in \S$ be such that $v>u$. Let $Z\sim
P_{(u,v)}$ Then $\tZ\sim P_{(u+1,v)}$
\end{lemma}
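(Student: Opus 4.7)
The plan is to compute $P(\tilde Z = \tilde z)$ explicitly for an arbitrary $\tilde z$ with $\mathfrak{u}(\tilde z) = u+1$ and $\mathfrak{v}(\tilde z) = v$ and verify it matches the formula (\ref{spr}) for $P_{(u+1,v)}(\tilde z)$. The key structural observation is that the random transformation only swaps an $a$ for a $b$, so it preserves the count $m_j$ of each letter $c_j$ and it preserves $V$, while shifting $U$ up by one. In particular, if $z$ is any pre-image of $\tilde z$ under the transformation, then $m_j(z) = m_j(\tilde z)$ for all $j$, so all pre-images have the same $P_{(u,v)}$-probability.

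First I would enumerate the pre-images. The pre-images of $\tilde z$ are exactly the $u+1$ vectors obtained by picking one of the $u+1$ positions where $\tilde z$ has a $b$ and changing it back to an $a$; each such $z$ satisfies $\mathfrak{u}(z) = u$ and $\mathfrak{v}(z) = v$ (and lies in the support of $P_{(u,v)}$ since $v > u$ ensures $v - u \geq 1$). Each such $z$ contains exactly $v - u$ letters $a$, so given $Z = z$ the random transformation maps it to the specified $\tilde z$ with probability $1/(v-u)$.

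Next I would assemble the sum. Using independence of the transformation from $Z$ together with (\ref{spr}),
\begin{align*}
P(\tilde Z = \tilde z)
&= \sum_{z \,\mapsto\, \tilde z} P_{(u,v)}(z) \cdot \frac{1}{v-u}
= (u+1) \cdot \prod_{j=1}^l q_j^{m_j(\tilde z)} \binom{2n}{u\;v-u\;2n-v}^{-1} \cdot \frac{1}{v-u},
\end{align*}
where the factor $(u+1)$ counts the pre-images and the $m_j(z) = m_j(\tilde z)$ factor comes out of the sum. A direct calculation with factorials then gives
\[
\frac{u+1}{v-u} \binom{2n}{u\;v-u\;2n-v}^{-1} = \binom{2n}{u+1\;v-u-1\;2n-v}^{-1},
\]
so that $P(\tilde Z = \tilde z) = P_{(u+1,v)}(\tilde z)$, which is the desired conclusion. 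For $\tilde z$ with $(\mathfrak{u}(\tilde z), \mathfrak{v}(\tilde z)) \neq (u+1, v)$ both sides are zero, so the identification of distributions is complete.

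There isn't really a hard step here: the whole argument is a counting/bookkeeping computation, and the only subtlety is ensuring the pre-image set is non-empty, which is guaranteed by the hypothesis $v > u$ (so that $Z$ almost surely contains at least one $a$ and the transformation is well-defined). The conceptual content is simply that $P_{(u,v)}$ is uniform across placements of $a$'s and $b$'s (conditionally on the $c_j$-pattern), and a uniform random swap of an $a$ into a $b$ preserves that uniformity while incrementing the $b$-count.
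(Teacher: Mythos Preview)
Your proof is correct and follows essentially the same approach as the paper: both identify the $u+1$ pre-images of $\tilde z$ obtained by flipping one $b$ back to an $a$, use the uniform conditional probability $1/(v-u)$ for the transformation, and verify the multinomial-coefficient identity $\frac{u+1}{v-u}\binom{2n}{u\;v-u\;2n-v}^{-1}=\binom{2n}{u+1\;v-u-1\;2n-v}^{-1}$. Your write-up is slightly more concise and makes the key structural point (that $m_j(z)=m_j(\tilde z)$ for every pre-image) more explicit, but the argument is the same.
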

\begin{proof}
For any $z\in \X_n\times \X_n$, let the set ${\cal A}(z)$ consists
of possible outcomes after applying the random transformation to
$z$. Since the transformation changes an $a$ into $b$, the number
of different outcomes equals to the number of $a$'s in $z$, thus
$|{\cal A}(z)|=\mathfrak{v}(z)-\mathfrak{u}(z)$. Since the transformation picks the
letters uniformly, we obtain that for any $\tilde{z}\in {\cal
A}(z)$,
\begin{equation}\label{uz}
P(\tZ=\tilde{z}|Z=z)={1\over \mathfrak{v}(z)-\mathfrak{u}(z)}.
\end{equation}
Let the set ${\cal B}(\tilde{z})$ consist of all these pairs of
strings that could result $\tilde{z}$ after the transformation:
$\mathcal{B}(\tilde{x}):=\{ x \in \mathcal{X} : \tilde{x} \in
\mathcal{A}(x)\}$. Since before transformation one of $b$'s in
$\tilde{z}$ was $a$, clearly $|{\cal B}(\tilde{z})|=\mathfrak{u}(\tilde{z})$.
Recall that $U$ and $V$ are the functions of $Z$. Let $Z\sim
P_{(u,v)}$. We aim to find
\begin{align}\label{summa}
P(\tZ=\tilde{z})=P(\tZ=\tilde{z}|U=u,V=v)&=\sum_{z\in {\cal
B}(\tilde{z})}P(\tZ=\tilde{z}|Z=z)P(Z=z|U=u,V=v).\end{align} Let
$$S(u,v):=\{z: \mathfrak{u}(z)=u,\mathfrak{v}(z)=v\}.$$
Clearly the right hand side of (\ref{summa}) is zero, if
$$S(u,v)\cap {\cal B}(\tilde{z})=\emptyset.$$
This simply  means that the string $\tilde{z}$ does not satisfy at
least one of the following equalities:
$$\mathfrak{u}(\tilde{z})=u+1,\quad \mathfrak{v}(\tilde{z})=v.$$
Let us now assume that $\tilde{z}$ satisfies both equalities above.
In particular, $|{\cal B}(\tilde{z})|=u+1$ and any element in ${\cal
B}(\tilde{z})$ is such that the number of $b$'s is  $u$ and the
number of $a$'s is $v-u$ and the number of all $c_j$ equal to that
of $\tilde{z}$. i.e. $m_j(z)=m_j(\tilde{z})$ $\forall \tilde{z}\in
{\cal B}(\tilde{z})$. Clearly ${\cal B}(\tilde{z})\subset S(u,v)$.
By (\ref{spr}), thus
\begin{align*}
P(\tZ=\tilde{z}|U=u,V=v)&=\sum_{z\in {\cal
B}(\tilde{z})}P(\tZ=\tilde{z}|Z=z)P(Z=z|U=u,V=v)\\
&={1\over v-u}|{\cal B}(\tilde{z})| {2n \choose {v-u}\,\, {u}\,\, {2n-v} }^{-1}   {\prod_{j=1}^l q_j^{m_j(\tilde{z})}}\\
&={u+1\over v-u}{2n \choose {v-u}\,\, {u}\,\, {2n-v} }^{-1}\prod_{j=1}^l q_j^{m_j(\tilde{z})} \\
&={u!(u+1)(v-u)!(2n-v)!\over (v-u)2n!}\prod_{j=1}^l q_j^{m_j(\tilde{z})}\\
&={(u+1)!(v-u-1)!(2n-v)!\over 2n!}\prod_{j=1}^l q_j^{m_j(\tilde{z})}\\
&=\prod_{j=1}^l q_j^{m_j(\tilde{z})}{2n \choose {v-u-1}\,\,
{u+1}\,\, {2n-v} }^{-1}.\end{align*} By (\ref{spr}), $\tZ\sim
P_{(u+1,v)}$.
\end{proof}
\begin{theorem}\label{main2} Assume that the random transformation satisfies ${\bf
A 1}$. Then there exists an universal constant $b>0$ and
$n_6<\infty$ so that for every $n>n_6$, it holds
\begin{equation}\label{varbound}
\v[L(Z)]\geq b \cdot n.
\end{equation}
\end{theorem}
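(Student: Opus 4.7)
The plan is to invoke Theorem \ref{gen2} to upgrade the hypothesis {\bf A1} into condition {\bf 1)} of Theorem \ref{gen1}, and then assemble the remaining ingredients (already prepared in Lemmas \ref{lemmaa2}, \ref{condvar2}, \ref{tilde2}) so that (\ref{claim}) directly delivers a linear lower bound on $\v[L(Z)]$. Throughout we work with the specific choices of $U$, $V$, $\V$ and $\U(v)$ fixed in Subsection \ref{s:G}, and with $p:=P(a)+P(b)$ and $p_b:=P(b)/p$, both strictly in $(0,1)$ since $|\A|>2$.

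The first step is to verify the four hypotheses of Theorem \ref{gen2}. Assumption {\bf A1} is the standing hypothesis of the present theorem. Assumption {\bf A2} holds with $A:=\max_{c,d\in\A}S(c,d)$: as noted at the beginning of Section \ref{sec:scor}, changing a single letter in $Z$ can drop the optimal score by at most $A$, so in particular $L(\tZ)-L(Z)\geq -A$. Assumption {\bf A3} is exactly the content of Lemma \ref{tilde2}, and the local lower bound (\ref{LCLT}) on $P(U=u,V=v)$ with constant $a$ is exactly Lemma \ref{lemmaa2}. Hence Theorem \ref{gen2} produces an integer $n_5$ such that for every $n>n_5$ and every pair $(u,v)$ with $v\in\V$, $u\in\U(v)$, the monotonicity $l(u+1,v)-l(u,v)\geq \epsilon_o/2$ holds; this is precisely condition {\bf 1)} of Theorem \ref{gen1} with $\delta=\epsilon_o/2$ and $k_o=1$.

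The second step is to verify the remaining two hypotheses of Theorem \ref{gen1}. Condition {\bf 2)} is Lemma \ref{condvar2}, which supplies $\psi_n\geq dn$ for $n>n_3$. Condition {\bf 3)} is immediate: $\U(v)$ is an integer interval, so consecutive atoms are exactly one apart, matching $k_o=1$. Plugging these into (\ref{claim}) yields, for every $n>n_6:=\max(n_3,n_5)$,
\begin{align*}
\v[L(Z)] \;\geq\; \left(\frac{\epsilon_o}{2}\right)^{\!2}\!\cdot dn \cdot \sum_{v\in\V} P(U_{(v)}\in\U(v))\,P(V=v).
\end{align*}
From the Chebyshev estimates (\ref{pbound}) we have $P(U_{(v)}\in\U(v))\geq 1-p_b(1-p_b)$ uniformly in $v\in\V$ and $P(V\in\V)\geq 1-p(1-p)$, so the sum above is bounded below by the positive universal constant $(1-p_b(1-p_b))(1-p(1-p))$. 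Setting
\begin{align*}
b \;:=\; \frac{\epsilon_o^{\,2}\, d}{4}\,\big(1-p_b(1-p_b)\big)\big(1-p(1-p)\big) \;>\; 0
\end{align*}
gives the claim.

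There is essentially no new obstacle at this stage: all the technical work — the local-CLT joint density bound, the conditional variance bound, and the exact conditional distribution of the transformation — has been absorbed into Lemmas \ref{lemmaa2}, \ref{condvar2} and \ref{tilde2}. The only thing to watch is that the constants $\delta_0$, $\epsilon_o$ and $\alpha$ furnished by hypothesis {\bf A1} feed into $n_5$ through the inequality $\epsilon_o(1-an\,e^{-n^\alpha})-Aan\,e^{-n^\alpha}>\epsilon_o/2$ in the proof of Theorem \ref{gen2}, and that all thresholds $n_1,n_2,n_3,n_5$ are finally absorbed into a single $n_6$.
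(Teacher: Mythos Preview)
Your proof is correct and follows essentially the same route as the paper: verify {\bf A1}--{\bf A3} and (\ref{LCLT}) via the standing hypothesis, the single-letter change bound, Lemma \ref{tilde2}, and Lemma \ref{lemmaa2}; invoke Theorem \ref{gen2} to get condition {\bf 1)} with $\delta=\epsilon_o/2$ and $k_o=1$; then feed Lemma \ref{condvar2} and (\ref{pbound}) into Theorem \ref{gen1}, arriving at the same constant $b=\epsilon_o^2 d(1-p(1-p))(1-p_b(1-p_b))/4$. The only point the paper makes slightly more explicit is that for $n>n_1$ one has $v>u$ whenever $v\in\V$, $u\in\U(v)$, which is the precondition of Lemma \ref{tilde2}; you subsume this in your reference to the setup of Subsection \ref{s:G} and your final absorption of $n_1$ into $n_6$, which is fine.
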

\begin{proof} Let us first check the assumptions of Theorem
\ref{gen2}. {\bf A1} holds by the assumption. As explained above,
the random transformation is such that ${\bf A2}$ holds. Let now
$n_2$ be as in Lemma \ref{lemmaa2} and $n_3$ as in Lemma
\ref{condvar2}. Recall that $n_1<n_2<n_3<\infty$. Hence, for any
$n>n_3$, (\ref{A2}) and (\ref{kass}) hold; moreover, from
(\ref{pbound}), it follows that with $d_1=1-p(1-p)$ and
$d_2=1-p_b(1-p_b)$, the inequalities (\ref{dd1}) hold and for any
pair $(u,v)$ where $v\in \V$ and $u\in \U(v)$, we have that $v>u$.
The latter ensures that the random transformation is possible and
Lemma \ref{tilde2} now establishes {\bf A3}. Therefore, for every
$n>n_3$, the assumptions of Theorem \ref{gen2} are fulfilled and so
there exists $n_5>n_3$ so that for for every $n>n_5$, the
assumptions of Theorem \ref{gen1} hold with $\delta={\e_0\over
2}.$\\
We now apply Theorem \ref{gen1}. As just showed, the assumption {\bf
1}) holds for any $n>n_5$; as explained in Subsection \ref{s:G}, the
assumption {\bf 3)} holds with $k_o=1$. By (\ref{kass}), $\psi_n=d
n$. By Theorem \ref{gen1}, thus, the lower bound (\ref{varbound})
exists with
$$b={ \epsilon_o^2 d_1 d_2 d\over 4}.$$
\end{proof}
\subsection{The case $A=\{a,b\}$}
This case is easier.  The only random variable is $U$, formally we
can take $V\equiv 2n$. Then (\ref{spr}) is
$$P(Z=z|U=u)= P(Z=z|U=u,V=2n) = {\binom{2n}{u}}^{-1}I_{\{\mathfrak{u}(z)=u\}}.$$
Now take $\V=\{2n\}$ and
$$\U=\U(2n)=[2np_b-\sqrt{2n},2np_b-\sqrt{2n}]\cap S_n^U.$$  Then everything holds true: there clearly exists $n_1$
so that $u<v=2n$, whenever $n>n_1$ and $u\in \U$; the bound
(\ref{A2})  holds with $a=b_1$ (and, in fact $n^{-{1\over 2}}$
instead of $n^{-1}$); the proof of Lemma \ref{condvar2} is simply
(\ref{c1}) and the proof of Lemma \ref{tilde2} holds true with
$\mathfrak{v}(z)=2n$. Thus Theorem \ref{main2} holds.
\section{The length of the LCS of random i.i.d.\ block sequences}\label{sec:LCS}
In this section we are interested in the fluctuations of $L(Z)$ for the score function as in \eqref{LCS-scoring}, where $Z=(X,Y)$ are binary sequences having a certain random block structure. This random block structure was first considered in \cite{FT1}. In the present article, we consider a random block structure which is a generalization of the model in \cite{FT1}. We are able to show that the length of the longest common subsequence of two sequences having this random block structure grows linearly by following the general two-step approach. Therefore, we confirm in this setting Waterman's conjecture.
\\
\\
\noindent
Ofte in this section $x,y$ stand for binary strings of length $n>0$. We start by getting a bit more familiar with the LCS of $x$ and $y$. First, note that the LCS of $x$ and $y$ and the alignment generating it are not necessarily unique, but its length is unique.
\begin{example}
Let $x=100101100001101$ and $y=111000010101110$. The length of the LCS of $x$ and $y$ is $L_{15}(x,y)=10$. A candidate for the LCS of $x$ and $y$ is the string $1000100111$. This LCS could have came (but not exclusively) from any of the following alignments:
\[ \begin{array}{c|c|c|c|c|c|c|c|c|c|c|c|c|c|c|c|c|c|c|c|c|c|c}
x&&1&-&-&0&0&1&0&1&-&1&0&0&0&-&0&1&1&-&0&1&-\\ \hline
y&&1&1&1&0&0&-&0&-&0&1&-&0&-&1&0&1&1&1&-&1&0\\ \hline
{\bf LCS}&&{\bf 1}& & &{\bf 0}&{\bf 0}&&{\bf 0}&&&{\bf 1}&&{\bf 0}&&&{\bf 0}&{\bf 1}&{\bf 1}&&&{\bf 1}&
\end{array}\]
\[ \begin{array}{c|c|c|c|c|c|c|c|c|c|c|c|c|c|c|c|c|c|c|c|c|c|c}
x&&1&-&-&0&0&1&0&1&-&1&0&0&0&-&0&1&-&1&0&1&-\\ \hline
y&&1&1&1&0&0&-&0&-&0&1&-&-&0&1&0&1&1&1&-&1&0\\ \hline
{\bf LCS}&&{\bf 1}& & &{\bf 0}&{\bf 0}&&{\bf 0}&&&{\bf 1}&&&{\bf 0}&&{\bf 0}&{\bf 1}&&{\bf 1}&&{\bf 1}&
\end{array}\]
Another candidate for an LCS of $x$ and $y$ is the string $1000000110$.
\end{example}

\vspace{12pt}
\noindent
We now introduce the random block model:

\subsection{The 3-multinomial block model}\label{sec:block-model}
We say that a block of zeros of length $m \in \mathbb{Z}_+$ in $x$ is a run of $0$'s of maximal length between two ones, except for the block of zeros at the beginning of $x$, which only has a 1 inmediatly to its left. We consider the analog convention for a block of ones of length $m$ in $x$, as well as for any binary sequence. Let $l \ge 2$ and $q_1,q_2,q_3 \in (0,1)$ be parameters such that $q_1+q_2+q_3=1$. Let $(W_k)_k$ and ${(W'_k)}_k$ be two i.i.d.\ sequences of random variables taking values on $\{l-1,l,l+1\}$, independent of each other, with distribution 
\[ P(W_k=l_i)=P(W_k'=l_i) = q_i\,,\qquad\forall\,k \ge 1, \,\,i\in \{1,2,3\}\]
where $l_1:=l-1,l_2:=l$ and $l_3:=l+1$. Let $(w_k)_k$ be a realization of $(W_k)_k$. Let us construct $x^\infty=x_1x_2x_3\cdots$ an infinite binary sequence depending on $(w_k)_k$ as follows: We choose $\vartheta \in \{0,1\}$ with probability $1/2$, independently from everything else. Then, we built up the first block in $x^\infty$ as a block of $\vartheta$'s with lenght $w_1$, the second block in $x^\infty$ as a block of $(1-\vartheta)$'s with length $w_2$, the third block in $x^\infty$ as a block of $\vartheta$'s with length $w_3$, and so on. We built up, in a completely analog way, the sequence $y^\infty$ based on a realization ${(w'_k)}_k$ of ${(W'_k)}_k$. After this, for a given $n>0$, let us define $\hat{x}((w_k)_k):=x^\infty[1,n]:=x_1\cdots x_n$ and $\hat{y}((w'_k)_k):=y^\infty[1,n]:=y_1\cdots y_n$, namely the first $n$-bytes of the infinite sequence $x^\infty$, respectively $y^\infty$. Note that the last run of the same symbol in $\hat{x}((w_k)_k)$ (resp.\ in $\hat{y}((w'_k)_k)$) is not a block according to our definition, or though its length $r $ is such that $r\in \{1,\dots,l+1\}$. Naturally, $\hat{x}((w_k)_k)$ (or equivalently $\hat{y}((w'_k)_k)$) induces the set $\mathcal{X}_n \subset \{0,1\}^n$ of binary sequences having blocks only with length either $l-1, l$ or $l+1$ and a last run of the same symbol with length $r$ such that $r\in\{1,\dots,l+1\}$. Let us denote by $X:=\hat{x}((W_k)_k):=X_1\cdots X_n$ (resp. by $Y:=\hat{y}({(W'_k)}_k):=Y_1\cdots Y_n$) the associated random binary sequence of length $n$ whose realization is an element of $\mathcal{X}_n$. The process of allocating the blocks can be seen as independently drawing balls of 3 different colours from an urn, where a ball of colour $i$ has probability $q_i$ to be picked up, $i=1,2,3$. That is why we call this the $3$-multinomial block model. For $k \in \{l-1,l,l+1\}$ and $x \in \mathcal{X}_n$, let $b_k(x)$ be the number of blocks of length $k$ in $x$,
and denote $B_k:=b_k(X)$ the associated random variable.

\begin{example}
Take $l=3$ and let $W_1=2, W_2=3,W_3=2, W_4=4,W_5=3,\dots$ Suppose that we get $\vartheta=0$, so then $x^\infty=00111001111000\dots$. For $n=13$, we get the sequence $x= 0011100111100$ such that $b_2(x)=2, b_3(x)=1$ and $b_4(x)=1$, with a rest at the end of length $r=2$.
\end{example}

\noindent
Let us define the following three new random variables:
\begin{eqnarray}
T&:=&B_l+B_{l-1}+B_{l+1}\label{T}\\
U&:=&B_l-B_{l-1}-B_{l+1}\label{U} \\
R&:=&n-(\,l\, B_l+(l+1)\, B_{l+1}+(l-1)\, B_{l-1}\,).\label{R}
\end{eqnarray}
Given $(b_{l-1}(x),b_l(x),b_{l+1}(x))$, let us denote by
$(t(x),u(x),r(x))$ the solution of the linear system
\begin{eqnarray*}
t(x)&=&b_l(x)+b_{l-1}(x)+b_{l+1}(x)\\
u(x)&=&b_l(x)-b_{l-1}(x)-b_{l+1}(x)\\
r(x)&=&n-(\,l\, b_l(x)+(l+1)\, b_{l+1}(x)+(l-1)\, b_{l-1}(x)\,).
\end{eqnarray*}
The other way around, given any realization $(t,u,r)\in
\mathbb{Z}^3$ of $(T,U,R)$, let us denote by
$$(b_{l-1}(t,u,r),b_l(t,u,r),b_{l+1}(t,u,r))$$ the solution of the
linear system:
\begin{equation}\label{linearTZ}
\left(\begin{array}{c} b_{l-1}(t,u,r) \\ b_l(t,u,r) \\ b_{l+1}(t,u,r) \end{array}\right)=
\left(\begin{array}{cc} (2l+1)/4 & -1/4 \\ 1/2 & 1/2 \\ -(2l-1)/4 & -1/4\end{array}\right) \left( \begin{array}{c} t
 \\ u \end{array}\right)+ \left(\begin{array}{c} -(n-r)/2 \\ 0 \\ (n-r)/2 \end{array} \right).
\end{equation}
This means that we have a one-to-one correspondence between the
random variables $(B_{l-1},B_l,B_{l+1})$ and $(T,U,R)$, which will
be often used in what follows.
\paragraph{The 3-multinomial distribution.} We can compute the distribution of $X$ by taking into account its block structure. In order to do so, let us define the function
\[p(r):=P(W\geq r),\qquad\mbox{for $r\in \{1,\dots,l+1\}$.}\]
\noindent
Clearly, $p(r)=1$ when $r \in \{1,\ldots,l-1\}$, but
$p(l)=q_2+q_3$ and $p(l+1)=q_3$. Now, we see that for any $x\in \X_n$ it holds
\begin{equation}\label{probX}
P(X = x) = {1\over 2} q_1^{b_1(x)}q_2^{b_2(x)}q_3^{b_3(x)}p(r(x)),
\end{equation}
where the factor ${1\over 2}$ is needed because to every fixed
block-sequence corresponds two sequences in $\X_n$, both having the
same probability (it is the choosing of the colour of the first block with probability $1/2$). Moreover, e.g.\ by the urn analogy, we find that the joint distribution of $(B_{l-1},B_l,B_{l+1})$ can be computed as following: given $b_1,b_2,b_3\in \mathbb{N}$ it holds
\begin{equation}\label{multiB}
P(B_{l-1}=b_1,B_l=b_2,B_{l+1}=b_3)= \sum_{x \in S(b_1,b_2,b_3)} P(X=x) = {b_1+b_2+b_3 \choose b_1\;\;b_2\;\;b_3}q_1^{b_1}q_2^{b_2}q_3^{b_3}p(r),
\end{equation}
where
\[S(b_1,b_2,b_3):=\{ x\in\mathcal{X}_n : b_{l-1}(x)=b_1,b_l(x)=b_2,b_{l+1}(x)=b_3\} \]
and $r=n-(l-1)b_1-l b_2-(l+1)b_3.$ Note that the factor ${1\over 2}$
disappears. So, combining \eqref{probX} and \eqref{multiB} we
naturally get that for $x \in \mathcal{X}$ it holds
\begin{equation}\label{probXcond}
P(X=x | B_{l-1}=b_1,B_l=b_2,B_{l+1}=b_3) = {1\over 2}\,{b_1+b_2+b_3 \choose b_1\;\;b_2\;\;b_3}^{-1}1_{S(b_1,b_2,b_3)}(x).
\end{equation}
Note also that, from \eqref{linearTZ}, we can even compute the joint
distribution of $(T,U,R)$ as follows:
\begin{equation}\label{TZRdist}
P(U=u,T=t,R=r) ={ t \choose b_{l-1}(t,u,r)\;b_l(t,u,r)\;b_{l+1}(t,u,r)}q_1^{b_{l-1}(t,u,r)}q_2^{b_l(t,u,r)}q_3^{b_{l+1}(t,u,r)}p(r).
\end{equation}
\subsection{Fluctuations of the length of the LCS in the 3-multinomial block model}\label{subsec:u}
Let $Z=(X,Y)$ be a vector of binary sequences, where each component has the previously introduced random block structure. Let us identify $U$ defined in \eqref{U} with the random variable $\mathfrak{u}(Z)$ as well as the vector $(T,R)$ with the random variable $\mathfrak{v}(Z)$. Therefore, in what follows $(U,V)=(T,U,R)$ and its support $\S$ consists of triples $(t,u,r)$ so that $P(U=u,T=t,R=r)>0$. We would like to use Theorem \ref{gen1}, so we must look for sets $\U$ and $\V$ such that the conditions {\bf 1), 2)} and {\bf 3)} are satisfied. 
For any $c>0$, define
\begin{align*}\label{uvc}
\U^c&:=\left[ \frac{n}{\mu}(q_2-q_1-q_3)-c\sqrt{n}, \frac{n}{\mu}(q_2-q_1-q_3)+c\sqrt{n}\right]\cap \mathbb{Z},\\
 {\mathcal{T}}_n^c&:= \left[ \frac{n}{\mu}-c\sqrt{n},\frac{n}{\mu}+c\sqrt{n}\right]\cap \mathbb{Z},\\
\V^c& :=\big(\mathcal{T}_n^c\times \{1,\ldots, l+1\}\big)\cap\S^V.
\end{align*}

\vspace{12pt}
\noindent
Note that the notation $\U^c$ (resp.\ $\mathcal{T}_n^c$ and $\V^c$) means that the set $\U$ explicitely depends on the constant $c>0$, and has nothing to do with the notation for the complement of a set. Recall the right hand side of \eqref{claimU} and the 2.\ Remark after Theorem \ref{gen1}. A first observation is that, uniformly on $n$, $P(U \in \U^c,V \in \V^c)$ is bounded by below by a constant:
\begin{lemma}\label{lemma09}
There exist universal constant $c>0$  (not depending on $n$)  and
$n_0 < \infty$ such that for every $n > n_0$ it holds
\[P\big(U\in \U^c, V\in \V^c\big) \ge 0.9  .\]
\end{lemma}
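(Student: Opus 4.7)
The plan is to apply Chebyshev's inequality (Lemma \ref{chevi}) separately to $T$ and to $U$ and then to union-bound. First I would observe that, by the block-model construction, the tail length $R$ always belongs to $\{1,\ldots,l+1\}$, so the constraint $R\in\{1,\ldots,l+1\}$ built into the definition of $\V^c$ is automatic. Consequently $\{V\in\V^c\}=\{T\in\mathcal{T}_n^c\}$, and it suffices to prove
\[
P\bigl(T\in\mathcal{T}_n^c,\ U\in\U^c\bigr)\ge 0.9.
\]

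For the moments of $T$ I would work directly with the i.i.d.\ block-length sequence $W_1,W_2,\ldots$ used to build $X$, which has common mean $\mu$, and with the partial sums $S_k:=W_1+\cdots+W_k$. By definition, $T$ is a stopping time for the natural filtration of $(W_k)$ and $S_T=n-R$ with $R\in\{1,\ldots,l+1\}$. Wald's identity gives $\mu E[T]=n-E[R]=n+O(1)$, hence $E[T]=n/\mu+O(1)$. The estimate $\v[T]=O(n)$ is the standard renewal-theoretic bound, which is valid here because the increments $W_k$ are bounded. Chebyshev then yields a universal constant $C_1$ with $P(T\notin\mathcal{T}_n^c)\le C_1/c^2$ for every $n$ larger than some $n_1$.

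For the moments of $U$, set $g(w):=\mathbf{1}_{\{w=l\}}-\mathbf{1}_{\{w=l-1\}}-\mathbf{1}_{\{w=l+1\}}$, so that $U=\sum_{k=1}^{T}g(W_k)$, $|g|\le 1$ and $E[g(W)]=q_2-q_1-q_3$. Wald's identity yields $E[U]=E[T]\cdot E[g(W)]=(n/\mu)(q_2-q_1-q_3)+O(1)$. For the variance I would decompose
\[
U-E[T]\,E[g(W)]=\sum_{k=1}^{T}\bigl(g(W_k)-E[g(W)]\bigr)+E[g(W)]\bigl(T-E[T]\bigr),
\]
and control each piece: by Wald's second-moment identity the first sum, being a stopped centered square-integrable random walk with bounded stopping time, has $L^2$ norm equal to $E[T]\,\v[g(W)]=O(n)$, while the second term contributes at most $(E[g(W)])^2\,\v[T]=O(n)$ by the previous step. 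Therefore $\v[U]=O(n)$ and Chebyshev gives $P(U\notin\U^c)\le C_2/c^2$ for a universal $C_2$.

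Finally I would choose $c$ so large that $(C_1+C_2)/c^2\le 0.1$ and union-bound to get $P(T\in\mathcal{T}_n^c,\ U\in\U^c)\ge 0.9$ for all $n$ large enough. The only step that is not immediate is the renewal-type bound $\v[T]=O(n)$; this is the main technical point, but it is entirely standard given that the block lengths $W_k$ take values in the bounded set $\{l-1,l,l+1\}$, so the argument should close within a few lines of extra detail.
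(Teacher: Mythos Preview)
Your argument is correct and self-contained, but it follows a genuinely different route from the paper's. The paper does not control $T$ and $U$ directly; instead it first proves, as a separate proposition in the Appendix, that each block count $B_{l_i}$ satisfies $|B_{l_i}-q_i n/\mu|\le c'\sqrt{n}$ with high probability, by combining a Hoeffding bound on $W_1+\cdots+W_m$ (to control the total number of blocks $T$) with a Chebyshev bound on the indicator sums $\sum_{k\le m}\mathbf{1}_{\{W_k=l_i\}}$. Concentration of $T=B_{l-1}+B_l+B_{l+1}$ and $U=B_l-B_{l-1}-B_{l+1}$ then follows in one line from the triangle inequality, and choosing $\epsilon=1/60$ gives the $0.9$ bound. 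Your renewal/Wald approach is more probabilistically natural and reaches the present lemma faster; the paper's detour through the individual $B_{l_i}$ pays off elsewhere, since precisely those estimates are reused in Lemmas~\ref{RMpossible} and~\ref{lemmaa}.

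One small technical correction: $T$ itself is \emph{not} a stopping time for the natural filtration of $(W_k)$, since $\{T=j\}=\{S_j<n\le S_{j+1}\}$ depends on $W_{j+1}$. It is $T+1=\min\{k:S_k\ge n\}$ that is the genuine first-passage stopping time. You should therefore apply Wald's first and second identities to $T+1$ and then observe that passing from $T+1$ to $T$ alters each sum by a single bounded term; the $O(1)$ and $O(n)$ conclusions for means and variances are unaffected, but the phrasing needs this adjustment.
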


\vspace{6pt}
\noindent
The proof is a rather straightforward application of large deviation
techniques, and therefore it is contained in the Appendix. We shall also need the following lemma (proven in the Appendix):
\begin{lemma}\label{RMpossible}  There exist an universal constant $\alpha>0$
and $n_1 \in (n_0,\infty)$ such that for every $n>n_1$ and $(u,v) \in
\mathcal{S}_n \cap (\mathcal{U}_n\times \mathcal{V}_n)$, it holds
\begin{eqnarray}\label{ulboundsbk}
\left| b_{l_i}(u,v)- q_i\frac{n}{\mu} \right| \le \alpha \sqrt{n}
\quad\mbox{and}\quad b_{l_i}(u,v) \ge 1, \quad\forall\,i=1,2,3.
\end{eqnarray}
\end{lemma}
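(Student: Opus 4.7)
The plan is to exploit the affine structure of the system \eqref{linearTZ} and reduce the claim to an algebraic identity at the centre of the admissible region, then bound small perturbations. Set $t_0:=n/\mu$ and $u_0:=(n/\mu)(q_2-q_1-q_3)$. By the very definitions of $\mathcal{U}_n=\mathcal{U}^c$ and $\mathcal{V}_n=\mathcal{V}^c$, every $(u,v)\in\mathcal{S}_n\cap(\mathcal{U}_n\times\mathcal{V}_n)$ with $v=(t,r)$ satisfies $|t-t_0|\le c\sqrt{n}$, $|u-u_0|\le c\sqrt{n}$, and $r\in\{1,\ldots,l+1\}$.

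First, I would verify the centred identity
\[ b_{l-1}(t_0,u_0,r)=\frac{q_1 n}{\mu}+\frac{r}{2},\qquad b_l(t_0,u_0,r)=\frac{q_2 n}{\mu},\qquad b_{l+1}(t_0,u_0,r)=\frac{q_3 n}{\mu}-\frac{r}{2} \]
by a direct computation in \eqref{linearTZ}. The key ingredients are $q_1+q_2+q_3=1$ (which rewrites $q_2-q_1-q_3=2q_2-1$) together with the definition $\mu=(l-1)q_1+l q_2+(l+1)q_3$; the coefficient of $n$ in each row collapses exactly to $q_i/\mu$, while the only remaining dependence on $r$ comes from the offsets $\pm(n-r)/2$ and contributes exactly $\pm r/2$. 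Since $r\le l+1$, this yields $b_{l_i}(t_0,u_0,r)=q_i n/\mu+O_l(1)$ with an error bounded by $(l+1)/2$.

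Next, since $(t,u)\mapsto b_{l_i}(t,u,r)$ is affine and the $L^{1}$--norm of each row of the matrix in \eqref{linearTZ} is bounded by $(l+1)/2$ (the largest being the $b_{l-1}$ row, $(2l+1)/4+1/4=(l+1)/2$), the triangle inequality gives
\[ |b_{l_i}(u,v)-b_{l_i}(t_0,u_0,r)|\le \frac{l+1}{2}\cdot c\sqrt{n}. \]
Combining with the centred identity of the previous step and absorbing the residual constant $|r/2|\le(l+1)/2$ into the $\sqrt{n}$ term yields the first claim in \eqref{ulboundsbk} with, for instance, $\alpha:=(l+1)(c+1)/2$, uniformly on the admissible region.

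For the positivity claim, since each $q_i>0$ and $\alpha$ is fixed, $q_i n/\mu-\alpha\sqrt{n}\to\infty$ as $n\to\infty$; thus one can choose $n_1>n_0$ large enough that $(\min_i q_i)\,n/\mu-\alpha\sqrt{n}\ge 1$ for every $n>n_1$, which forces $b_{l_i}(u,v)\ge 1$ on the admissible region. The only delicate step in this plan is the algebraic verification of the centred identity; once the cancellation $l+1-q_2-\mu=2q_1$ (and its analogue $\mu-l-q_2+1=2q_3$) is observed, everything else is an automatic consequence of the affine structure of \eqref{linearTZ} and the definitions of $\mathcal{U}_n$ and $\mathcal{V}_n$.
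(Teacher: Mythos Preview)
Your proposal is correct and follows essentially the same route as the paper's proof: both exploit the affine formula \eqref{linearTZ}, parametrize $t=n/\mu+\sigma_1$, $u=(n/\mu)(q_2-q_1-q_3)+\sigma_2$ with $|\sigma_i|\le c\sqrt n$, and read off that each $b_{l_i}$ equals $q_i n/\mu$ plus an error of order $\sqrt n$. The only cosmetic difference is that you first isolate the centred identity $b_{l_i}(t_0,u_0,r)=q_i n/\mu\pm r/2$ and then bound the linear perturbation via the row $L^1$--norms of the matrix, whereas the paper substitutes $\sigma_1,\sigma_2$ directly into \eqref{linearTZ} and bounds the resulting expressions term by term; this leads to slightly different (but equally valid) explicit constants, $\alpha=(l+1)(c+1)/2$ versus the paper's $\alpha=c(l+2)/2$.
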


\vspace{12pt}
\noindent 
In what follows, we shall take $\U:=\U^c$ and  $\V=\V^c$, where
$c>0$ is as in Lemma \ref{lemma09} and we shall take $n>n_0$. Recall the definition $\U(v):=\U \cap \S(v)$, for $v \in \V$. We show now how the conditions of Theorem \ref{gen1} are fulfilled:
\paragraph{Condition 3).}  Lets us start by showing that $u\in \U(v)$ implies $u+i\notin
\U(v)$ for $i=1,2,3$. Indeed given $u \in \U(v)$, it is enough to realize that $b_{l-1}(u+i,v)$ is not an integer if $i=1,2,3$ but $b_{l-1}(u+4,v)$ is an integer, where $b_{l-1}(u,v),b_{l}(u,v),b_{l+1}(u,v)$ are the integer solutions of the system \eqref{linearTZ}. Next, from Lemma \ref{RMpossible}, it follows (among other things) that, if $n$ is big enough, $u+4\in \S(v)$ for every $v\in \V$ and $u\in \U(v)$, which finally implies that $\U(v)$ is of the form (\ref{lattice}) with $k_o=4$ so that the condition ${\bf 3)}$ of Theorem \ref{gen1} holds with
$k_o=4$. Let us explain all the last argument. For every $n>n_1,$ $v\in \V$ and $u\in \U(v)$, we have that necessarily $u+4\in S_n(v)$, so $(u,v) \in \mathcal{S}_n$, therefore there exists at least one possible outcome $x\in \X_n$ so that $\mathfrak{u}(x)=u$ and $\mathfrak{v}(x)=v$. Moreover, from (\ref{ulboundsbk}), it follows that $b_{l_i}(u,v)\geq 1$ for every $i=1,2,3$. Thus, in $x$ there are at least one block from every size $\{l-1,l,l+1\}$. Deleting from $x$ one bit from a block of the length $l+1$ and adding one bit to the block of length $l-1$, we turn both them to the blocks of the length $l$. This transformation does not change the number of blocks and the sum of the lengths of all blocks, so we have another possible outcome of $X$, say $\tilde{x}$ with $\mathfrak{u}(\tilde{x})=u+4$ and $\mathfrak{v}(\tilde{x})=v$. Hence $(u+4,v) \in \mathcal{S}_n$ as well. If we keep on doing so, there exist an integer $m(v,n)>1$ and $u_o(v,n)\in \mathbb{Z}$ such that 
\begin{equation}\label{um}
\U(v)=\{u_o(v,n)+4i:\,\, i=1,\ldots, m(v,n)\}.
\end{equation}
\noindent It is not hard to see that, for every $n>n_1$ and $v\in \V $, the integer $m(v,n)$ satisfies
\begin{equation}\label{m}
{2c \sqrt{n}\over 4}-2 < m(v,n) < {2c \sqrt{n}\over 4}+ 2.\end{equation}

\paragraph{Condition 2).} 
Recall that $(u,v)=(t,u,r)$. Let $U_{(v)}$ denote a random variable
distributed as $U$ given $V=v$, namely for every $z\in \mathbb{Z}$ it holds
\[P(U_{(v)}=z)=P(U=z|V=v).\]
For every $n>n_1$ and $v\in \V $, let us define:
$$p_n(i):=P\big(U_{(v)}=u_o(v,n)+4i\,|\,U_{(v)}\in\U \big),\quad i=1,\ldots, m(v,n).$$
The following lemma shows that the ratio $ p_n(i+1) / p_n(i)$ tends
to one with speed $O(n^{-{1\over2}})$. The proof, given in the Appendix,
is heavily based on the following well-known inequalities:
\begin{eqnarray}\label{log}
-\frac{3x}{2} &\le& \ln (1-x), \quad\mbox{for $0<x \le 0.5$} \nonumber \\
\ln(1+x) &\le& x, \quad \mbox{for $x>-1$.}
\end{eqnarray}
\begin{lemma}\label{quotientTZR} There exists an universal constant $K<\infty
$ and $n_2>n_1$ such that for every $n>n_2$ and $v \in\V$ it holds,
\begin{equation}\label{BinScope}
1-\frac{K}{\sqrt{n}} \le \frac{p_n(i+1)}{p_n(i)} \le 1+\frac{K}{\sqrt{n}},\quad i=1,\ldots, m(v,n).
\end{equation}
\end{lemma}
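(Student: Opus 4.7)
The plan is to reduce the claim to a clean closed-form expression for the one-step ratio and then apply Lemma \ref{RMpossible} together with the inequalities \eqref{log}. First I would observe that the conditioning factor $P(U_{(v)}\in\U)$ cancels in the ratio, so
\[ \frac{p_n(i+1)}{p_n(i)} = \frac{P(U=u_o(v,n)+4(i+1),V=v)}{P(U=u_o(v,n)+4i,V=v)}. \]
Writing $v=(t,r)$ and using the explicit joint law \eqref{TZRdist}, the factor $p(r)$ cancels as well, so only the multinomial coefficients and the $q$-powers remain.

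Next I would determine how the triple $(b_{l-1},b_l,b_{l+1})$ encoded by \eqref{linearTZ} changes when we move from $u$ to $u+4$ at fixed $(t,r)$. A direct check of the linear system shows that $b_{l-1}$ decreases by $1$, $b_l$ increases by $2$, and $b_{l+1}$ decreases by $1$ (this is, in fact, exactly the effect of the block-transformation described in the introduction). Consequently the ratio of multinomial coefficients collapses to $\dfrac{b_{l-1}\,b_{l+1}}{(b_l+1)(b_l+2)}$ and the ratio of $q$-powers collapses to $\dfrac{q_2^2}{q_1 q_3}$. Thus, writing $b_i$ for $b_{l_i}(u,v)$,
\begin{equation*}
\frac{p_n(i+1)}{p_n(i)} \;=\; \frac{b_1\,b_3}{(b_2+1)(b_2+2)}\cdot\frac{q_2^2}{q_1 q_3}.
\end{equation*}

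For the final step I would invoke Lemma \ref{RMpossible}, which guarantees that for $n>n_1$ and every $u\in\U(v)$, $v\in\V$, one has $b_i=q_i n/\mu+O(\sqrt{n})$ uniformly, with $b_i\ge 1$. Writing $b_i=(q_i n/\mu)(1+\varepsilon_i)$ with $|\varepsilon_i|\le C/\sqrt{n}$ for some universal $C$, the leading $(n/\mu)^2$-terms match with $q_1q_3/q_2^2$ and cancel against the factor $q_2^2/(q_1q_3)$, leaving
\[ \frac{p_n(i+1)}{p_n(i)} \;=\; \frac{(1+\varepsilon_1)(1+\varepsilon_3)}{(1+\varepsilon_2)^2}\cdot\bigl(1+O(1/n)\bigr). \]
Taking logarithms and applying the two inequalities in \eqref{log} to each factor gives $\bigl|\log\bigl(p_n(i+1)/p_n(i)\bigr)\bigr|\le K'/\sqrt{n}$ for some universal $K'$ and all large $n$; exponentiating yields \eqref{BinScope} with a suitable $K$ and $n_2>n_1$.

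The only real obstacle is bookkeeping: one must verify that the $+1,+2$ shifts in $(b_l+1)(b_l+2)$ and the $O(1/n)$ error from completing the factor $b_l(b_l-1)\cdots$ versus $b_l^2$ are all absorbed into the $O(1/\sqrt{n})$ bound, which is automatic once Lemma \ref{RMpossible} ensures $b_l\sim n$. Everything else is algebraic manipulation and two applications of \eqref{log}.
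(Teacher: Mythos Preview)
Your proposal is correct and follows essentially the same route as the paper: derive the closed form $\dfrac{b_{l-1}b_{l+1}}{(b_l+1)(b_l+2)}\cdot\dfrac{q_2^2}{q_1q_3}$ for the ratio via \eqref{TZRdist} and the $u\mapsto u+4$ shift in \eqref{linearTZ}, then feed in the $b_i=q_i n/\mu+O(\sqrt n)$ bounds from Lemma~\ref{RMpossible} and control the resulting product of $(1+\varepsilon)$-factors with the log inequalities \eqref{log}. The paper carries out the last step by treating the upper and lower bounds separately with explicit constants (and a Taylor remainder estimate for the upper bound), whereas you bundle both sides into a single $|\log(\cdot)|\le K'/\sqrt n$ estimate; this is a cosmetic difference, not a different argument.
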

Recall \eqref{phi}, 2.\ and 4.\ Remark after Theorem \ref{gen1}. We are now ready to prove that the (conditional) variance of $U$ increases linearly, i.e.\ condition {\bf 2)}:
\begin{lemma}\label{variance}
 There exist an universal constant $d>0$ and  $n_3>n_2$ so that for every $n>n_3$ and for every  $v\in
\V$ it holds,
\begin{equation}\label{varlin}
\v[U|U\in \U,V=v]=\v[U_{(v)}|U_{(v)}\in \U]=\v[U_{(v)}|U_{(v)} \in\U(v)]\geq dn.
\end{equation}
\end{lemma}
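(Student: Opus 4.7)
The plan is to exploit the fact that, on the arithmetic progression $\U(v)=\{u_o(v,n)+4i : i=1,\ldots,m\}$ with $m=m(v,n)$ of order $\sqrt{n}$ by \eqref{m}, the conditional mass function $i\mapsto p_n(i)$ is close to uniform by Lemma \ref{quotientTZR}. Since a near-uniform distribution on a discrete set of diameter $\Theta(\sqrt{n})$ has variance $\Theta(n)$, this is exactly the target bound. Accordingly, the proof will split into (i) upgrading the local ratio bound \eqref{BinScope} to a uniform lower bound on the masses, and (ii) a direct discrete-variance computation.

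For step (i), I would iterate \eqref{BinScope} to obtain, for any $1\le i\le j\le m$,
\[
\Bigl(1-\tfrac{K}{\sqrt n}\Bigr)^{j-i}\le \frac{p_n(j)}{p_n(i)}\le \Bigl(1+\tfrac{K}{\sqrt n}\Bigr)^{j-i}.
\]
Using the inequalities $\ln(1+x)\le x$ and $\ln(1-x)\ge -\tfrac{3}{2}x$ from \eqref{log}, together with $j-i\le m\le c\sqrt n$ (for $n$ large, via \eqref{m}), both sides collapse to universal constants independent of $n$ and $v$. Combined with $\sum_{i=1}^m p_n(i)=1$, which forces $\max_i p_n(i)\ge 1/m$, this yields a universal $c_1>0$ with $p_n(i)\ge c_1/m$ for every $i\in\{1,\dots,m\}$.

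For step (ii), let $u_i:=u_o(v,n)+4i$ and let $\mu$ denote the conditional mean of $U_{(v)}$ given $U_{(v)}\in\U(v)$. Then
\[
\v[U_{(v)}\mid U_{(v)}\in\U(v)]=\sum_{i=1}^m p_n(i)(u_i-\mu)^2\ge \frac{c_1}{m}\sum_{i=1}^m(u_i-\mu)^2.
\]
Since $\sum_{i=1}^m (u_i-\mu)^2$ is minimised at the arithmetic mean $\bar u$ where it equals $\tfrac{4}{3}m(m^2-1)$, the right-hand side is at least $\tfrac{4c_1}{3}(m^2-1)$. Applying \eqref{m} a final time to conclude $m^2-1\ge \tfrac{c^2}{8}n$ for $n$ large enough gives the claim with $d$ a fixed multiple of $c_1 c^2$.

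The only delicate point is step (i): the key quantitative balance is $m\cdot (K/\sqrt n)=O(1)$, which is exactly what allows the product of $\sqrt n$-many factors $1\pm K/\sqrt n$ to remain bounded, both above and below, by universal constants. This is precisely where Lemma \ref{quotientTZR} does its work; the remaining manipulations are routine. One minor bookkeeping issue will be to choose $n_3>n_2$ large enough so that (a) $K/\sqrt n \le 1/2$ for \eqref{log} to apply, (b) $m\le c\sqrt n$, and (c) $m^2-1\ge (c^2/8)n$, but all three hold automatically for large $n$ by \eqref{m}.
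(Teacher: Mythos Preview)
Your proposal is correct and follows essentially the same approach as the paper: both iterate \eqref{BinScope} over at most $m=O(\sqrt n)$ steps, use the log inequalities \eqref{log} to bound all ratios $p_n(i)/p_n(j)$ by a universal constant, deduce $p_n(i)\ge c_1/m$, and then lower-bound the variance using $m=\Theta(\sqrt n)$. The only cosmetic difference is in the final variance step: the paper restricts to indices $i\ge 3m/4$ and bounds crudely, whereas you use the exact identity $\sum_{i=1}^m(u_i-\bar u)^2=\tfrac{4}{3}m(m^2-1)$ together with the minimisation of $\sum(u_i-\mu)^2$ at $\mu=\bar u$, which is slightly cleaner but equivalent in spirit.
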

\begin{proof} Let $n>n_2$ and $v\in \V$. Recall (\ref{um}) and
(\ref{m}). There  exist constants $0<d_1<d_2$ so that for every $n$
big enough, say $n>n_3>n_2$, it holds $$ d_2 \sqrt{n}<{2c
\sqrt{n}\over 4} -6 <{2c \sqrt{n}\over 4}+ 2<    d_2 \sqrt{n}.$$
From (\ref{m}), it follows that for every $n>n_3$ and $v\in V$
\begin{equation}\label{dd}
d_1 \sqrt{n} <  m-4 < m <    d_2 \sqrt{n}.
\end{equation}
Take $n>n_3$. From  \eqref{BinScope}, it follows that
\[1-\frac{K}{\sqrt{n}} \le {p_n(i+1)\over p_n(i)} \le 1+\frac{K}{\sqrt{n}}\]
for  $i=1,\dots m-1$. Hence, for every $i,k \in \mathbb{N}$ such
that $i+k\leq m$, it holds
$$\Big(1-{K\over \sqrt{n}}\Big)^k \leq {p_n(i+k)\over p_n(i)}={p_n(i+1)\over
p_n(i)}\cdot {p_n(i+2)\over p_n(i+1)}\cdots {p_n(i+k)\over
p_n(i+k-1)} \leq \left(1+\frac{K}{\sqrt{n}}\right)^k.$$ Recall
\eqref{log}, so that from the last inequality we get
\begin{equation}\label{expEQUAL}
\exp\left(-\frac{3K}{2\sqrt{n}}k\right) \le {p_n(i+k)\over p_n(i)}\le
\exp\left(\frac{K}{\sqrt{n}}k\right).
\end{equation}
Thus, for every $1\leq i,j \leq m$, we have
\begin{equation}\label{E}
{p_n(i)\over p_n(j)}\leq \exp \left(\frac{3K}{2\sqrt{n}}|i-j|\right)\leq
\exp \left(\frac{3K}{2\sqrt{n}}m \right)\leq
\exp \left(\frac{3K}{2}d_2\right)=:E.\end{equation} From (\ref{E}), it follows
that $p_n(i)\leq (\min_i p_n(i))E$, so that
$$1=\sum_{i=1}^m p_n(i)\leq  m (\min_i p_n(i)) E$$
and we obtain that for every $i=1,\ldots,m$, it holds
$$p_n(i)\geq (\min_i p_n(i)) \geq {1\over m E}\geq {1\over Ed_2
\sqrt{n}}.$$ Now, the variance can be estimated as follows. Let
$\bar{u}:=E[U_{(v)}|U_{(v)}\in \U]$. Without loss of generality, let
us assume that $\bar{u}(v,n)\leq u_o+4({m+1\over 2})=u_o+2m+2$. Then
for every $i\geq {3m\over 4}$, it holds that
$|u_o+4i-\bar{u}|=u_o+4i-\bar{u}\geq m-2$. Then
\begin{align*}
\v [U_{(v)}|U_{(v)}\in \U]&=\sum_{i=1}^m(u_o+4i-\bar{u})^2p_n(i)\geq
\sum_{i\geq {3 m\over 4}}^m(u_o+4i-\bar{u})^2p_n(i)\\ &\geq ({m\over
4}-1)(m-2)^2 {1\over Ed_2 \sqrt{n}}\geq {d^3_1\over 4 E
d_2}n.\end{align*}
\end{proof}
\paragraph{Condition 1).} The strategy is to look for a random mapping which satisfies assumptions {\bf A1, A2} and {\bf A3}, so that we check condition {\bf 1)} by applying Theorem \ref{gen2}. Recall that to apply Theorem \ref{gen2},  we additionally need that every point in the set $S_n\cap (\U\times \V)$ has to have sufficiently big probability so that the condition (\ref{LCLT}) is fulfilled. The following lemma, also  proven in the Appendix, shows that the defined sets $\U$ and $\V$ indeed satisfy this additional condition:
\begin{lemma}\label{lemmaa}
There exist an universal  constant $a>0$   and $n_4>n_3$ such that
for any $n>n_4$ and $(u,v) \in \mathcal{S}_n \cap (\U\times \V)$ it
holds
\[ P(U=u, V=v) \ge \frac{1}{a n}.\]
\end{lemma}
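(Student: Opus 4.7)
The plan is to read off $P(U=u,V=v)$ from the explicit formula (\ref{TZRdist}) and then invoke the multinomial local central limit theorem (Corollary \ref{multiclt}). Writing $v=(t,r)$ and $b_i:=b_{l_i}(u,v)$ for $i=1,2,3$, we have
\[
P(U=u,V=v) \;=\; \binom{t}{b_1\ b_2\ b_3}\, q_1^{b_1} q_2^{b_2} q_3^{b_3}\, p(r),
\]
which is exactly the trinomial mass function with $t$ trials and success probabilities $(q_1,q_2,q_3)$ evaluated at $(b_1,b_2,b_3)$, scaled by the deterministic factor $p(r)$. Since $r\in\{1,\dots,l+1\}$ on $\V$ and $p$ is non-increasing, the factor $p(r)\ge p(l+1)=q_3>0$ is bounded below by a positive constant independent of $n$, so the problem reduces to lower-bounding the trinomial factor by a constant times $1/n$.

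To apply Corollary \ref{multiclt} with $m=t$, I need $(b_1,b_2)$ to lie in a window of radius $\beta\sqrt{t}$ around $(tq_1,tq_2)$. Lemma \ref{RMpossible} supplies $|b_i - q_i n/\mu|\le \alpha\sqrt{n}$ for every $i$, and the definition of $\V$ supplies $|t - n/\mu|\le c\sqrt{n}$. A triangle inequality then yields
\[
|b_i - q_i t| \;\le\; |b_i - q_i n/\mu| + q_i\,|n/\mu - t| \;\le\; (\alpha + c)\sqrt{n}.
\]
Since $t\ge n/\mu - c\sqrt{n}\ge n/(2\mu)$ for $n$ large, we have $\sqrt{n}\le\sqrt{2\mu\,t}$, hence $|b_i - q_i t|\le \beta\sqrt{t}$ with $\beta:=(\alpha+c)\sqrt{2\mu}$. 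This is precisely the hypothesis of Corollary \ref{multiclt}, which then produces a constant $b'=b'(\beta)>0$ and a threshold $m_0=m_0(\beta)$ so that, whenever $t>m_0$,
\[
\binom{t}{b_1\ b_2\ b_3}\, q_1^{b_1} q_2^{b_2} q_3^{b_3} \;\ge\; \frac{1}{b'\, t}.
\]

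Finally, choosing $n_4>n_3$ large enough that both $t>m_0$ and $t\le n/\mu + c\sqrt{n}\le 2n/\mu$ hold uniformly for $n>n_4$ and $v\in\V$, we combine the two estimates to obtain
\[
P(U=u,V=v) \;\ge\; \frac{p(r)}{b'\,t} \;\ge\; \frac{q_3\mu}{2b'\,n} \;=:\; \frac{1}{a\,n},
\]
with $a:=2b'/(q_3\mu)$, which is the claim. I do not anticipate any genuine obstacle: the only substantive step is the triangle-inequality bookkeeping that transfers the Gaussian deviation window from the unconditional mean $q_i n/\mu$ to the conditional mean $q_i t$ of the trinomial with $t$ trials, after which Corollary \ref{multiclt} together with the uniform lower bound on $p(r)$ closes the argument.
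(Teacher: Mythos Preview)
Your proof is correct and follows essentially the same approach as the paper: express $P(U=u,V=v)$ via the explicit trinomial formula (\ref{TZRdist}), use Lemma \ref{RMpossible} and the definition of $\V$ together with the triangle inequality to place $(b_1,b_2)$ in a $\beta\sqrt{t}$-window around $(tq_1,tq_2)$ with $\beta=(\alpha+c)\sqrt{2\mu}$, invoke Corollary \ref{multiclt}, and finish with the uniform bound $p(r)\ge q_3$. The only cosmetic difference is in the last step: you convert $1/t$ to $1/n$ via the upper bound $t\le 2n/\mu$, whereas the paper uses the cruder (but equally valid) bound $t\le n$, yielding slightly different explicit constants $a$.
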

Let us finally introduce the random transformation $\mathcal{R}$: Take $z=(x,y) \in\mathcal{X}_n\times\mathcal{X}_n$. Then in $x$, we choose uniformly at random a block of length $l-1$ (among all the $b_{l-1}(x) \ge 1$ available blocks of length $l-1$) and turn it to a block of length $l$. At the same time and independent from our previous choice, we choose uniformly at random a block of length $l+1$ (among all the $b_{l+1}(x) \ge 1$ available blocks of length $l+1$) and also turn it to a block of length $l$. We do not perform any change in $y$. Following our initial convention, $\tilde{z}:=\mathcal{R}(z)=(\tilde{x},y) \in \mathcal{X}_n \times\mathcal{X}_n$ is the sequence after applying this transformation.
\begin{example}
As in a previous example with $l=3$ and $n=13$, let us take $x= 0011100111100$ such that $b_2(x)=2, b_3(x)=1$ and $b_4(x)=1$, with a rest at the end of length $r=2$. In $x$, there are only two blocks of length $l-1=2$ to pick from, each with probability $1/2$, namely $00$ (most left one) or $00$ (following most left one), and only one block of length $l+1=4$ to pick from, with probability $1$, namely $1111$. Let us suppose that $\mathcal{R}$ picks up $00$ (most left one) and $1111$, then $\tilde{x}$ will look like this $\tilde{x}=0001110011100$.
\end{example}
Note naturally that $b_l(\tilde{x})=b_l(x)+2$, $b_{l-1}(\tilde{x})=b_{l-1}(x)-1$ and $b_{l+1}(\tilde{x})=b_{l+1}(x)-1$, so that for $k_o=4$ the condition \eqref{shift} is satisfied.

\vspace{12pt}
\noindent
We will prove that $\mathcal{R}$ satisfies assumptions {\bf A3} and {\bf A2}, but we do not prove in this paper that $\mathcal{R}$ satisfies assumption {\bf A1}, because it would be too long (see 2.\ Remark after \eqref{impli}) and the proof deals with another issues of random sequences comparison, which are different from the fluctuations ones we try to be focused on along the present article. It means that our main result, i.e.\ Theorem \ref{LCSthm}, delivers the linear fluctuations result assuming that {\bf A1} is fulfilled. This is not restrictive, since in \cite{FT1} assumption {\bf A1} was already proven for the special case $q_1=q_2=q_3=1/3$ (as well as the linear fluctuations result). For the sake of completness, we include here the before mentioned result with our current notation:
\begin{lemma}\label{phdfirststep}
Let $q_1=q_2=q_3=1/3$. There exist $n_0 < \infty$ and a constant $\alpha \in (0,1)$ not depending on $n$ but on $l$, such that for every $n>n_0$ the event
\begin{equation*} \label{enbeta}
E[L(\tZ)-L(Z)|Z]\geq \e
\end{equation*}
happens with probability bigger than $1-\exp[-n^{\alpha}]$.
\end{lemma}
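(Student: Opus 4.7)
The plan is to establish this first-step inequality by constructing, for each outcome $z=(x,y)$, an explicit alignment of $\tZ=(\tilde{x},y)$ whose expected score---with expectation taken over the randomness in $\mathcal{R}$---exceeds $L(z)$ by at least a universal constant. Since the block-transformation shortens one $(l+1)$-block in $x$ by one bit and lengthens one $(l-1)$-block by one bit, leaving the total length, the number of blocks, and the overall color pattern unchanged, any alignment $\pi$ of $(x,y)$ can be naturally lifted to an alignment $\tilde{\pi}$ of $(\tilde{x},y)$ by performing two local modifications near the two altered blocks. Consequently, the score $U_{\tilde{\pi}}(\tZ)$ differs from $L(z)=U_{\pi}(z)$ only through local contributions near those two blocks, and it suffices to lower bound the expectation of this local gain.

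Next I would quantify the two contributions separately. For the $(l-1)\to l$ lengthening: a uniformly chosen $(l-1)$-block of $x$, after gaining one bit of the appropriate color, creates with positive probability a new matching opportunity with a nearby bit of $y$, since under the symmetric law $q_1=q_2=q_3=1/3$ the block of $y$ roughly opposite this location has length at least $l$ with probability $2/3$. For the $(l+1)\to l$ shortening: by the same symmetry, the bit removed from a uniformly chosen $(l+1)$-block of $x$ is, with probability bounded away from zero, an ``excess'' bit that was already unmatched in $\pi$ (because the corresponding block of $y$ has length at most $l$ with probability $2/3$). The symmetry $q_1=q_3$ ensures that the distribution of block lengths of $y$ is symmetric around $l$, which lets one dominate the expected loss from the shortening by the expected gain from the lengthening; combined with $q_2>0$, this yields an average strict improvement $\e_o>0$ on a set of $z$'s of large probability.

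Finally, the high probability bound $1-\exp[-n^{\alpha}]$ will follow from a Hoeffding/Azuma-type concentration applied to the quantity $E[L(\tZ)-L(Z)\mid Z]$, written as an average over the $\Theta(n)$ blocks of $x$ eligible for modification and the $\Theta(n)$ independent block-lengths of $y$, each contributing a bounded term. The main obstacle, and the reason this first step is genuinely difficult, is the rigorous implementation of the ``local surgery'' argument: optimal LCS alignments are highly non-local, and one must carefully analyze how a single $(l-1)$- or $(l+1)$-block embeds into $\pi$, argue that the surgery can be carried out without destroying the rest of the alignment, and show that the event ``the extra bit can be matched / the removed bit was unmatched'' has probability bounded away from zero uniformly in $n$. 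This case analysis, together with the equiprobability $q_1=q_2=q_3=1/3$ used to balance the competing contributions, is precisely the content of the corresponding argument in \cite{FT1} that we invoke here without reproducing.
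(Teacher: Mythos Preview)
The paper does not prove this lemma; it merely quotes the result from \cite{FT1} and, in the Remark immediately following it, sketches what that reference actually does. So in the end both you and the paper defer to \cite{FT1}, and in that narrow sense your proposal is aligned with the paper's treatment.

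That said, the heuristic outline you give does not match the structure of the argument in \cite{FT1} as described in the paper's Remark, and it contains a real gap. Your local-surgery picture (``the block of $y$ roughly opposite this location has length at least $l$ with probability $2/3$'') tacitly treats the $y$-block facing the modified $x$-block as random, but the expectation $E[L(\tZ)-L(Z)\mid Z]$ is taken over the randomness in $\mathcal{R}$ only, with $Z=(X,Y)$ fixed; the ``opposite'' $y$-block is determined by the optimal alignment of the fixed pair $(x,y)$ and is not an independent draw from the block-length law. To make your gain/loss balance rigorous one must first prove \emph{structural} statements about optimal alignments of typical pairs---for instance, that they leave out at most a fixed proportion $q_0>\tfrac{4}{9(l-1)}$ of blocks (this is exactly Lemma~4.2.1 in \cite{FT1}, as the paper's Remark points out), and a whole chapter of analogous combinatorial facts. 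Only once one controls how blocks of $x$ and $y$ are paired in an optimal alignment can one argue that a uniformly chosen $(l-1)$- or $(l+1)$-block typically sits in a configuration where the surgery yields the claimed bounded-below gain. Your concentration step via Hoeffding/Azuma is similarly premature: the per-block contributions to $E[L(\tZ)-L(Z)\mid Z]$ are functionals of a single global optimal alignment and are not a sum of independent or martingale-difference terms without further work.

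In short: your sketch identifies the right intuition (local surgery, gain versus loss balance, exponential concentration), and you correctly flag the non-locality of optimal alignments as the crux. But the missing ingredient---deterministic/structural control of optimal alignments with exponentially high probability---is precisely what \cite{FT1} supplies and what the paper's Remark singles out; without it the probabilities you quote are not available and the argument does not close.
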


\begin{remark}[for readers who want to dig in the details of \cite{FT1}]
As we have mentioned, the LCS of two sequences might have associated many different alignments, which we call optimal alignments. Lemma 4.2.1 in \cite{FT1} showed that the set of realizations of $X$ and $Y$ such that their optimal alignments leave out at most a proportion $q_0$ of blocks, where $q_0 > \frac{4}{9(l-1)}$, have probability exponentially close to one as $n \to \infty$. The entire chapter 4 in \cite{FT1} is dedicated to prove results of this type for a set of realizations of $X$ and $Y$ and their optimal alignments. Then, in Lemma 4.7.1 in \cite{FT1} it is shown Lemma \ref{phdfirststep} as in the following form: the set of realizations of $X$ and $Y$ such that their optimal alignments satisfy $E[L(\tZ)-L(Z)|Z]\geq \e_1$ has probability exponentially close to one as $n \to \infty$, for an arbitrary $\e_1>0$. It is important to note that in \cite{FT1}, all the extra conditions of Lemma 4.7.1 and the extra work through the chapter 4 and chapter 6 are devoted to relate the value of $\e_1$ with the smallest possible length of the blocks $l>0$ in order to get a sharp result for the order of the fluctuations of $L(Z)$. This relation is obtained in terms of an optimization problem which can be explicitely solved for this particular 3-multinomial model where $q_1=q_2=q_3=1/3$.
\\
The authors are working on a separate article about how to generalize Lemma \ref{phdfirststep} to the case $q_1,q_2,q_3 \in (0,1)$ such that $q_1+q_2+q_3=1$ (namely, the present and more general 3-multinomial block model).
\end{remark}

\paragraph{Assumption A3.} Recall that {\bf A3} presupposes that for any $(u,v)\in
\S\cap (\U\times \V)$, $b_{l-1}(u,v)\geq 1$ and $b_{l+1}(u,v)\geq
1$, which follows from \eqref{ulboundsbk}. The following lemma proves {\bf A3} :
\begin{lemma}\label{tilde}
Let   $(u,v)\in \S$ be such that $b_{l-1}(u,v)\geq 1$ and
$b_{l+1}(u,v)\geq 1$. If $Z \sim P_{(u,v)}$, then $\tilde{Z} \sim
P_{(u+4,v)}$.
\end{lemma}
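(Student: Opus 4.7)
The proof closely parallels that of Lemma \ref{tilde2}, with the block transformation replacing the single-letter transformation. The key preliminary observation is that $U$ and $V=(T,R)$ are functions of $X$ alone while $\mathcal R$ leaves $Y$ untouched; combined with $X\perp Y$, this yields the factorizations $P(Z=(x,y)\mid U=u,V=v)=P(X=x\mid U=u,V=v)\,P(Y=y)$ and $P(\tilde Z=(\tilde x,\tilde y)\mid Z=(x,y))=P(\tilde X=\tilde x\mid X=x)$ when $\tilde y=y$ (and $0$ otherwise). This reduces the claim to the following $X$-marginal statement: under the conditional law of $X$ given $\{U=u,V=v\}$, the distribution of $\tilde X$ coincides with the conditional law of $X$ given $\{U=u+4,V=v\}$. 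The hypothesis $b_{l-1}(u,v),b_{l+1}(u,v)\geq 1$ is exactly what guarantees that $\mathcal R$ is well-defined on the support of $P_{(u,v)}$.

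Next I would set up two combinatorial counts and apply total probability. Let $\mathcal A(x)$ be the set of $\tilde x$ reachable from $x$ by $\mathcal R$; since the transformation picks, independently and uniformly, one of the $b_{l-1}(x)$ blocks of length $l-1$ and one of the $b_{l+1}(x)$ blocks of length $l+1$, we have $|\mathcal A(x)|=b_{l-1}(x)\,b_{l+1}(x)$ and $P(\tilde X=\tilde x\mid X=x)=(b_{l-1}(x)\,b_{l+1}(x))^{-1}$ on $\mathcal A(x)$. Dually, $\mathcal B(\tilde x):=\{x:\tilde x\in\mathcal A(x)\}$ is obtained by picking two distinct $l$-blocks of $\tilde x$ (one to be shortened to length $l-1$, one to be lengthened to $l+1$), so $|\mathcal B(\tilde x)|=b_l(\tilde x)(b_l(\tilde x)-1)$. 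Total probability then gives
\begin{equation*}
P(\tilde X=\tilde x\mid U=u,V=v)=\sum_{x\in\mathcal B(\tilde x)}\frac{P(X=x\mid U=u,V=v)}{b_{l-1}(x)\,b_{l+1}(x)}.
\end{equation*}
The sum vanishes unless $(b_{l-1}(\tilde x),b_l(\tilde x),b_{l+1}(\tilde x))=(b_1-1,b_2+2,b_3-1)$, where $b_i:=b_{l_i}(u,v)$; equivalently, unless $T(\tilde x)=t$, $R(\tilde x)=r$ and $U(\tilde x)=u+4$. In the non-trivial case every $x\in\mathcal B(\tilde x)$ shares the block counts $(b_1,b_2,b_3)$, so by the bijection $(T,U,R)\leftrightarrow(B_{l-1},B_l,B_{l+1})$ and \eqref{probXcond} each contributes the same value $\tfrac12\binom{t}{b_1,b_2,b_3}^{-1}$. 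The elementary factorial identity
\begin{equation*}
\frac{(b_2+2)(b_2+1)}{b_1\,b_3}\cdot\frac{b_1!\,b_2!\,b_3!}{t!}=\frac{(b_1-1)!\,(b_2+2)!\,(b_3-1)!}{t!}
\end{equation*}
then identifies the sum as $\tfrac12\binom{t}{b_1-1,b_2+2,b_3-1}^{-1}$, which by \eqref{probXcond} is precisely $P(X=\tilde x\mid U=u+4,V=v)$.

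The only step that requires genuine care is the count $|\mathcal B(\tilde x)|=b_l(\tilde x)(b_l(\tilde x)-1)$: the two $l$-blocks of $\tilde x$ involved play distinguishable roles (one was originally of length $l-1$, the other of length $l+1$), so \emph{ordered} pairs of distinct blocks must be counted. Everything else is routine bookkeeping, entirely analogous to the proof of Lemma \ref{tilde2}.
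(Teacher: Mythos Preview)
Your proof is correct and follows essentially the same route as the paper's: reduce to the $X$-marginal via independence, use the uniform conditional law \eqref{probXcond}, compute the forward transition probability $1/(b_1b_3)$, count $|\mathcal B(\tilde x)|$, and finish with the factorial identity. Your justification of $|\mathcal B(\tilde x)|=b_l(\tilde x)(b_l(\tilde x)-1)$ via \emph{ordered} pairs of distinct $l$-blocks is in fact cleaner than the paper's, which writes the same number as $2\binom{b_l(\tilde x)}{2}$ but attributes the factor $2$ to the block colour rather than to the assignment of which block was formerly of length $l-1$ and which of length $l+1$.
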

\begin{proof} The random variables $U$, $T$ and $R$ are independent
of $Y$. Hence, $P_{(u,v)}=P^x_{(u,v)}\times P^y$, where
$P^x_{(u,v)}$ is the conditional distribution of $X$ given
$\{(U,V)=(u,v)\}$, $P^y$ is the law of $Y$ (actually $P^x=P^y$) and
$\times$ stands for the product measure. Also the block
transformation applies to $X$, only. Thus, it suffices to show that
if $X\sim  P^x_{(u,v)}$, then $\tilde{X} \sim P^x_{(u+4,v)}$. For
proving this, we follow the approach in Lemma \ref{tilde2}.\\
The first step of the proof is to explicitly compute an expression
for $P^x_{(u,v)}=P^x_{(u,t,r)}$. By \eqref{probXcond}, we have that
for any $x \in S(b_{l-1}(u,v),b_l(u,v),b_{l+1}(u,v))$ it holds:
\begin{eqnarray}
P^x_{(u,v)}(x) &=& P(X=x | U=u, T=t, R=r) \nonumber\\
&=& P(X=x | B_{l-1}=b_{l-1}(u,v),B_l=b_l(u,v),B_{l+1}=b_{l+1}(u,v)) \nonumber\\
&=&\!\!\!\! {1\over 2}{t \choose b_{l-1}(u,v)\;b_l(u,v)\;b_{l+1}(u,v)}^{-1}. \label{distXtzr}
\end{eqnarray}
The second step of the proof is to actually compute the distribution
of $\tilde{X}$. For that, we need to investigate the effect of the
block-transformation on the distribution of $X$. Let us fix $x \in
\mathcal{X}$ and denote $b_1:=b_{l-1}(x), b_2:=b_l(x)$ and
$b_3:=b_{l+1}(x)$, and $(u,v)$ its corresponding triple. Let us
define $\mathcal{A}(x)$ the set of all strings that are possible
outcomes after applying the block transformation to $x$, namely if
$\tilde{x} \in \mathcal{A}(x)$ then necessarily
$$b_{l-1}(\tilde{x})=b_1-1, b_l(\tilde{x})=b_2+2,
b_{l+1}(\tilde{x})=b_3-1.$$ However,  not every string $y \in \X_n$
such that $b_{l-1}(y)=b_1-1, b_l(y)=b_2+2$ and $b_{l+1}(y)=b_3-1$
belongs to $\mathcal{A}(x)$. By using \eqref{linearTZ}, it is
straightforward to see that triple
$(b_{l-1}(\tilde{x}),b_l(\tilde{x}),b_{l+1}(\tilde{x}))$ corresponds
to the triple $(u+4,v)$. Since the block-transformation picks up
blocks uniformly, then after applying it to $x$, every element of
$\mathcal{A}(x)$ has the same probability to occur. Formally,
\begin{equation}\label{blocktrans}
P(\tilde{X} = \tilde{x} | X=x) = \left\{  \begin{array}{rl} \theta &\mbox{if $\tilde{x} \in \mathcal{A}(x)$} \\ 0 & \mbox{otherwise} \end{array}\right.
\end{equation}
where $\theta \in (0,1]$ is a constant which depends on $x$ only
through $(b_1,b_2,b_3)$ (or equivalently through $(u,v)$). The
block-transformation only changes blocks of length $l-1$ and $l+1$,
so $\theta=1/(b_1 \cdot b_3)$. The last ingredient is to define the
set $\mathcal{B}(\tilde{x}):=\{ x \in \mathcal{X} : \tilde{x} \in
\mathcal{A}(x)\}$. The cardinality of this set is
\[ | \mathcal{B}(\tilde{x})| = 2 { b_l(\tilde{x}) \choose 2} \]
because, after the block-transformation, each block of length $l$
could have eventually came from two previous shorter or longer
blocks, so the ${ b_l(\tilde{x}) \choose 2}$, and can be either of
1's or of 0's, so the 2.

\noindent To end the proof, let us consider $X \sim P_{(u,v)}$.
Then, we get the corresponding triple
$$(b_{l-1}(u,v),b_l(u,v),b_{l+1}(u,v))$$ depending only on $(u,v)$.
To keep the notation light in what follows, let us call
$b_1^*:=b_{l-1}(u,v), b_2^*:=b_l(u,t,v)$ and $b_3^*:=b_{l+1}(u,v)$.
We aim to find $P(\tilde{X}=\tilde{x})=P(\tilde{X} = \tilde{x} |
U=u,T=t,R=r).$ Note that for every $\tilde{x}\in \X_n$, there are
two possibilities: either ${\cal{B}}(\tilde{x})\cap
S(u,v)=\emptyset$ or
$${\cal{B}}({\tilde{x}})\subset S(u,v).$$
The second case holds if and only if
$$b_{l-1}(\tilde{x})=b^*_1-1, b_l(\tilde{x})=b^*_2+2,
b_{l+1}(\tilde{x})=b^*_3-1$$ or, equivalently $ \tilde{x}\in
S(u+4,v)$. In this case, by \eqref{distXtzr} and \eqref{blocktrans}
we have: {\small
\begin{eqnarray*}
P(\tilde{X} = \tilde{x} | U=u,T=t,R=r) &=& \sum_{x \in \mathcal{B}(\tilde{x})} P(\tilde{X}=\tilde{x} | X=x)P(X=x | U=u,T=t,R=r) \\
&=& \sum_{x \in \mathcal{B}(\tilde{x})} \frac{1}{b_1^* b_3^*}P(X=x | U=t,T=t,R=r) \\
&=& \frac{1}{2 b_1^* b_3^*} { t \choose b_1^* \; b_2^*\;b_3^*}^{-1} |\mathcal{B}(\tilde{x})| \\
&=& \frac{1}{2 b_1^* b_3^*} { t \choose b_1^* \; b_2^*\;b_3^*}^{-1} 2\, { b_2^*+2 \choose 2 } \\
&=& {1\over 2}\, { t \choose b_1^*-1 \;\; b_2^*+2\;\;b_3^*-1}^{-1}\\
&=& P^x_{(u+4,v)}(\tilde{x}).
\end{eqnarray*}}
\end{proof}

\paragraph{Assumption A2.} This assumption means that in the worse case, the length of the LCS decreases in $A$ units after the block-transformation. Let $z=(x,y)$ be a realization of $Z$, $w_{l-1}$ be the block of length $l-1$ and $w_{l+1}$ be the block of length $l+1$ in $x$ that $\mathcal{R}$ has chosen. Note that the decrease in the length of the LCS comes from the following fact: if in every optimal alignment producing the LCS of $x$ and $y$ all the bits of $w_{l+1}$ are aligned, then it is clear that deleting one bit of $w_{l+1}$ will decrease the length of the LCS of $x$ and $y$ only by 1. Later, by adding a new bit in $w_{l-1}$, we cannot get an even lower length of the LCS of $x$ and $y$ (in the worst case, we stay the same). Therefore, $A=1$.
\begin{example}
Take $l=2$, $x=11100010101101$ and $y=11101100101000$. Then $L_{14}(x,y)=10$ could be represented by the alignment
\[\begin{array}{c|c|c|c|c|c|c|c|c|c|c|c|c|c|c|c|c|c|c|c|c}
x &&1&1&1&0&0&0&-&1&-&0&1&0&1&1&0&-&-&1&- \\ \hline
y&&1&1&1&0&-&-&1&1&0&0&1&0&1&-&0&0&0&-&0 \\ \hline
L_{14}(x,y)&&1&1&1&0&&&&1&&0&1&0&1&&0&&&
\end{array}\]
Suppose that $\mathcal{R}$ deletes from the block $w_3=111$ (first block, from left to right) one symbol. The minimum gain for the length of an LCS is when $\mathcal{R}$ adds the extra symbol either to the fifth block in $x$ of length one (from the left to the right):
\[\begin{array}{c|c|c|c|c|c|c|c|c|c|c|c|c|c|c|c|c|c|c|c|c|c}
x &&1&1&-&0&0&0&-&1&-&0&1&1&0&1&1&0&-&-&1&- \\ \hline
y&&1&1&1&0&-&-&1&1&0&0&1&-&0&1&-&0&0&0&-&0 \\ \hline
L_{14}(\tilde{x},y)&&1&1&&0&&&&1&&0&1&&0&1&&0&&&
\end{array}\]
or to the sixth block in $x$ of length one (from the left to the right):
\[\begin{array}{c|c|c|c|c|c|c|c|c|c|c|c|c|c|c|c|c|c|c|c|c|c}
x &&1&1&-&0&0&0&-&1&-&0&1&0&0&1&1&0&-&-&1&- \\ \hline
y&&1&1&1&0&-&-&1&1&0&0&1&-&0&1&-&0&0&0&-&0 \\ \hline
L_{14}(\tilde{x},y)&&1&1&&0&&&&1&&0&1&&0&1&&0&&&
\end{array}\]
In both cases, we get $L_{14}(\tilde{x},y)=9$.
\end{example} 

\vspace{12pt}
\noindent We now state the main theorem of the section, about the linear fluctuations of the length of the LCS in the 3-multinomial block model:
\begin{theorem} \label{LCSthm}
Assume that the block-transformation satisfies ${\bf
A 1}$. Then there exists an universal constant $b>0$ and
$n_6<\infty$ so that for every $n>n_6$, it holds
\begin{equation}\label{varbound}
\v[L(Z)]\geq b \cdot n.
\end{equation}
\end{theorem}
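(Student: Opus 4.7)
The plan is to assemble the theorem directly from the machinery developed in Section \ref{sec:LCS}: first verify the hypotheses of Theorem \ref{gen2} to obtain condition \textbf{1)} of Theorem \ref{gen1}, then apply Corollary \ref{cor1} with $\U = \U^c$ and $\V = \V^c$ to obtain the linear lower bound. Since $\U(v) = \U \cap \S(v)$ by construction, Corollary \ref{cor1} is the appropriate tool.

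First I would verify the hypotheses of Theorem \ref{gen2}. Assumption \textbf{A1} holds by hypothesis. Assumption \textbf{A2} was checked above with $A=1$ (deleting one bit from a block of length $l+1$ can cost at most one matched letter, while adding one bit to a block of length $l-1$ cannot decrease the LCS length). Assumption \textbf{A3} is exactly the content of Lemma \ref{tilde}, whose preconditions $b_{l-1}(u,v)\geq 1$ and $b_{l+1}(u,v)\geq 1$ are guaranteed for $(u,v)\in \S\cap(\U\times\V)$ by Lemma \ref{RMpossible}. The remaining lower bound \eqref{LCLT} on $P(U=u,V=v)$ is precisely Lemma \ref{lemmaa}. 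Hence Theorem \ref{gen2} furnishes some $n_5<\infty$ such that for all $n>n_5$ condition \textbf{1)} of Theorem \ref{gen1} holds with $\delta = \e_o/2$.

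Next I would verify the remaining conditions of Theorem \ref{gen1}. Condition \textbf{3)} holds with $k_o = 4$: as explained after Lemma \ref{RMpossible}, Lemma \ref{RMpossible} implies that each fiber $\U(v)$ has the lattice form \eqref{um}, so consecutive atoms are exactly $4$ apart. Condition \textbf{2)} is the content of Lemma \ref{variance}, which provides a constant $d>0$ and an integer $n_3$ with $\v[U_{(v)}\mid U_{(v)}\in \U(v)] \geq d\,n$ for all $n>n_3$ and all $v\in\V$, so we may take $\psi_n = d\,n$.

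Finally I would invoke Corollary \ref{cor1}. Setting $n_6 := \max(n_3,n_5)$ and $n>n_6$, we obtain
\begin{equation*}
\v[L(Z)] \;\geq\; \left(\frac{\e_o/2}{4}\right)^{2} \cdot d\,n \cdot P(U\in\U,\, V\in\V).
\end{equation*}
By Lemma \ref{lemma09}, $P(U\in\U,\, V\in\V) \geq 0.9$ for all $n>n_0$, so enlarging $n_6$ if necessary yields \eqref{varbound} with $b = \tfrac{9\,\e_o^2\,d}{640}$. Since every ingredient is already proved, no step requires further calculation; the only conceptual obstacle is the careful identification of $(U,V)$ with $(T,U,R)$ and the verification that $\U(v) = \U\cap \S(v)$ is of the lattice form needed to make condition \textbf{3)} and Corollary \ref{cor1} applicable, and this has already been done in Subsection \ref{subsec:u}.
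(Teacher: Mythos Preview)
Your proof is correct and follows essentially the same route as the paper's own argument: verify \textbf{A1}--\textbf{A3} and \eqref{LCLT} via Lemmas \ref{RMpossible}, \ref{tilde}, and \ref{lemmaa} to invoke Theorem \ref{gen2}, then feed the resulting $\delta=\e_o/2$ together with $k_o=4$, $\psi_n=dn$ (Lemma \ref{variance}), and $P(U\in\U,V\in\V)\geq 0.9$ (Lemma \ref{lemma09}) into the general variance bound. The only cosmetic difference is that you cite Corollary \ref{cor1} explicitly where the paper refers to Theorem \ref{gen1}; since $\U(v)=\U\cap\S(v)$ here, these are the same statement, and you arrive at the identical constant $b=9\e_o^2 d/640$.
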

\begin{proof} Let us first check the assumptions of Theorem
\ref{gen2}. {\bf A1} holds by hypothesis; our
block-transformation is such that ${\bf A2}$ holds with $A=1$ (as discussed above). Let
now $n_4$ be as in Lemma \ref{lemmaa}. Recall that
$n_0<n_1<n_2<n_3<n_4<\infty$. Hence, for any $n>n_4$,
(\ref{ulboundsbk}), (\ref{varlin}) and (\ref{LCLT}) hold. Moreover,
from Lemma \ref{lemma09}, it holds $P(U\in \U, V\in \V)\geq 0.9.$
The condition (\ref{ulboundsbk}) states that for every $(u,v)\in
\S\cap \{\U \times \V\}$, we have that $b_{l-1}(u,v)\geq 1$ and
$b_{l+1}(u,v)\geq 1$. Hence, the block-transformation is possible,
and Lemma \ref{tilde} now establishes {\bf A3}. Therefore, for every
$n>n_4$, the assumptions of Theorem \ref{gen2} are fulfilled and,
therefore, there exists $n_5>n_4$ so that for for every $n>n_5$, the
assumptions of Theorem \ref{gen1} hold with $\delta={\e_0\over
2}.$\\
We now apply Theorem \ref{gen1}. As just showed, the assumption {\bf
1}) holds for any $n>n_5$; as explained at the beginning of Subsection \ref{subsec:u}, the
assumption {\bf 3)} holds with $k_o=4$. By (\ref{varlin}), $\psi_n=d
n$. By Theorem \ref{gen1}, thus, the lower bound (\ref{varbound})
exists with
$$b={9 \epsilon_o^2 d \over 640}.$$
\end{proof}
\begin{center}
\bf \large Acknowledgments
\end{center}
F.T. would like to thank the Estonian Science Foundation through the Grant nr.\ 9288 and targeted financing project SF0180015s12 for making possible a two weeks research stay at Tartu University visiting J.L. while working in the core of this article, as well as the DFG through the SFB 878 at University of M\"unster for financial support while the research stay. J.L would like to thank the Estonian Science Foundation through the Grant nr.\ 9288 and targeted financing project SF0180015s12 for supporting a short research stay at University of M\"unster while the finishing of this article.
\appendix
\section{Appendix}
\begin{proposition}\label{propCLTnl} Given $\epsilon >0$ there exist a constant $c'(\e)$  not depending on $n$ but on $\epsilon$ and $n_0(\e)< \infty$  such
that for every $n > n_0$
\begin{equation}\label{CLTnl}
P\left(\left| \frac{B_{l-1} -q_1\frac{n}{\mu }}{\sqrt{n}} \right| \le  c', \quad \left| \frac{B_l -q_2 \frac{n}{\mu }}{\sqrt{n}} \right| \le  c',
\quad \left| \frac{B_{l+1} -q_3\frac{n}{\mu}}{\sqrt{n}} \right| \le c' \right) \ge 1-\epsilon.
\end{equation}
\end{proposition}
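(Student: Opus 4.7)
The approach is to exploit the renewal structure of the block construction. Recall that $X$ is the truncation to length $n$ of a concatenation of i.i.d.\ blocks of lengths $W_1,W_2,\dots$ taking values $l-1,l,l+1$ with probabilities $q_1,q_2,q_3$. Write $\mu:=(l-1)q_1+lq_2+(l+1)q_3$ (the mean block length) and $\sigma_W^2:=\mathrm{Var}(W_1)<\infty$. Then $T:=B_{l-1}+B_l+B_{l+1}$ is the number of \emph{complete} blocks contained in $X$, and $B_{l_i}=\sum_{k=1}^T \mathbf{1}\{W_k=l_i\}$. The entire proof rests on the decomposition
\[
B_{l_i}-q_i\tfrac{n}{\mu} \;=\; \bigl(B_{l_i}-q_iT\bigr) \;+\; q_i\bigl(T-\tfrac{n}{\mu}\bigr),
\]
together with a union bound over $i\in\{1,2,3\}$ and over the four bad events appearing below. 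Each summand will be shown to be $O(\sqrt{n})$ in probability.

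I would handle the second summand first, by treating $T$ as a renewal count. The sandwich $S_T\le n<S_{T+1}$ (with $S_k:=W_1+\cdots+W_k$) combined with $W_{T+1}\le l+1$ gives $|T\mu-n|\le|S_T-T\mu|+(l+1)$. Since each block has length at least $l-1$, we have the deterministic bound $T\le N:=\lfloor n/(l-1)\rfloor$. Applying Kolmogorov's maximal inequality to the random walk $S_k-k\mu$ up to the fixed index $N$ yields
\[
P\Bigl(\max_{1\le k\le N}|S_k-k\mu|>c\sqrt{n}\Bigr) \;\le\; \frac{N\sigma_W^2}{c^2n} \;\le\; \frac{\sigma_W^2}{c^2(l-1)},
\]
which, after dividing by $\mu$ and choosing $c$ large enough, produces a constant $c_1=c_1(\epsilon)$ with $P(|T-n/\mu|>c_1\sqrt{n})\le\epsilon/4$ for all $n$ large.

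For the first summand, fix $i$ and set $Y_k^{(i)}:=\mathbf{1}\{W_k=l_i\}-q_i$. These are i.i.d., zero-mean, bounded by $1$, with variance $q_i(1-q_i)\le 1/4$, and the partial sums $M_k^{(i)}:=Y_1^{(i)}+\cdots+Y_k^{(i)}$ satisfy $B_{l_i}-q_iT=M_T^{(i)}$. Using $T\le N$ once more, Kolmogorov's inequality gives
\[
P\bigl(|M_T^{(i)}|>c_2\sqrt{n}\bigr) \;\le\; P\Bigl(\max_{k\le N}|M_k^{(i)}|>c_2\sqrt{n}\Bigr) \;\le\; \frac{Nq_i(1-q_i)}{c_2^2n} \;\le\; \frac{1}{4c_2^2(l-1)},
\]
which is $\le\epsilon/4$ once $c_2=c_2(\epsilon)$ is chosen large enough, uniformly in $i$. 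Setting $c':=c_1+c_2$ and union-bounding over the four bad events (one for $T$, three for the $M_T^{(i)}$) delivers \eqref{CLTnl}. Conceptually, the only step with any content is the renewal-count tightness of $T$ in the second paragraph, where one must carefully distinguish the random index $T$ from the deterministic upper bound $N$; the rest is just Kolmogorov's inequality for i.i.d.\ bounded sums, so I do not expect any serious obstacle.
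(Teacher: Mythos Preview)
Your argument is correct and somewhat cleaner than the paper's. Both proofs rest on the same two-stage idea: first control the total block count $T=B_{l-1}+B_l+B_{l+1}$ around $n/\mu$, then control each $B_{l_i}$ around $q_iT$. The difference lies in the technical execution. The paper fixes a \emph{high-probability} upper bound $m(\beta,n)=n/\mu+\beta\sqrt{n}$ for $T$ (established via Hoeffding's inequality on the partial sums $W_1+\cdots+W_m$), then applies Chebyshev to the indicator sum $\xi_1+\cdots+\xi_m$ at that fixed index, and finally argues the one-sided inclusion $A\cap B\subset C$; the other tail is handled by symmetry. You instead use the \emph{deterministic} bound $T\le N=\lfloor n/(l-1)\rfloor$ and Kolmogorov's maximal inequality to control $\max_{k\le N}|S_k-k\mu|$ and $\max_{k\le N}|M_k^{(i)}|$ simultaneously for both tails, avoiding the one-sided reduction and the separate Hoeffding step. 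Your explicit decomposition $B_{l_i}-q_in/\mu=(B_{l_i}-q_iT)+q_i(T-n/\mu)$ also makes the structure more transparent than the paper's event-inclusion argument. Both approaches yield non-asymptotic bounds with explicit constants; yours is slightly more economical but neither is materially stronger.
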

\begin{proof}
It suffices to show that for every $\e>0$ there exists
$\gamma_i(\e)>0$ $i=1,2,3$ so that
\begin{equation}\label{all}
P\left(\big|\frac{B_{l_i} - q_i\frac{n}{l}}{\sqrt{n}} \big|\le \gamma_i\right) \ge 1-\epsilon,
\end{equation}
for $i=1,2,3$ and $l_1:=l-1$, $l_2:=l$ and $l_3:=l+1$. From
(\ref{all}) the bound (\ref{CLTnl}) trivially follows by taking
$$c'(\e):=\max_i\big\{\gamma_1({\e\over 3}),\gamma_2({\e\over 3}),\gamma_3({\e\over
3})\big\}.$$ Even more, we shall only show the existence of
one-sided bound for $B_l$: for every $\e$, \label{CLTall} there
exists $\gamma(\e)$ so that
\begin{equation}\label{CLTnls}
P\left(\frac{B_l - q_2\frac{n}{\mu}}{\sqrt{n}} \le \gamma(\e)\right)
\ge 1-\epsilon .
\end{equation}
The other side follows from the same arguments. Since in this proof,
we consider $q_2$, only, let for that proof $q:=q_2$. Let
$\alpha,\beta,\gamma$ be positive real numbers and $m\in
\mathbb{N}$. We define the random variables $\xi_i$ and  events
$A(\alpha,m), B(\beta,n)$ and $C(\gamma,n)$ as following:
\begin{eqnarray}
\xi_i &:=& \left\{\begin{array}{cl}1&\mbox{if $W_i=l$} \\ 0& \mbox{otherwise} \end{array}\right. \nonumber \\
A(\alpha,m) &:=& \left\{ \xi_1+\cdots+\xi_m \le q m + \alpha \sqrt{m} \right\} \nonumber \\
B(\beta,n) &:=& \left\{ B_{l-1}+B_l+B_{l+1} \le \frac{n}{\mu }+\beta \sqrt{n}\right\} \nonumber \\
C(\gamma,n) &:=& \left\{ B_l \le 3 {n\over \mu} + \gamma\sqrt{n}\right\}. \nonumber
\end{eqnarray}
Note that for any $m$, the event  $B_{l-1}+B_l+B_{l+1}>m$ implies
that in the sequence $X$, there are more than $m$ blocks. That, in
turn, means that the $m$ first blocks cover less than $n$ bits of
$X_1,X_2,\ldots$ or, equivalently, $W_1+\cdots+W_m \leq  n$.  Let
$$m(n,\beta):={n\over \mu}+\beta \sqrt{n}.$$ Note that
${n\over m}-\mu<0$. Thus
\begin{eqnarray}
P(B^c(\beta,n))&\le& P\left( W_1+\cdots+W_{m} \le n\right) \nonumber\\
&=& P\left( \frac{W_1+\cdots+W_{m}}{m}-\mu \le \frac{n}{m}-\mu\right) \nonumber\\
\mbox{(by \eqref{corAH} with $\mathrm{P}(|W_1 - \mu| \le 2)=1$)}&\le& \exp\left( -\frac{m}{8}\left(\frac{n}{m} - \mu \right)^2\right) \nonumber\\
&=&\exp\left(-\frac{\mu^3\beta^2}{8}\big({1\over 1+{\beta \mu
\over{\sqrt{n}}}}\big)\right). \label{betamagico}
\end{eqnarray}
Now, for any $\e>0$, we can find $\beta_o=\beta(\e)$ so big that
$\exp[-{\mu^3\beta^2_o\over 2\cdot 8}]<{\e\over 2}$. An then, one
can take $n_0(\e)$ so big that ${\mu \beta_o\over \sqrt{n_o}}<1$.
Hence, for any $n>n_o$,
\begin{equation}\label{b}
P(B^c(n,\beta_o))<{\e\over 2}.
\end{equation}
Let
$$\alpha_0:=\sqrt{2q(1-q)\over \e}.$$
Then by \eqref{chevi}, for any $m$
\begin{equation}\label{alphamagico}
P(A^c(\alpha_0,m)) = P\left( \frac{\xi_1 + \cdots +\xi_{m}}{m}-q \ge \frac{\alpha_0}{\sqrt{m}} \right) \le \frac{\epsilon}{2}.
\end{equation}
Finally, if we define
\[ \gamma(\beta,\alpha_0):= q \beta+\alpha_0 \sqrt{\frac{1}{\mu}+\beta}\]
then we have
\[\left.\begin{array}{c}
\xi_1+\dots+\xi_{m(\beta,n)}\le q{m(\beta,n)}+\alpha_0\sqrt{m(\beta,n)} \\
B_{l-1}+B_l+B_{l-1} \le m(\beta,n)
\end{array}\right\} \Rightarrow B_l \le q{m(\beta,n)}+\alpha_0\sqrt{m(\beta,n)}\leq  q \frac{n}{\mu} + \gamma\sqrt{n}.\]
Therefore
\begin{equation}\label{inclusion}
A(\alpha_0,m) \cap B(\beta_o,n) \subseteq C(\gamma,n).
\end{equation}
Now take $n>n_0$, $m_0:=m(\beta_0,n)$ and
$\gamma_0:=\gamma(\beta_0,\alpha_0)$ and use
 \eqref{alphamagico}, \eqref{betamagico}
and \eqref{inclusion} to get
\begin{equation}\label{comineq}
P\left(\frac{B_l - q \frac{n}{\mu }}{\sqrt{n}} >  \gamma_0\right) = P(C^c(\gamma_0,n)) \le P(A^c(\alpha_0,m_0))+P(B^c(\beta_0,n))\le \epsilon.
\end{equation}
\end{proof}
\paragraph{Proof of Lemma \ref{lemma09}.}
By Proposition \ref{propCLTnl}, for any $\epsilon>0$ there exist
$c'=c'(\epsilon)>0$ and $n_0(\e) <\infty$ such that:
{\footnotesize\begin{eqnarray*} P\left(
\left|\frac{B_{l-1}+B_l+B_{l+1} -\frac{n}{\mu}}{\sqrt{n}}\right| >
3c' \right) & \le
& P\left( \left| \frac{B_{l-1}-q_1\frac{n}{\mu}}{\sqrt{n}}\right| + \left| \frac{B_l-q_2\frac{n}{\mu}}{\sqrt{n}}\right| + \left| \frac{B_{l+1}-q_3\frac{n}{\mu}}{\sqrt{n}}\right| > 3c'\right)\\
& \le & \sum_{i=1}   P\left( \left| \frac{B_{l_i}-q_i\frac{n}{\mu}}{\sqrt{n}}\right| > c' \right) \le 3\epsilon\\
P\left( \left|\frac{B_l-B_{l-1}-B_{l+1} -\frac{n}{\mu}(q_2-q_1-q_3)}{\sqrt{n}}\right|> 3c' \right) &\le& P\left( \left| \frac{B_{l-1}-q_1\frac{n}{\mu}}{\sqrt{n}}\right| +
\left| \frac{B_l-q_2\frac{n}{\mu}}{\sqrt{n}}\right| + \left| \frac{B_{l+1}-q_3\frac{n}{\mu}}{\sqrt{n}}\right| > 3c'\right)\\
& \le & \sum_{i=1} P\left( \left| \frac{B_{l_i}-q_i\frac{n}{\mu}}{\sqrt{n}}\right| > c' \right) \le 3\epsilon\\
\end{eqnarray*}}
\noindent for any $n>n_0$, where as before $l_1=l-1,l_2=l$ and
$l_3=l+1$. Then, it directly follows:
\[P\big(U\notin \U^{3c'}, V\notin \V^{3c'}\big) \ge 6\epsilon\]
from where the proof is completed by taking $c:=3c'$ and
$\epsilon=1/60$.\eop

\paragraph{Proof of Lemma \ref{RMpossible}.}
Let $n>n_0$. Take $(u,v) \in \mathcal{S}_n \cap (\mathcal{U}_n
\times \mathcal{V}_n)$ (By lemma \ref{lemma09}, the set
$\mathcal{S}_n \cap (\mathcal{U}_n \times \mathcal{V}_n)$ is not
empty). In particular,
\[t= \frac{n}{\mu} + \sigma_1, \quad u=\frac{n}{\mu}(q_2-q_1-q_3)+\sigma_2,\quad r \in \{1,\dots,\l+1\},\]
for $\sigma_1,\sigma_2 \in [-c\sqrt{n},c\sqrt{n}\,]$. Let us start
by showing that $b_{l_i}(u,t,r) \ge 1$. Recall $\mu=l+q_3-q_1$. From
\eqref{linearTZ} and $1 \leq r <l+1$ we get
\begin{eqnarray*}
b_{l-1}(u,t,r) &=& q_1\frac{n}{\mu} + \frac{2\sigma_1l+(\sigma_1-\sigma_2)+2r }{4} \\
&\ge& q_1\frac{n}{\mu} - \frac{c(l+1)}{2}\sqrt{n} \ge 1
\end{eqnarray*}
provided $n > n_{1,1}(c)$. Also, by using \eqref{linearTZ} and $1 <r
<l+1$ we get
\begin{eqnarray*}
b_l(u,t,r) = q_2\frac{n}{\mu} + \frac{\sigma_1+\sigma_2}{2} &\ge&
q_2\frac{n}{\mu} - c\sqrt{n} \ge 1
\end{eqnarray*}
provided $n > n_{1,2}(c)$. Finally, also by using \eqref{linearTZ}
and $1 \leq r <l+1$ we get
\begin{eqnarray*}
b_{l+1}(u,t,r) &=& q_3\frac{n}{\mu} - \frac{2\sigma_1l+(\sigma_2-\sigma_1)+2r }{4} \\
&\ge& q_3\frac{n}{\mu} - \frac{c(l+1)}{2}\sqrt{n}-\frac{l-2}{2} \ge
1
\end{eqnarray*}
provided $n > n_{1,3}(c).$ So, for having simultaneously the three
lower bounds we need to take $n>n_1^1:=\max\{
n_{1,1},n_{1,2},n_{1,3}\}$.

\noindent For the absolute value bounds, we proceed in the same way:
\begin{eqnarray*}
\left| b_{l-1}(u,t,r) -q_1\frac{n}{\mu} \right|= \left| \frac{2\sigma_1l+(\sigma_1-\sigma_2)+2r }{4} \right| &\le& \frac{c(l+2)}{2} \sqrt{n},\quad\mbox{for $n \ge (l-2)^2 / c^2$} \\
\left| b_l(u,t,r) - q_2\frac{n}{\mu} \right| = \left|\frac{\sigma_1+\sigma_2}{2}\right| &\le& c\sqrt{n},\quad\mbox{for $n \ge 1$} \\
\left|b_{l+1}(u,t,r) - q_3\frac{n}{\mu}\right| =\left| -
\frac{2\sigma_1l+(\sigma_2-\sigma_1)+2r }{4}\right| &\le&
\frac{c(l+1)}{2}\sqrt{n},\quad\mbox{for $n \ge (l-2)^2 / c^2$.}
\end{eqnarray*}
Thus, all the above three upper bounds hold for
$n_1^2:=\max\{1,(l-2) / c\}$. In order to obtain \eqref{ulboundsbk},
it is enough to take $n_1=\max\{ n_1^1,n_1^2\}$ and the universal
constant $\alpha:=c(l+2)/2$ which does not depend on $(u,t,r)$.
Without loss of generality, we can take $n_1>n_0$. \eop


\paragraph{Proof of Lemma \ref{quotientTZR}.}
Let $n>n_1$ and  consider $(u,v) \in \mathcal{S}_n \cap
(\mathcal{U}_n\times \mathcal{V}_n)$. Let
\[(b_{l-1}(u,v),b_l(u,v),b_{l+1}(u,v))\]
be the solution of
\eqref{linearTZ}. We have already seen that
\begin{eqnarray}
b_{l-1}(u+4,v) &=& b_{l-1}(u,v)-1 \nonumber \\
b_l(u+4,v) &=& b_l(u,v)+2 \nonumber \\
b_{l+1}(u+4,v) &=& b_{l+1}(u,v)-1. \nonumber
\end{eqnarray}
Therefore, by using these relations and \eqref{TZRdist} (formula of
the multinomial distribution for $(U,T,R)$) we get:
\begin{equation}\label{cuocient}
\frac{P(U_{(v)}=u+4|U_{(v)}\in \U)}{P(U_{(v)}=u|U_{(v)}\in \U)} =
\frac{b_{l-1}(u,v) b_{l+1}(u,v)}{(b_l(u,v)+1)(b_l(u,v)+2)}
\cdot\frac{q_2^2}{q_1 q_3}.
\end{equation}
By using Lemma \ref{RMpossible}, there exist an universal constant
$\alpha>0$  such that:
{\small \begin{eqnarray}
\frac{b_{l-1}(u,v) b_{l+1}(u,v)}{(b_l(u,v)+1)(b_l(u,v)+2)} \cdot \frac{q_2^2}{q_1q_3} &\ge&\frac{\left( q_1\frac{n}{\mu}-\alpha\sqrt{n}\right)\left( q_3\frac{n}{\mu}-\alpha\sqrt{n}\right)}{\left( 1+q_2\frac{n}{\mu}+\alpha\sqrt{n}\right)\left( 2+q_2\frac{n}{\mu}+\alpha\sqrt{n}\right)}\cdot \frac{q_2^2}{q_1q_3} \label{masterineA} \\
\frac{b_{l-1}(u,v) b_{l+1}(u,v)}{(b_l(u,v)+1)(b_l(u,v)+2)} \cdot
\frac{q_2^2}{q_1q_3} &\le& \frac{\left(
q_1\frac{n}{\mu}+\alpha\sqrt{n}\right)\left(
q_3\frac{n}{\mu}+\alpha\sqrt{n}\right)}{\left(
1+q_2\frac{n}{\mu}-\alpha\sqrt{n}\right)\left(
2+q_2\frac{n}{\mu}-\alpha\sqrt{n}\right)}\cdot
\frac{q_2^2}{q_1q_3}\label{masterineB}
\end{eqnarray}}
for every $n>n_1$.  Recall the inequalities (\ref{log}). Let us
start by looking at \eqref{masterineA}:
{\small \begin{eqnarray}
\frac{\left( q_1\frac{n}{\mu}-\alpha\sqrt{n}\right)\left( q_3\frac{n}{\mu}-\alpha\sqrt{n}\right)}{\left( 1+q_2\frac{n}{\mu}+\alpha\sqrt{n}\right)\left( 2+q_2\frac{n}{\mu}+\alpha\sqrt{n}\right)}\cdot \frac{q_2^2}{q_1q_3} &\ge& \frac{\left( 1-\frac{\mu\alpha/q_1}{\sqrt{n}}\right)\left( 1-\frac{\mu\alpha/q_3}{\sqrt{n}}\right)}{\left( 1+\frac{2\mu\alpha/q_2}{\sqrt{n}}\right)^2} \nonumber\\
&=& \exp\left[ \ln\left( 1-\frac{\mu\alpha/q_1}{\sqrt{n}}\right)+\ln\left( 1-\frac{\mu\alpha/q_3}{\sqrt{n}}\right)\right. \nonumber\\
&&\left.-2\ln\left(1+\frac{2\mu\alpha/q_2}{\sqrt{n}}\right)\right] \nonumber\\
\left(\mbox{from \eqref{log} for every $n>\max\{ \frac{\mu^2\alpha^2}{q_1^2},\frac{\mu^2\alpha^2}{q_3^2}\}$}\right)& \ge& \exp\left[ -\frac{\mu\alpha}{2\sqrt{n}}\left( \frac{3}{q_1} + \frac{8}{q_2}+\frac{3}{q_3}\right)\right]\nonumber \\
&\ge&1-\frac{\mu\alpha}{2}\left( \frac{3}{q_1} +
\frac{8}{q_2}+\frac{3}{q_3}\right) \frac{1}{\sqrt{n}}.\label{ine1}
\end{eqnarray}}
Next, let us fix an arbitrary $\epsilon>0$. Then, there exists
$n_{2,1} < \infty$ such that the rest of the Taylor's expansion of
the function $f(x)=e^{-x}$ at $\xi:=\frac{\mu\alpha}{\sqrt{n}}\left(
\frac{1}{q_1}+\frac{6}{q_2}+\frac{1}{q_3}\right)$ satisfies:
\begin{equation}\label{restTaylor}
R(\xi):=\left|\frac{f''(\xi)}{2}\right|\xi^2=
\frac{\mu^2\alpha^2}{n}\left(\frac{1}{q_1}+\frac{6}{q_2}+\frac{1}{q_3}\right)^2\exp\left(-\frac{\mu\alpha}{\sqrt{n}}\left(
\frac{1}{q_1}+\frac{6}{q_2}+\frac{1}{q_3}\right)\right) \le \epsilon
\end{equation}
for every $n > n_{2,1}$. Note that $n_{2,1}$ does not depend on
$(u,v)$ but only on known fixed constants. Now, let us look at
\eqref{masterineB}: {\small \begin{eqnarray}
\frac{\left( q_1\frac{n}{\mu}+\alpha\sqrt{n}\right)\left( q_3\frac{n}{\mu}+\alpha\sqrt{n}\right)}{\left( 1+q_2\frac{n}{\mu}-\alpha\sqrt{n}\right)\left( 2+q_2\frac{n}{\mu}-\alpha\sqrt{n}\right)}\cdot \frac{q_2^2}{q_1q_3} &\le& \frac{\left( 1+\frac{\mu\alpha/q_1}{\sqrt{n}}\right)\left( 1+\frac{\mu\alpha/q_3}{\sqrt{n}}\right)}{\left( 1-\frac{2\mu\alpha/q_2}{\sqrt{n}}\right)^2} \nonumber\\
&=& \exp\left[ \ln\left(1+\frac{\mu\alpha/q_1}{\sqrt{n}}\right)+\ln\left(1+\frac{\mu\alpha/q_3}{\sqrt{n}}\right) \right.\nonumber\\
&&\left.-2\ln\left(1-\frac{2\mu\alpha/q_2}{\sqrt{n}}\right)\right] \nonumber\\
\left(\mbox{from \eqref{log} for every $n>\frac{16\mu^2\alpha^2}{q_2^2}$}\right) &\le&\exp\left[\frac{\mu\alpha}{\sqrt{n}}\left( \frac{1}{q_1}+\frac{6}{q_2}+\frac{1}{q_3}\right)\right] \nonumber\\
&\le&  1+ \mu\alpha\left( \frac{1}{q_1}+\frac{6}{q_2}+\frac{1}{q_3}\right)\frac{1}{\sqrt{n}}+|R(\xi)| \nonumber \\
\mbox{from \eqref{restTaylor}}&\le& 1+ \mu\alpha\left(
\frac{1}{q_1}+\frac{6}{q_2}+\frac{1}{q_3}\right)\frac{1}{\sqrt{n}}+\epsilon
\label{ine2}
\end{eqnarray}}
for every
$n>n_{2,2}:=\max\{n_{2,1},\frac{16\mu^2\alpha^2}{q_2^2}\}$. Finally,
from \eqref{ine1} and \eqref{ine2} we have:
\[ 1-\frac{\mu\alpha}{2}\left( \frac{3}{q_1} + \frac{8}{q_2}+\frac{3}{q_3}\right) \frac{1}{\sqrt{n}} \le
\frac{P(U_{(v)}=u+4|U_{(v)}\in \U)}{P(U_{(v)}=u|U_{(v)}\in \U)} \le
1+ \mu\alpha\left(
\frac{1}{q_1}+\frac{6}{q_2}+\frac{1}{q_3}\right)\frac{1}{\sqrt{n}}+\epsilon\
\] for any arbitrary $\epsilon>0$ .
>From this last inequality, we can find a constant $K>0$ not
depending on $n$ neither on $(u,v)$ and $n_2$ bigger than $n_{2,2}$
and $n_1$ such that \eqref{BinScope} holds for every $n>n_2$.\eop
\paragraph{Proof of Lemma \ref{lemmaa}.}
The proof is based on the  Corollary \ref{multiclt}. Define
\begin{equation}\label{beta} \beta:=(\alpha+c)\sqrt{2\mu},
\end{equation}
 where $\alpha$ is as in
(\ref{ulboundsbk}) and choose $n_4>n_3$ so big that simultaneously
${n_4\over \mu}- c \sqrt{n}>m_o(\beta)$ and $\sqrt{n_4}\geq 2c\mu$.
Here $m_o(\beta)$ is as in Corollary \ref{multiclt}. From these
inequalities, it follows that  whenever $n>n_4$, then
\begin{equation}\label{n4}
{n\over \mu}-c\sqrt{n}\geq \max\big\{{1\over 2 \mu}n,m_o\big\}.
\end{equation}
Take now $n>n_4$ and $(u,t,r) \in \mathcal{S}_n \cap (\U \times
\V)$. By (\ref{n4}),
\begin{equation}\label{n42}
t\geq {n\over \mu}-c \sqrt{n}>m_o(\beta),\quad 2\mu t\geq
n.\end{equation} Use now (\ref{ulboundsbk}) and the definition of $\V$ to see  that for every $i=1,2,3$,
\begin{align*}
|b_{l_i}-t q_i|\leq |b_{l_i}- q_i\big({n\over \mu}\big)|+ q_i|{n\over \mu}-t|\leq \alpha \sqrt{n}+q_ic\sqrt{n}\leq (\alpha +c)\sqrt{n}\leq (\alpha+c)\sqrt{2\mu}\sqrt{t},
\end{align*}
where the last inequality follows from (\ref{n42}) and
$b_{l_i}=b_{l_i}(u,t,r)$. Apply (\ref{multi}) with $\beta$ as in
(\ref{beta}), $m=t$, $p_1=q_1$, $p_2=q_2$, $p_3=q_3$ and
$i=b_{l_1}$, $j=b_{l_2}$. Then by (\ref{TZRdist})
\begin{equation}\label{multi2}
P(U=u,T=t,R=r) ={ t \choose b_{l_1}\;b_{l_2}\;b_{l_3}}q_1^{b_{l_1}}q_2^{b_{l_2}}q_3^{b_{l_3}}p(r)\geq {p(r)\over b(\beta)n}\geq {q_3\over b(\beta)n},
\end{equation}
where the last inequality comes from the fact that $p(r)\geq q_3$.
Thus Lemma \ref{lemmaa} is proven with
$$a={b(\beta)\over q_3}.$$\eop

\newpage
\bibliographystyle{alpha}

\end{document}